\newtheorem{thm}[equation]{Theorem}
\newtheorem{lem}[equation]{Lemma}
\newtheorem{cor}[equation]{Corollary}
\newtheorem{prop}[equation]{Proposition}
\newtheorem{conj}[equation]{Conjecture}
\newtheorem{heur}[equation]{Heuristic}
\newtheorem*{conj235}{Conjecture \ref{conj235}}
\newtheorem*{questionA}{Question A}
\newtheorem*{thmB}{Theorem B}
\theoremstyle{definition}
\newtheorem{rem}[equation]{Remark}
\newtheorem{defn}[equation]{Definition}
\newtheorem*{defn1}{Definition}
\newtheorem{exas}[equation]{Examples}
\numberwithin{equation}{section}
\def\Z{\mathbb{Z}}
\def\Q{\mathbb{Q}}
\def\pp{\mathbb{P}}
\def\R{\mathbb{R}}
\def\C{\mathbb{C}}
\def\bH{\mathbf{H}}
\def\P{\mathcal{P}}
\def\E{\mathcal{E}}
\def\cL{\mathcal{L}}
\def\X{\mathcal{C}}
\def\D{\mathcal{D}}
\def\cC{\mathcal{C}}
\def\Hom{\mathrm{Hom}}
\def\Gal{\mathrm{Gal}}
\def\rk{\mathrm{rank}}
\def\Prob{\mathrm{{\P}}}
\def\ord{\mathrm{ord}}
\def\End{\mathrm{End}}
\def\Sym{\mathrm{Sym}}
\def\cond{\mathrm{cond}}
\def\Var{\mathrm{Var}}
\def\FF{\tilde{F}}
\def\SSS{\tilde{S}}
\def\hookto{\hookrightarrow}
\def\onto{\twoheadrightarrow}
\def\ms#1{\{ #1 \}}
\def\bmu{\boldsymbol{\mu}}
\def\pile#1#2{\genfrac{}{}{0pt}{1}{#1}{#2}}
\title[Arithmetic conjectures inspired by modular symbols]{
Arithmetic conjectures suggested by the statistical behavior of modular symbols}
\author{Barry Mazur}
\address{Department of Mathematics, 
Harvard University,
Cambridge, MA 02138, 
USA}
\email{\href{mailto:mazur@math.harvard.edu}{mazur@math.harvard.edu}}
\author{Karl Rubin}
\address{Department of Mathematics, 
UC Irvine,
Irvine, CA 92697, 
USA}
\email{\href{mailto:krubin@uci.edu}{krubin@uci.edu}}
\subjclass[2010]{Primary: 11G05, Secondary: 11F67, 11G40, 14G05}
\thanks{This material is based upon work supported by the 
National Science Foundation under grants DMS-1302409 and DMS-1500316.}
\begin{document}

\begin{abstract}
Suppose $E$ is an elliptic curve over $\mathbb{Q}$ and $\chi$ is a Dirichlet character.
We use statistical properties of modular symbols to estimate heuristically the probability 
that $L(E,\chi,1) = 0$.  Via the Birch and Swinnerton-Dyer conjecture, this gives a 
heuristic estimate of the probability that the Mordell--Weil rank grows in abelian extensions  
of $\mathbb{Q}$. Using this heuristic we find a large class 
of infinite abelian extensions $F$ where we expect $E(F)$ to be finitely generated.

Our work was inspired by earlier conjectures (based on random matrix heuristics) 
due to David, Fearnley, and Kisilevsky. Where our predictions and theirs overlap, 
the predictions are consistent.
\end{abstract}

\maketitle

\tableofcontents

\part*{Introduction}
\label{int}

By a {\em number field} we will mean a field of finite degree over $\Q$, and any field 
denoted ``$K$" below will be assumed to be a number field.

\begin{defn1}  A variety $V$ over $K$ is said to be 
{\em diophantine stable for the extension $L/K$} if  $V(L)=V(K)$; i.e., 
if $V$ acquires no ``new'' rational points when extended from $K$ to $L$. Depending 
on emphasis intended, we will also sometimes say {\em $L/K$ is diophantine stable for $V$.} 
\end{defn1}

The ``minimalist philosophy'' leads us to the following question.

\begin{questionA}
\label{min} 
If $A$ is an abelian variety over $K$, $\ell$ is an odd prime number, and 
$m$ is a positive integer, is it the case that the cyclic Galois extensions 
of $K$ of degree $\ell^m$ that are diophantine stable for $A$ are of 
density $1$  (among all cyclic Galois extensions of $K$ of degree $\ell^m$, 
ordered by conductor)?
\end{questionA}  

The following result of \cite{MR} is an embarrassingly weak theorem in 
the direction of answering this question.

\begin{thmB} 
If $A/K$ is a simple abelian variety such that $\End_{\bar K}(A) = \End_{K}(A)$, 
then there is a set $S$ of prime numbers of positive density such that for all 
$\ell \in S$ and for all positive integers $m$ there are infinitely many cyclic 
Galois extensions of $K$ of degree $\ell^m$ that are diophantine stable for $A$.
\end{thmB}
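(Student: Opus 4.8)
The plan is to reduce Theorem B to a rank bound for Selmer groups, and then to construct the cyclic extensions by choosing auxiliary ramified primes via a Chebotarev argument in the style of the Kolyvagin-system machinery. First I would fix the set $S$. Let $T_\ell(A)$ be the $\ell$-adic Tate module and $\bar\rho_\ell\colon G_K\to\Aut(A[\ell])$ the residual representation. Since $A$ is simple with $\End_{\bar K}(A)=\End_K(A)$, the image of $G_K$ acting on the Tate modules is as large as the endomorphism algebra permits (it is known to be ``large enough'' for a set $S$ of primes $\ell$ of positive density — one cannot in general do better unconditionally, because extra endomorphisms or arithmetic of $K$ can spoil infinitely many $\ell$, but never a positive-density set). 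For $\ell\in S$ one then has the standard large-image package: $A[\ell]$ is an absolutely irreducible $G_K$-module, $H^1(K(A[\ell])/K,\,A[\ell])=0$, there is $\tau\in G_K$ with $A[\ell]/(\tau-1)A[\ell]$ free of rank one over the relevant coefficient ring, $\ell$ is unramified in $K$ and prime to the bad primes of $A$ and to its torsion, and $A$ has good reduction above $\ell$. One also reduces the claim $A(L)=A(K)$ to the equality of ranks $\rk_\Z A(L)=\rk_\Z A(K)$: if $L/K$ is ramified only at primes of good reduction for $A$, then, using injectivity of reduction on prime-to-residue-characteristic torsion and the fact that the order of the image of $\bar\rho_\ell$ is divisible by primes $\neq\ell$, no new torsion appears.

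Next, fix $\ell\in S$ and $m\geq 1$, and write $\Delta=\Z/\ell^m\Z$. A cyclic degree-$\ell^m$ extension $L/K$ makes $A(L)\otimes\Q$ a $\Q[\Delta]$-module, and the ``new rank'' $\rk_\Z A(L)-\rk_\Z A(K)$ is detected by the $\ell$-adic Selmer groups of the twists of $A$ by the nontrivial characters of $\Delta$. The key input is the Selmer-control theorem furnished by the theory of Kolyvagin systems: under the large-image package for $\ell$, for every $n$ there is a squarefree ideal $\tau=\q_1\cdots\q_k$ of $K$, with each $\q_i$ a prime of good reduction prime to $\ell$ whose Frobenius in a suitable finite extension of $K$ lies in a prescribed conjugacy class, such that the $\tau$-modified Selmer group $\Sel^\tau(A/K)$ vanishes. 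Moreover each $\q_i$ may be taken with residue field of size $\equiv 1\pmod{\ell^m}$, so that $K_{\q_i}$ admits a totally ramified $\Z/\ell^m$-extension; controlling the $\ell$-part of the relevant ray class group of $K$, one then obtains a cyclic degree-$\ell^m$ extension $L/K$ ramified exactly at the chosen set of primes $\q_i$.

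The last step is the cohomological bookkeeping. Using the inflation–restriction sequence for $A(L)^{\Delta}$ and Poitou–Tate global duality, the part of $\Sel_{\ell^\infty}(A/L)$ on which $\Delta$ acts nontrivially is bounded by local contributions concentrated at the ramified primes $\q_i$; and the Chebotarev choice of $\Frob_{\q_i}$ was made precisely to annihilate those contributions. Hence the new Selmer corank is $0$, so $\rk_\Z A(L)=\rk_\Z A(K)$ and therefore $A(L)=A(K)$. Rerunning the Chebotarev argument produces ever new admissible primes, hence infinitely many such $L$ (they ramify at distinct sets of primes), which completes the proof.

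I expect the main obstacle to be the Selmer-control step: showing that a suitable choice of ramified primes $\q_i$ forces the twisted Selmer groups to vanish. This is exactly where the large-image hypotheses are essential, and hence where the restriction of $\ell$ to a positive-density set $S$ and the hypothesis $\End_{\bar K}(A)=\End_K(A)$ enter: without control on the image of $\bar\rho_\ell$ one cannot run the Chebotarev argument producing Kolyvagin-type primes, and extra endomorphisms would force systematic growth of Mordell--Weil rank in $\ell$-power extensions. The ancillary points — existence of a cyclic extension ramified at prescribed primes, absence of new torsion, and distinctness of the infinitely many $L$ — are comparatively routine.
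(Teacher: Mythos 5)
Theorem B is stated here as a quotation from \cite{MR} (it is not proved in this paper), and your sketch follows essentially the same route as the proof given there: reduce diophantine stability to non-growth of rank plus non-growth of torsion, detect rank growth by Selmer groups attached to order-$\ell^m$ twists, and kill those Selmer groups by a Chebotarev-style choice of the ramified primes, with the large mod-$\ell$ image needed for that argument supplied for a positive-density set of $\ell$ by Larsen's appendix to \cite{MR}. Since your proposal is, at the level of a sketch, the same strategy (the Selmer-structure/prime-selection formalism you call ``Kolyvagin-system machinery'' is exactly the framework Mazur and Rubin use, rather than Euler-system classes), there is nothing substantive to correct.
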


Unfortunately we cannot replace the phrase ``infinitely many'' in the statement 
of Theorem B by ``a positive proportion'' (where ``proportion'' is 
defined by organizing these cyclic extensions by size of conductor). 
If we order the extensions that way and consider what we have proved for a 
given $\ell^m$, among the first $X$ of these we get at least $X/\log^{\alpha}X$ 
as $X\to \infty$ (for a small, but positive, $\alpha$).

Note that, for example, when $K/{\Q}$ is quadratic, the cyclic $\ell^m$ extensions 
of $K$ that are Galois dihedral extension of $\Q$  are potentially  a source of 
systematic diophantine instability but are of density $0$ in all cyclic 
$\ell^m$ extensions of $K$.

The aim of this paper is to 
study the distributions of values of modular symbols and of what we call 
{\em $\theta$-coefficients}, in order to develop a heuristic prediction of the 
probability of diophantine stability (or instability) in the specific case 
where $V=E$ is an elliptic curve, $K = \Q$, and $L/\Q$ is abelian.

For example, our heuristic leads to the following conjecture.

\begin{conj235}
Let $E$ be an elliptic curve over $\Q$ and $F/\Q$ any real abelian extension 
such that $F$ contains only finitely many subfields of degree $2, 3$ or $5$ over $\Q$. 
Then the group of $F$-rational points $E(F)$ is finitely generated.
\end{conj235}

For example, we can take the field $F$ in Conjecture \ref{conj235} to be 
the cyclotomic $\Z_\ell$-extension for any prime $\ell$, or the compositum of 
these $\Z_\ell$-extensions for all $\ell$, or the maximal abelian $\ell$-extension for any 
$\ell \ge 7$, or the compositum of all such extensions. 

Question A was inspired by conjectures of 
David, Fearnley and Kisi\-levsky \cite{DFK}, who deal specifically with the case of 
elliptic curves over ${\Q}$. They  conjecture (cf.\ Conjecture \ref{dfkconj} below) 
that for a fixed elliptic curve $E$ over $\Q$ and a fixed prime 
$\ell \ge 7$, there are only finitely many Dirichlet characters of order 
$\ell$ for which $L(E,\chi,1)=0$.    Consequently, (assuming the 
Birch--Swinnerton-Dyer Conjecture) only finitely many cyclic extensions of 
$\Q$ of order $\ell$ are diophantine unstable  for a fixed elliptic curve 
$E$ over ${\Q}$.    
Their conjectures are based on computations and random matrix heuristics.
 
In this paper we will consider a more naive heuristic,
that leads us to make conjectures first regarding the distributions 
of values of $\theta$-coefficients (see Sections \ref{theta} and \ref{dists}, 
and  Conjecture \ref{newspiky}).
These conjectures, supported by numerical evidence, lead us to Conjecture \ref{conj235}.

In the first part of the paper we introduce modular symbols, recall the 
properties we need, and define the $\theta$-coefficients 
(which are sums of modular symbols).
We use the known distribution properties of modular symbols, along with numerical calculations, 
to make conjectures about the (more mysterious) distributions of $\theta$-coefficients. 

We say ``more mysterious"  because (as suggested by random matrix heuristics)  
there may well be no useful limiting distribution, no matter how one normalizes 
the $\theta$-coefficients. Nevertheless, our heuristic needs only quite weak estimates, 
that we label ``upper bounds for heuristic likelihood."  These estimates seem to be in 
accord with the computations that we have made---and (it seems to us) by a comfortable margin.

In the second part of the paper we use these conjectures about $\theta$-coefficients 
to develop a heuristic for the probability of vanishing of twisted $L$-values 
$L(E,\chi,1)$ where $\chi$ is a Dirichlet character of order at least $3$.

\subsection*{Acknowledgements}
We would like to thank Jon Keating and Asbj{\o}rn Nordencroft for helpful 
conversations about the random matrix theory conjectures.

\part{Modular symbols}
   
\section{Basic properties of modular symbols}

Fix once and for all an elliptic curve $E$ defined over $\Q$.  
Since $E$ is fixed we will usually suppress it from the notation.  Let 
$N$ be the conductor of $E$, 
$$
\phi_E: \bar{\bH} \to  X_0(N)(\C) \to  E(\C)
$$ 
the (optimal) modular uniformization of $E$ by $\bar{\bH} := \bH \cup \pp^1(\Q)$, 
the completion of the upper half-plane $\bH := \{z \in \C \,|\, \mathrm{Im}(z) >0\}$,
$$
f_E(z) = \sum_n a_n e^{2\pi i n z}
$$ 
the newform associated to $E$, and
$$
\Omega^\pm_E = \Omega^\pm = \int_{\gamma^\pm}\omega_E
$$  
the real and imaginary periods of $E$, where  
$\omega_E$ is the N{\'e}ron differential of (the  N{\'e}ron model of) $E$, 
$\gamma^+$ is the appropriately oriented connected component of the real locus 
of $E$, and  $\gamma^-$   is, similarly, the appropriately oriented cycle in 
$E(\C)$ stabilized, but not fixed, by complex conjugation.

Note that we have two maps (one with ``$+$'', one with ``$-$'')
that we can evaluate on closed curves $\gamma$ in $E(\C)$, namely 
$$
\gamma \mapsto \frac{1}{\Omega^\pm}\biggl(\int_\gamma\omega_E 
   \pm \int_{-\gamma}\omega_E\biggr) \quad \in \Z
$$  
These induce linear maps
$$
H_1(E({\bf C}); \Z) \to \Z.
$$

The relationship of $\omega_E$ to $f_E$ is given  (cf.\ \cite{A-R-S}) by 
$$
\phi_E^*\omega_E = c_E\cdot  2\pi if_E(z)dz,
$$ 
where $c_E$ is a nonzero integer (``Manin's Constant").

For every $r \in \pp^1(\Q)$ the image of the ``vertical line" in 
the upper half-plane 
$$
\{z= r+iy \;|\; 0\le y \le \infty\}  \subset \bar {\bH}
$$  
in $E$ is an oriented compact curve that ``begins'' at $i\infty$ and terminates 
at some cusp in $E$ (and any cusp is of finite order by the Manin--Drinfeld Theorem).

\begin{defn}
For every $r \in \Q$ define the (raw) modular symbols 
$$
\ms{r} := 2 \pi i \int_{i \infty}^r f_E(z) dz \in \C
$$
and the plus/minus normalized modular symbols
$$
[r]^\pm := \frac{\ms{r} \pm \ms{-r}}{2\Omega^\pm}.
$$
We will also write simply $\Omega$ and $[r]$ for $\Omega^+$ and $[r]^+$, respectively.
\end{defn}

The modular symbols have the following well-known properties.

\begin{lem}
\label{manybullets}  
For every $r \in \Q$ we have: 
\begin{enumerate}
\item 
$[r]^\pm \in \delta_E^{-1}{\Z}$ for some positive integer $\delta_E$ independent of $r$
\item
$[r+1]^\pm = [r]^\pm =\pm [-r]^\pm$
\item  
If $A \in \Gamma_0(N) \subset {\rm SL}_2(\Z)$ then, viewing $A$ as a linear fractional 
transformation we have
$
[r]^\pm = [A(r)]^\pm - [A(\infty)]^\pm.
$
If further $A$ has a  complex (quadratic) fixed point, then 
$[A(\infty)]^\pm = 0$, and therefore
$$ 
[A(r)]^\pm = [r]^\pm
$$  
for all $r \in \Q \cup \{\infty\}$.

\item
{\bf Atkin--Lehner relation:}
Let $m \ge 1$ and write $N = ef$ where $f := \gcd(m,N)$. 
Assume that $e$ and $f$ are relatively prime, and let $W_e$ be the 
Atkin--Lehner Hecke operator \cite{A-L}.
Denote by   $w_e$  the eigenvalue of   $W_e$ on $f_E$.
If $a, d \in \Z$ and $ade \equiv -1 \pmod {m}$, then
$$
[d/m]^\pm = -w_e\cdot[a/m]^\pm.
$$
\item
{\bf Hecke relations: }
Suppose $\ell$ is a prime, and $a_\ell$ is the $\ell$-th Fourier coefficient of $f_E$. 
Then
\begin{enumerate}
\setcounter{enumi}{4}
\item 
If $\ell \nmid N$, then 
$
a_\ell\cdot  [r]^\pm = [\ell r]^\pm + \sum_{i=0}^{\ell-1} [(r+i)/\ell]^\pm.
$
\item
If $\ell \mid N$, then 
$
a_\ell\cdot  [r]^\pm = \sum_{i=0}^{\ell-1} [(r+i)/\ell]^\pm.
$
\end{enumerate}
\end{enumerate}
\end{lem}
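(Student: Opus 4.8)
The plan is to derive each property from the defining integral $\ms{r} = 2\pi i\int_{i\infty}^r f_E(z)\,dz$ together with the modular transformation behavior of $f_E(z)\,dz$ under $\Gamma_0(N)$ and the Atkin--Lehner and Hecke operators, reducing everything to the corresponding statements for the raw symbols $\ms{r}$ and then taking the appropriate $\pm$-combinations. First I would establish a path-additivity formula: for $r,s\in\pp^1(\Q)$, writing $\{r,s\} := 2\pi i\int_s^r f_E(z)\,dz$, one has $\{r,s\}+\{s,t\} = \{r,t\}$ and $\ms{r} = \{r,i\infty\}$; this is immediate from additivity of line integrals along the geodesic triangle (the integrals converge since $f_E$ is a cusp form, so $f_E(z)\,dz$ extends to a holomorphic differential on $X_0(N)$ vanishing at the cusps). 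For (i), I would recall the standard fact (Manin) that the homology classes $\{g(0),g(i\infty)\}$ for $g\in\Gamma_0(N)$ generate $H_1(X_0(N);\Z)$, whose image in $H_1(E(\C);\Z)$ under $\phi_E$ is a lattice; pairing against $\omega_E^\pm$ and dividing by $\Omega^\pm$ lands in $\frac1{\delta_E}\Z$ for a single denominator $\delta_E$ (the index related to Manin's constant and the modular-degree), independently of $r$ because every $\ms{r}$ is a $\Z$-combination of such generators via (iii).

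For (ii): $\ms{r+1}=\ms{r}$ follows from the $q$-expansion invariance $f_E(z+1)=f_E(z)$ and a contour shift; then $[r+1]^\pm=[r]^\pm$ is immediate, and $[-r]^\pm = \dfrac{\ms{-r}\pm\ms{r}}{2\Omega^\pm} = \pm[r]^\pm$ directly from the definition. For (iii): for $A=\begin{pmatrix}a&b\\c&d\end{pmatrix}\in\Gamma_0(N)$ the automorphy $f_E(Az)\,d(Az) = f_E(z)\,dz$ plus the change of variables $z\mapsto Az$ in the integral gives $\{A(r),A(s)\} = \{r,s\}$ for all $r,s$; taking $s=i\infty$ and using path-additivity, $\ms{A(r)}=\{A(r),i\infty\}=\{A(r),A(i\infty)\}+\{A(i\infty),i\infty\} = \ms{r}\cdot(\text{sign bookkeeping}) + \ms{A(\infty)}$, and passing to the $\pm$-normalization yields $[r]^\pm = [A(r)]^\pm - [A(\infty)]^\pm$; if $A$ fixes a point $z_0\in\bH$ then the one-parameter argument (or: $\{A(\infty),\infty\}$ is then a cycle through a fixed point, hence torsion in the relevant eigenspace, hence its image in $\frac1{\delta_E}\Z$ under the pairing is zero) forces $[A(\infty)]^\pm=0$.

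For (iv), the Atkin--Lehner relation: writing $N=ef$ with $\gcd(e,f)=1$ and $\gcd(m,N)=f$, choose a matrix $\tau = \begin{pmatrix} ae & b \\ Nc & de\end{pmatrix}$ with determinant $e$ (solvable because $ade\equiv -1\pmod m$ can be upgraded, using CRT and $\gcd(e,f)=1$, to a genuine integral matrix realizing $W_e$ near the cusp $a/m$), so that $\tau$ represents the Atkin--Lehner involution and $f_E|W_e = w_e f_E$; tracking where $\tau$ sends the cusps $a/m$ and $i\infty$ and applying the same change-of-variables computation as in (iii) gives $\ms{d/m} = -w_e\,\ms{a/m}$ up to the $\pm$-twist, hence $[d/m]^\pm = -w_e[a/m]^\pm$. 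For (v), the Hecke relations: apply the usual formula $T_\ell = \sum$ over the coset representatives $\begin{pmatrix}\ell&0\\0&1\end{pmatrix}, \begin{pmatrix}1&i\\0&\ell\end{pmatrix}$ ($0\le i<\ell$) for $\ell\nmid N$ (resp.\ omit the first for $\ell\mid N$), use $f_E|T_\ell = a_\ell f_E$, and integrate termwise from $i\infty$ to $r$ using the substitutions $z\mapsto \ell z$ and $z\mapsto (z+i)/\ell$; additivity of the integral and the definition of $\ms{\cdot}$ give $a_\ell\ms{r} = \ms{\ell r} + \sum_{i=0}^{\ell-1}\ms{(r+i)/\ell}$ (resp.\ without the $\ms{\ell r}$ term), and then the $\pm$-normalization passes through formally since it is $\Q$-linear.

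I expect the main obstacle to be (iv): producing the correct \emph{integral} matrix representing $W_e$ on the specific cusp $a/m$ (as opposed to the abstract operator on $X_0(N)$) and verifying it genuinely lies in the right double coset so that the eigenvalue $w_e$ applies, together with the bookkeeping of which cusp maps to which — this is where the hypotheses $\gcd(e,f)=1$ and $ade\equiv -1\pmod m$ are really used, and it requires a short but careful CRT/matrix argument rather than the routine contour manipulations that suffice for the other parts. The normalization constant $\delta_E$ in (i) is also a place where one must be slightly careful to get a bound uniform in $r$, but property (iii) reduces this to finitely many generating symbols, so it is not serious.
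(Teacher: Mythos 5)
Your parts (ii)--(v) are essentially the paper's own argument: (ii) is immediate from the definition and translation invariance, (iii) is the change-of-variables plus fixed-point computation, (iv) is carried out in the paper by exactly the explicit integral matrix you describe (take $c:=-m/f$, $b:=(ade+1)/m$, so $W_e=\left(\begin{smallmatrix}-ae & b\\ cN & de\end{smallmatrix}\right)$ has determinant $e$, sends $\infty\mapsto a/m$ and $d/m\mapsto\infty$), so the obstacle you feared there is only a two-line verification; and (v) is the standard coset computation, which the paper simply calls straightforward.

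The genuine gap is in your argument for (i). The modular symbol $2\pi i\int_{i\infty}^{r}f_E(z)\,dz$ is the integral over a path whose boundary is a \emph{difference of cusps}, i.e.\ a relative cycle in $H_1(X_0(N),\mathrm{cusps};\Z)$, not an absolute cycle. Your claim that ``every $\{r\}$ is a $\Z$-combination of generators of $H_1(X_0(N);\Z)$ via (iii)'' cannot be right: an integral combination of closed cycles has zero boundary, while $\{i\infty,r\}$ does not (also, for $g\in\Gamma_0(N)$ the paths from $g(0)$ to $g(\infty)$ all have the same image in $X_0(N)$, so they do not generate $H_1$; Manin's theorem concerns coset representatives of $\Gamma_0(N)\backslash\mathrm{SL}_2(\Z)$ and generates \emph{relative} homology). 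Pairing a relative class against $\omega_E$ and dividing by periods has no a priori reason to be rational with bounded denominator; the missing ingredient is precisely the Manin--Drinfeld theorem, which the paper invokes: the images in $E$ of the cusps of $X_0(N)$ are torsion, of order dividing some $\lambda_E$, so $\lambda_E$ times the pushed-forward path has trivial boundary in $E$ up to the group law, and $c_E\lambda_E\cdot\{r\}$ (the factor $c_E$ converting $2\pi i f_E\,dz$ to $\phi_E^*\omega_E$) lies in the period lattice. That is why the paper can take $\delta_E$ to be any positive multiple of $c_E\lambda_E$, uniformly in $r$. Your sketch asserts the conclusion (``lands in $\delta_E^{-1}\Z$'') from a homological statement that only applies to closed cycles, so as written part (i) does not go through; inserting the Manin--Drinfeld/torsion-of-cusps step repairs it and recovers the paper's proof.
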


\begin{proof}
For (i), we can take $\delta_E$ to be (any positive multiple of) $c_E\cdot \lambda_E$ 
where $\lambda_E$ is the l.c.m. of the orders of the image of the cusps of $X_0(N)$ in  $E$.
Assertion (ii) follows directly from the definition.   
 
The first part of assertion (iii) is evident. 
For the second part, let $z = A(z)$ be the fixed point  of $A$,
and $ \gamma$ a geodesic  from $\infty$ to $z$. By invariance under $A$ we get  
$$
\int_{\infty}^z f_E(z) dz = \int_{A(\infty)}^{A(z)} f_E(z) dz 
   = \int_{A(\infty)}^{z} f_E(z) dz. 
$$ 
Thus $\int_\infty^{A(\infty)} f_E(z) dz = 0$, and it follows that 
$[A(\infty)]^\pm=[\infty]^\pm = 0.$
   
For (iv), here is a construction of the Atkin--Lehner operator $W_e$.  
Let $f := \gcd(m,N)$ and $e := N/f.$ The $W_e$ operator is given by (any) 
matrix of the following form:
\begin{equation}
\label{we}
W_e :=   \left(\begin{array}{cc}
       -ae & b\\
       cN & de\end{array}\right),
\end{equation}
with $a,b,c,d \in {\bf Z}$  and $\det(W_e) = e.$

Let $c := -m/f$ and $b := (ade+1)/m \in \Z$.  
With these choices the matrix $W_e$ of \eqref{we} has determinant $e$, 
$$
W_e(\infty) = -\frac{ae}{cN} = -\frac{a}{cf} = \frac{a}{m},
$$
and (computing) 
$$
W_e(d/m)= \infty.
$$
Thus $W_e$ takes the path $\{\infty, d/m\}$ to the path $\{a/m, \infty\}$. It follows that 
$[d/m] = -w_e[a/m]$  where $w_e$ is the eigenvalue of $W_e$ acting on $f_E$.  
This is (iv), and the proof of (v) is straightforward.
\end{proof}

\section{Modular symbols and $L$-values}
\label{modsymb1}

\begin{defn}
Suppose $\chi$ is a primitive Dirichlet character of conductor $m$.  
Define the Gauss sum
$$
\tau(\chi) := \sum_{a = 1}^m \chi(a) e^{2\pi i a/m}
$$
and, if $L(E,s) = \sum a_n n^{-s}$, the twisted $L$-function
$$
L(E,\chi,s) := \sum_{n=1}^\infty \chi(n) a_n n^{-s}.
$$
\end{defn}

If $F/\Q$ is a finite abelian extension of conductor $m$, we will identify 
characters of $\Gal(F/\Q)$ with primitive Dirichlet characters of 
conductor dividing $m$ in the usual way.  

\begin{prop}
\label{bsdcor}
If $F/\Q$ is a finite abelian extension, and 
the Birch and Swinnerton-Dyer conjecture holds for $E_{/\Q}$ and $E_{/F}$, then
$$
\rk(E(F)) = \rk(E(\Q)) + \sum_{\pile{\chi : \Gal(F/\Q) \to \C^\times}{\chi\ne 1}}\ord_{s=1}L(E,\chi,s).
$$
\end{prop}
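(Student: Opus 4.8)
The plan is to combine the Birch and Swinnerton-Dyer conjecture, which converts Mordell--Weil ranks into analytic orders of vanishing, with the Artin formalism for $L$-functions, which factors $L(E_{/F},s)$ over the characters of $\Gal(F/\Q)$.

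First I would record the factorization. Fix a prime $\ell$ and let $V_\ell(E)$ denote the $\ell$-adic representation of $G_\Q := \Gal(\bar\Q/\Q)$ attached to $E$, so that $L(E,s)$ is its $L$-function (up to finitely many bad Euler factors, which will not affect anything below). Writing $G_F := \Gal(\bar\Q/F)$, the induced representation $\mathrm{Ind}_{G_F}^{G_\Q}\mathbf{1}$ is (the inflation to $G_\Q$ of) the regular representation of the abelian group $\Gal(F/\Q)$, hence decomposes as $\bigoplus_\chi \chi$ with $\chi$ running over all characters of $\Gal(F/\Q)$, viewed as primitive Dirichlet characters of conductor dividing $m = \cond(F/\Q)$. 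By the projection formula together with the invariance of $L$-functions under induction,
$$
L(E_{/F},s) \;=\; \prod_{\chi\colon \Gal(F/\Q)\to\C^\times} L\bigl(V_\ell(E)\otimes\chi,\,s\bigr).
$$
I would then match each factor with the Dirichlet series $L(E,\chi,s) = \sum_n \chi(n) a_n n^{-s}$ of the Definition above: the Euler factors agree at every prime not dividing $mN$, and at the finitely many remaining primes the two local factors, although possibly different, are each holomorphic and non-vanishing at $s = 1$. Hence $\ord_{s=1} L(V_\ell(E)\otimes\chi, s) = \ord_{s=1} L(E,\chi,s)$ for every $\chi$.

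Taking $\ord_{s=1}$ of the displayed identity then yields
$$
\ord_{s=1} L(E_{/F},s) \;=\; \sum_{\chi} \ord_{s=1} L(E,\chi,s) \;=\; \ord_{s=1} L(E,s) + \sum_{\chi\ne 1} \ord_{s=1} L(E,\chi,s),
$$
where the trivial character contributes $L(E,\mathbf{1},s) = L(E,s) = L(E_{/\Q},s)$. Applying the Birch and Swinnerton-Dyer conjecture for $E_{/F}$ to the left-hand side and for $E_{/\Q}$ to the first summand on the right gives $\rk(E(F)) = \rk(E(\Q)) + \sum_{\chi\ne 1}\ord_{s=1}L(E,\chi,s)$, as claimed.

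The only genuine content is the factorization of $L(E_{/F},s)$ and the comparison of bad Euler factors, and this is where I expect to spend what effort there is, although it is routine. The analytic subtleties --- meromorphic continuation of each $L(E,\chi,s)$ to a neighbourhood of $s=1$, so that the orders of vanishing are even defined --- are already subsumed in the hypothesis that BSD holds for $E_{/\Q}$ and $E_{/F}$: BSD for $E_{/F}$ presupposes that $L(E_{/F},s)$, and hence each of its factors, is defined and analytic near $s=1$. So the bad-prime bookkeeping is really the main, and only mildly annoying, obstacle.
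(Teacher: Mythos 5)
Your proof is correct and follows essentially the same route as the paper, whose entire proof consists of citing the factorization $L(E_{/F},s)=\prod_{\chi}L(E,\chi,s)$ and applying BSD on both sides. Your extra care in comparing the naive twists $\sum_n\chi(n)a_n n^{-s}$ with the factors coming from Artin formalism at the finitely many bad primes (where the local factors are non-vanishing at $s=1$, so orders of vanishing match) is a sensible refinement of the same argument, not a different approach.
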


\begin{proof}
This follows from the identity
$$
L(E_{/F},s) = \prod_{\chi : \Gal(F/\Q) \to \C^\times} L(E,\chi,s).
$$
\end{proof}

\begin{thm}[Birch--Stevens]
\label{BS}
If $\chi$ is a primitive Dirichlet character of conductor $m$, then
$$
\sum_{a=1}^m \chi(a)[a/m]^\epsilon = \frac{\tau(\chi)L(E,\bar\chi,1)}{\Omega^\epsilon}.
$$
where the sign $\epsilon := \chi(-1)$ is the sign of the character $\chi$.
\end{thm}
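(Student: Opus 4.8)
The plan is to reduce the statement to the classical identity expressing a raw modular symbol as an additively twisted $L$-value,
$$
\ms{a/m} = \sum_{n=1}^\infty \frac{a_n}{n}e^{2\pi i na/m},
$$
and then to build in the twist by $\chi$ using Gauss-sum orthogonality. First I would record the standard fact that, for a primitive character $\chi$ of conductor $m$ and every integer $n$, $\sum_{a=1}^m\chi(a)e^{2\pi i na/m} = \bar\chi(n)\tau(\chi)$ --- when $\gcd(n,m)=1$ this is the substitution $a\mapsto an^{-1}$, and when $\gcd(n,m)>1$ both sides vanish by primitivity. Granting the displayed raw identity, I multiply it by $\chi(a)$, sum over $a$, and interchange the finite $a$-sum with the $n$-sum to obtain $\sum_{a=1}^m\chi(a)\ms{a/m} = \tau(\chi)\sum_n\bar\chi(n)a_n/n = \tau(\chi)L(E,\bar\chi,1)$; applying the raw identity to $-a/m$ instead and then sending $a\mapsto m-a$ produces $\sum_{a=1}^m\chi(a)\ms{-a/m} = \chi(-1)\tau(\chi)L(E,\bar\chi,1) = \epsilon\,\tau(\chi)L(E,\bar\chi,1)$. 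Substituting both into the definition $[a/m]^\epsilon = (\ms{a/m}+\epsilon\,\ms{-a/m})/(2\Omega^\epsilon)$ gives $\sum_a\chi(a)[a/m]^\epsilon = \frac{1}{2\Omega^\epsilon}(1+\epsilon^2)\tau(\chi)L(E,\bar\chi,1) = \tau(\chi)L(E,\bar\chi,1)/\Omega^\epsilon$, which is the assertion.

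It remains to establish the raw identity. I would parametrize the vertical line from $a/m$ to $i\infty$ by $z = a/m+it$ with $0\le t<\infty$, so that $\ms{a/m} = 2\pi i\int_{i\infty}^{a/m}f_E(z)\,dz = 2\pi\int_0^\infty f_E(a/m+it)\,dt$, then substitute the $q$-expansion $f_E(a/m+it) = \sum_n a_n e^{2\pi i na/m}e^{-2\pi nt}$ and integrate term by term, using $\int_0^\infty e^{-2\pi nt}\,dt = 1/(2\pi n)$, to recover $\sum_n(a_n/n)e^{2\pi i na/m}$.

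The one genuinely delicate point --- and the step I expect to be the main obstacle --- is justifying this term-by-term integration at the edge value $s=1$, since the series $\sum_n(a_n/n)e^{2\pi i na/m}$ converges only conditionally. The clean route is via the Mellin transform: for $\mathrm{Re}(s)$ large the Dirichlet series $\sum_n a_n e^{2\pi i na/m}n^{-s}$ converges absolutely by the Hasse bound, equals $(2\pi)^s\Gamma(s)^{-1}\int_0^\infty f_E(a/m+it)t^{s-1}\,dt$ by an absolutely convergent interchange, and this integral is entire in $s$ because $f_E$ decays exponentially both as $t\to\infty$ (it is a cusp form) and as $t\to 0^+$ (the rational point $a/m$ is a cusp of $X_0(N)$ at which the cusp form $f_E$ vanishes; conjugating $a/m$ to $\infty$ by an element of $\SL_2(\Z)$ and invoking Atkin--Lehner makes the decay explicit). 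Combined with the known holomorphy of $L(E,\bar\chi,s)$ (from modularity of $E$), evaluation at $s=1$, where $\Gamma(1)=1$, yields the raw identity. One could instead stay at $s=1$ throughout and justify the interchange by Abel summation against the trivial bound $\sum_{k\le n}a_k e^{2\pi i ka/m} = O(n^{1/2+\delta})$, but the Mellin version is preferable because it simultaneously delivers the analytic continuation that makes $L(E,\bar\chi,1)$ meaningful. Everything else --- the Gauss-sum orthogonality, the vertical-line unfolding, and the symmetrization over the two signs --- is routine.
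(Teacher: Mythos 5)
Your argument is correct, and it is essentially the classical Birch--Stevens argument that the paper invokes without proof: unfold the raw symbol $\ms{a/m}$ into the additive twist $\sum_n (a_n/n)e^{2\pi i na/m}$ (with the Mellin-transform continuation handling convergence at $s=1$), apply Gauss-sum orthogonality for the primitive character, and symmetrize over $\pm$ using $\epsilon^2=1$. Since the paper supplies no proof of Theorem \ref{BS}, there is nothing to compare beyond noting that your treatment of the only delicate point (term-by-term integration at the edge of absolute convergence) is handled correctly; one small remark is that moving the cusp $a/m$ to $\infty$ needs only $\SL_2(\Z)$-automorphy and cuspidality of $f_E$, not Atkin--Lehner.
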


\section{$\theta$-elements  and  $\theta$-coefficients}
\label{theta}
\begin{defn}
Suppose $m \ge 1$, and let $\Gamma_m = \Gal(\Q(\bmu_m)/\Q)$.  Identify $\Gamma_m$ with $(\Z/m\Z)^\times$ 
in the usual way, and let $\sigma_{a,m} \in \Gamma_m$ be the Galois automorphism corresponding to 
$a \in (\Z/m\Z)^\times$ (i.e., $\sigma_{a,m}(\zeta)=\zeta^a$ for $\zeta\in\bmu_m$).  
Define
$$
\theta_m^\pm:= \delta_E\sum_{a \in (\Z/m\Z)^\times} [a/m]^\pm\;\sigma_{a,m} \in \Z[\Gamma_m]
$$  
where $\delta_E$ is as in Lemma \ref{manybullets}(i).
  
If $F/\Q$ is a finite abelian extension of conductor $m$, so $F \subset \Q(\bmu_m)$, 
define the {\em $\theta$-element} (over $F$, associated to $E$) to be:
$$
\theta_F^\pm := \theta_m^\pm |_F\  \in\ \Z[\Gal(F/\Q)]
$$  
where  $\theta_m^\pm |_F$ is the image of $\theta_m^\pm$ under the natural 
restriction homomorphism 
$$
\Z[\Gal(\Q(\bmu_m)/\Q)] \to \Z[\Gal(F/\Q)].
$$
\end{defn}

By Lemma \ref{manybullets}(i) we have
\begin{equation}
\label{intval}
\theta_F^\pm = \sum_{\gamma \in \Gal(F/\Q)} c_{F,\gamma}^\pm\cdot  \gamma
  \in \Z[\Gal(F/\Q)]
\end{equation} 
where
$$
c_{F,\gamma}^\pm := \delta_E\sum_{\pile{a \in (\Z/m\Z)^\times}{\sigma_{a,m}|_F = \gamma}} [a/m]^\pm.
$$

We will refer to the $c_{F,\gamma}^\pm\in  \Z$ as {\em $\theta$-coefficients}.
Since we will most often be dealing with the ``plus''-$\theta$-elements, we will 
simplify notation by letting $\theta_F := \theta_F^+$,  $c_{F,\gamma} := c_{F,\gamma}^+$, 
and $\Omega := \Omega^+$.
If $F$ is a real field, then $\sigma_{-1,m}|_F = 1$, and $[a/m]=[-a/m]$, so
\begin{equation}
\label{tc}
c_{F,\gamma} = 2\delta_E\cdot \sum_{\pile{a \in (\Z/m\Z)^\times/\{\pm 1\}}{\sigma_{a,m}|_F = \gamma}} [a/m].
\end{equation}

With this notation, Proposition \ref{BS} can be rephrased as follows:

\begin{cor}
\label{thetaL}
Suppose $F/\Q$ is a finite real cyclic extension of conductor $m$ and 
$\chi :(\Z/m\Z)^\times \onto  \Gal(F/\Q) \hookto \C^\times$ is a character that cuts out $F$.
Then  
$$
{\bar \chi}(\theta_F) = \delta_E \frac{\tau(\bar\chi)L(E,\chi,1)}{\Omega}.
$$ 
In particular ${\bar \chi}(\theta_F)$ vanishes if and only if $L(E,\chi,1)$ does.
\end{cor}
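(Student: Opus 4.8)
The plan is to derive Corollary \ref{thetaL} directly from the Birch--Stevens formula (Theorem \ref{BS}) by applying the character $\bar\chi$ to the definition of $\theta_F$. First I would note that for a character $\bar\chi$ of $\Gal(F/\Q)$, extended to $(\Z/m\Z)^\times$ via the surjection in the statement, applying $\bar\chi$ to the group-ring element $\theta_m^+ = \delta_E\sum_{a\in(\Z/m\Z)^\times}[a/m]\,\sigma_{a,m}$ simply replaces each $\sigma_{a,m}$ by $\bar\chi(a)$, giving $\bar\chi(\theta_m^+) = \delta_E\sum_{a\in(\Z/m\Z)^\times}\bar\chi(a)[a/m]$. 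The key compatibility to check is that $\bar\chi(\theta_F) = \bar\chi(\theta_m^+)$, i.e.\ that applying $\bar\chi$ (as a character of $\Gal(F/\Q)$) after the restriction map $\Z[\Gamma_m]\to\Z[\Gal(F/\Q)]$ agrees with applying $\bar\chi$ (as a character of $\Gamma_m$) directly; this is immediate because the character $\bar\chi$ of $\Gamma_m$ factors through $\Gal(F/\Q)$ precisely because $\chi$ cuts out $F$.

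Next I would invoke Theorem \ref{BS} with the character $\bar\chi$ in place of $\chi$: since $\chi$ cuts out the real field $F$, we have $\chi(-1)=1$, so $\bar\chi(-1)=1$ as well, and the relevant sign is $\epsilon=+$. Theorem \ref{BS} then reads $\sum_{a=1}^m \bar\chi(a)[a/m]^+ = \tau(\bar\chi)L(E,\chi,1)/\Omega$, using that $\overline{(\bar\chi)}=\chi$. (One should check that $\bar\chi$ is primitive of conductor exactly $m$; this holds because $m$ is the conductor of $F$ and $\chi$ cuts out $F$, and a character and its conjugate have the same conductor.) Multiplying through by $\delta_E$ and combining with the computation of the previous paragraph gives $\bar\chi(\theta_F) = \delta_E\,\tau(\bar\chi)L(E,\chi,1)/\Omega$, which is the asserted identity. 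The final sentence is then immediate: $\tau(\bar\chi)\neq 0$ for a primitive character, and $\delta_E$ and $\Omega$ are nonzero, so $\bar\chi(\theta_F)=0$ if and only if $L(E,\chi,1)=0$.

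There is no serious obstacle here; the statement is essentially a bookkeeping rephrasing of Birch--Stevens. The one point requiring a little care is the interplay between $\chi$ and $\bar\chi$ and the matching of signs: the summation in the definition of $\theta_F$ is over $(\Z/m\Z)^\times$ with coefficients $[a/m]^+$, Theorem \ref{BS} is stated with sign $\epsilon=\chi(-1)$, and one must make sure that applying $\bar\chi$ to $\theta_F^+$ lands in the $\epsilon=+$ case of the theorem applied to $\bar\chi$ — which it does exactly because $F$ is real, so every character cutting out $F$ is even. The identity \eqref{tc}, valid for real $F$, can be used to present the sum over $(\Z/m\Z)^\times/\{\pm1\}$ if one prefers, but it is not needed for the proof.
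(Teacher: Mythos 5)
Your proof is correct and follows essentially the same route as the paper, which presents Corollary \ref{thetaL} simply as a rephrasing of the Birch--Stevens formula: apply $\bar\chi$ to the definition of $\theta_F$ (using that $\bar\chi$ factors through $\Gal(F/\Q)$ and is even and primitive of conductor $m$ because $F$ is real of conductor $m$) and invoke Theorem \ref{BS}. The compatibility and sign checks you spell out are exactly the bookkeeping the paper leaves implicit, so there is nothing to add.
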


\section{Distribution of modular symbols}
From now on, we assume that our elliptic curve $E$ is semistable, so its 
conductor $N$ is squarefree.

The following fundamental result about the distribution of modular symbols 
was proved by Petridis and Risager   (cf. (8.6) of  \cite{pr2}).

\begin{defn}
Let $\cC_E := 6/\pi^2 \prod_{\ell \mid N} (1 + \ell^{ -1 })^{ -1} L(\Sym^2(E),1).$  
\end{defn}

\begin{thm}[Petridis \& Risager \cite{pr2}, see also \cite{pr1}]
\label{pr1}
As $X$ goes to infinity the values 
$$
\biggl\{\frac{[a/m]^+}{\sqrt{\log(m)}} : m \le X,a \in (\Z/m\Z)^\times\biggr\}
$$ 
approach a normal distribution with variance $\cC_E$.  
\end{thm}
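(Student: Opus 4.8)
The plan is to establish this as a consequence of the mean-value asymptotics for modular symbols, which in turn come from the analytic behavior of the Eisenstein-twisted $L$-series attached to $f_E$. First I would set up the generating machinery: for a weight-$2$ newform $f_E$, consider the second moment
$$
M_2(X) := \sum_{m \le X}\ \sum_{a \in (\Z/m\Z)^\times} \bigl([a/m]^+\bigr)^2,
$$
and more generally the higher even moments $M_{2k}(X)$ together with the vanishing of the odd moments (which follows from the symmetry $[a/m]^+ = [-a/m]^+$ and the fact that the average of $[a/m]^+$ over $a$ is controlled by Atkin--Lehner and Hecke relations). The key input is that the modular symbol $[a/m]^+$ can be written, via the Birch--Stevens formula (Theorem \ref{BS}) run in reverse over all characters of conductor dividing $m$, in terms of twisted $L$-values; alternatively and more usefully here, one realizes $\sum_a ([a/m]^+)^2$ as a spectral average by expanding the modular symbol as a period integral of $f_E$ against a path and then summing over $a$, which produces an incomplete Eisenstein series. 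This is precisely the Petridis--Risager construction: the automorphic kernel is a perturbation $E(z,s;\chi_{\text{mod }m})$ of the real-analytic Eisenstein series, and the $m$-average of $([a/m]^+)^2$ is governed by the value at $s=1$ of the symmetric-square $L$-function, with the arithmetic factor $6/\pi^2 \prod_{\ell \mid N}(1+\ell^{-1})^{-1}$ emerging from the local computations at the bad primes and from the residue of $\zeta(s)$.

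The main steps, in order, are: (1) express the even moments $M_{2k}(X)$ as multiple integrals of $f_E$-periods and open them up into averages of products of additive characters $e^{2\pi i a j/m}$; (2) carry out the sum over $a \in (\Z/m\Z)^\times$, obtaining Ramanujan-type sums and hence a main term supported on the ``diagonal'' configurations of the inner variables, which for $2k$ symbols contributes the $(2k)!/(2^k k!)$ pairings characteristic of Gaussian moments; (3) evaluate the resulting arithmetic sum over $m \le X$, extracting the leading asymptotic $\cC_E^k \cdot (2k-1)!! \cdot \bigl(\tfrac12 X (\log X)^k\bigr)(1+o(1))$ — here the power $(\log X)^k$ and the constant $\cC_E$ come from the analysis of the Eisenstein series near $s=1$, exactly as in \cite{pr2}; (4) check that $M_{2k+1}(X) = o\bigl(X(\log X)^{k+1/2}\bigr)$, again using the functional symmetry and cancellation in the character sums; (5) conclude by the method of moments: since $(2k-1)!! = \mathbb{E}[Z^{2k}]$ for $Z \sim N(0,1)$, the normalized moments of $[a/m]^+/\sqrt{\log m}$ over the region $m \le X$ converge as $X \to \infty$ to the Gaussian moments $\cC_E^k (2k-1)!!$, and because the normal distribution is determined by its moments, the empirical distributions converge weakly to $N(0,\cC_E)$.

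The hard part will be step (3): controlling the Eisenstein series $E(z,s;\psi)$ — more precisely the relevant Poincaré/incomplete Eisenstein series twisted by the modular symbol — uniformly as one sums over all moduli $m \le X$, and isolating the precise constant $\cC_E = \tfrac{6}{\pi^2}\prod_{\ell\mid N}(1+\ell^{-1})^{-1}L(\Sym^2 E,1)$. This is genuinely the analytic-number-theory core of \cite{pr2}: it requires the meromorphic continuation and polynomial growth bounds for the perturbed Eisenstein series in vertical strips, identification of the residue at $s=1$ with a special value of $L(\Sym^2 E, s)$, and an Euler-product computation at the primes dividing $N$ to pin down the bad-prime factors; the squarefree (semistable) hypothesis on $N$ is what keeps those local factors in the clean shape stated. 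Everything else — the combinatorics of the moment expansion, the Ramanujan-sum evaluations, and the final appeal to the method of moments — is routine once that uniform spectral estimate is in hand. Accordingly, rather than reproving \cite{pr2} I would cite their equation (8.6) for the second-moment asymptotic and the attendant higher-moment statement, and devote the write-up to steps (1), (2), (4) and (5), which assemble their analytic result into the stated distributional convergence.
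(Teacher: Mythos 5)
The paper does not prove this statement at all: Theorem \ref{pr1} is imported verbatim from Petridis--Risager \cite[(8.6)]{pr2} (see also \cite{pr1}), so the only ``proof'' in the paper is the citation itself. Since your plan also defers the analytic core to \cite{pr2}, you and the paper are in the end taking the same route, and the cleanest write-up would simply cite their distributional statement directly rather than re-assemble it by the method of moments. Two caveats about the scaffolding you do spell out. First, the normalization in your step (3) is off: the family $\{(m,a): m\le X,\ a\in(\Z/m\Z)^\times\}$ has $\sum_{m\le X}\varphi(m)\asymp X^2$ elements, so the even-moment asymptotic must have the shape $M_{2k}(X)\sim (2k-1)!!\,\cC_E^{\,k}\bigl(\sum_{m\le X}\varphi(m)\bigr)(\log X)^k$, not $\tfrac12 X(\log X)^k$; as written, your moments would not match the empirical distribution you are normalizing. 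Second, the mechanism in steps (1)--(2) --- opening $[a/m]^+$ into additive characters, Ramanujan sums, and a diagonal-pairing count --- is not how these moments are accessed and would not be ``routine'': $[a/m]^+$ has no finite additive-character expansion, and the character-sum route goes through Birch--Stevens (Theorem \ref{BS}) with multiplicative characters, i.e.\ through moments of twisted $L$-values, which is itself deep (cf.\ \cite{Blomeretal} already for the second moment). In \cite{pr2} the Gaussian factors $(2k-1)!!$ arise instead from the pole and derivative structure at $s=1$ of Eisenstein series twisted by powers of modular symbols, and the uniform control of all moments over $m\le X$ --- including the odd-moment bound of your step (4) --- is precisely the content of their theorem, not an elementary addendum to it. None of this is fatal, because you explicitly cite \cite{pr2} for the core; but the claim that steps (1), (2) and (4) are routine is exactly where a self-contained version of your write-up would stall.
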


Numerical experiments led to the following conjecture.  Denote by $\Var(m)$ 
the variance
$$
\Var(m) := \frac{1}{\varphi(m)}\sum_{a \in (\Z/m\Z)^\times}([a/m]^+)^2
$$

\begin{conj}
\label{slopeshift}
\begin{enumerate}
\item
As $m$ goes to infinity, the distribution of the sets
$$
\biggl\{\frac{[a/m]^+}{\sqrt{\log(m)}} : a \in (\Z/m\Z)^\times\biggr\}
$$
converges to a normal distribution with mean zero and variance $\cC_E$.
\item
For every divisor $\kappa$ of the conductor $N$, there is a constant 
$\D_{E,\kappa} \in \R$ such that 
$$
\lim_{\pile{m \to \infty}{(m,N) = \kappa}}(\Var(m) -\cC_E\log(m)) = \D_{E,\kappa}.
$$
\end{enumerate}
\end{conj}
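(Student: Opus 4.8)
\emph{The plan} is to deduce the conjecture from a ``vertical'' strengthening of Theorem~\ref{pr1} --- one that fixes $m$ and averages only over $a\in(\Z/m\Z)^\times$ --- established by the spectral method of Petridis and Risager. By the method of moments (the normal law being determined by its moments), part~(i) is equivalent to showing that the vertical moments
$$
M_k(m) := \frac{1}{\varphi(m)}\sum_{a\in(\Z/m\Z)^\times}\bigl([a/m]^+\bigr)^k
$$
(so $M_2(m)=\Var(m)$ in the paper's notation) satisfy $M_k(m)=o\bigl((\log m)^{k/2}\bigr)$ for $k$ odd and $M_k(m)/(\log m)^{k/2}\to(k-1)!!\,\cC_E^{k/2}$ for $k$ even, while part~(ii) is exactly the assertion that $M_2(m)-\cC_E\log m$ converges to a limit depending only on $\gcd(m,N)$. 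Thus everything reduces to the asymptotics of $M_k(m)$ for fixed $m$, and for $k=2$ to one order of precision beyond the leading term.

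To analyze $M_k(m)$, first use Lemma~\ref{manybullets}(ii),(iii) to rewrite $[a/m]^+$, up to the rational factor of Lemma~\ref{manybullets}(i) and the period $\Omega$, as the value $\langle\gamma_{a,m}\rangle$ of the (real part of the) modular-symbol homomorphism $\gamma\mapsto 2\pi i\int_{z_0}^{\gamma z_0}f_E(z)\,dz$ on an explicit $\gamma_{a,m}\in\Gamma_0(N)$ --- equivalently a Manin-symbol coset representative --- whose lower-left entry is a divisor-controlled multiple of $m$; the average over $a$ then becomes a sum over bottom rows $(c,d)$ with $c$ tied to $m$. Following \cite{pr2} one introduces the family of Eisenstein series twisted by the character $\gamma\mapsto\exp(\varepsilon\langle\gamma\rangle)$,
$$
E(z,s,\varepsilon) := \sum_{\gamma\in\Gamma_\infty\backslash\Gamma_0(N)}\exp\!\bigl(\varepsilon\,\langle\gamma\rangle\bigr)\,\bigl(\mathrm{Im}\,\gamma z\bigr)^s,
$$
so that $\partial_\varepsilon^k\big|_{\varepsilon=0}$ produces $\sum_\gamma\langle\gamma\rangle^k(\mathrm{Im}\,\gamma z)^s$; its meromorphic continuation in $s$ has a pole at $s=1$, and unwinding the relevant Fourier coefficient (a sum of Kloosterman sums of modulus $\sim m$) converts the principal part at $s=1$ into $M_k(m)$ with a main term of order $(\log m)^{k/2}$. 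The Gaussian constant appears as the residue of a Rankin--Selberg convolution of $f_E$ with itself, which is precisely why $\cC_E$ carries the factor $L(\Sym^2(E),1)$, the Euler factors $\prod_{\ell\mid N}(1+\ell^{-1})^{-1}$, and the $1/\zeta(2)=6/\pi^2$ forced by the coprimality condition $\gcd(a,m)=1$.

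For part~(ii) one carries this out for $k=2$ and reads off the full constant term of the Laurent expansion at $s=1$, not merely the residue. The dependence on $\kappa=\gcd(m,N)$ should enter only through the local factors at the primes $\ell\mid N$: these are exactly the primes where the Hecke relation Lemma~\ref{manybullets}(v)(b) replaces (v)(a) and where the Atkin--Lehner relation Lemma~\ref{manybullets}(iv) is in force, so which of them divide $m$ affects the constant term but not the leading coefficient $\cC_E$. Assembling these $\ell$-adic local contributions would produce the constants $\D_{E,\kappa}$.

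\emph{The hard part} is to run this program in the vertical regime --- for fixed $m$ --- rather than after the extra average over $m\le X$ that underlies Theorem~\ref{pr1}. That average smooths the contribution of the continuous and discrete spectrum of $\Gamma_0(N)\backslash\bH$; removing it forces one to bound sums of Kloosterman sums (equivalently, $\GL_2$ automorphic periods) of conductor $m$ with uniformity and a power-saving error, essentially input of subconvexity strength, and uniformly in the moment order $k$. This is why the statement is only conjectural and why the numerical evidence in the paper stands in for the missing analytic estimates; even part~(ii), which needs only $k=2$, seems to require more uniformity than the present technology delivers.
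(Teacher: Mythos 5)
The statement you are addressing is not proved in the paper at all: it is stated as Conjecture \ref{slopeshift}, supported only by numerical experiments and by the \emph{averaged} theorems of Petridis--Risager (Theorem \ref{pr1} for part (i), Theorem \ref{pr2} for part (ii), which identifies $\D_{E,\kappa}$ after averaging over $m$), together with the prime-modulus case of (ii) in \cite{Blomeretal}. So there is no ``paper proof'' to match; the relevant question is whether your proposal actually closes the gap between the averaged statements and the fixed-modulus statement. It does not. Your reduction of part (i) to vertical moment asymptotics $M_k(m)$ and of part (ii) to the secondary term of $M_2(m)$ is a sensible framing (one quibble: moment convergence \emph{implies} convergence to the normal law, but the two are not equivalent without a uniform-integrability argument, which you would also need to supply if you wanted to argue in the other direction), and the Eisenstein-series-twisted-by-modular-symbols machinery you describe is indeed the mechanism behind Theorem \ref{pr1} and Theorem \ref{pr2}, including the appearance of $L(\Sym^2(E),1)$ in $\cC_E$. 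But the decisive step --- evaluating the relevant Fourier coefficient/Kloosterman-sum contributions at a single fixed modulus $m$, with a power-saving error uniform in $m$ (and in $k$ for part (i)), rather than after the additional average over $m\le X$ that tames the discrete and continuous spectrum --- is precisely the content of the conjecture, and your write-up explicitly concedes that this input is beyond current technology. A proof proposal whose ``hard part'' is declared unavailable is a research program, not a proof.

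Concretely, then, the gap is: nothing in your argument establishes either the fixed-$m$ limit in (i) or the existence of the limits $\D_{E,\kappa}$ in (ii); everything you derive rigorously is already contained in the cited averaged results. If you want to present something provable along these lines, the honest statements are (a) the averaged versions (Theorem \ref{pr1}, Theorem \ref{pr2}), (b) the prime-conductor case of (ii) from \cite{Blomeretal}, and (c) the observation that your moment framework reduces Conjecture \ref{slopeshift} to a conjectural vertical second-moment (and higher-moment) asymptotic with uniformity in $m$. You should also be careful with the claim that the dependence on $\kappa=\gcd(m,N)$ enters only through local factors at $\ell\mid N$: that is plausible and consistent with the formula (8.12) of \cite{pr2}, but it is an expectation, not something your sketch derives.
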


\begin{rem}
We expect that a version of Conjecture \ref{slopeshift} holds also for non-semistable 
elliptic curves, with a slightly more complicated formula for $\cC_E$.
\end{rem}

Note that Theorem \ref{pr1} is an ``averaged'' version of Conjecture \ref{slopeshift}(i).
Inspired by Conjecture \ref{slopeshift}, Petridis and Risager \cite[Theorem 1.6]{pr2} obtained 
the following result, which identifies the constant $\D_{E,\kappa}$ and 
proves an averaged version of Conjecture \ref{slopeshift}(ii).

\begin{thm}[Petridis \& Risager \cite{pr2}]
\label{pr2}
For every divisor $\kappa$ of $N$, there is an explicit 
(see \cite[(8.12)]{pr2}) constant $\D_{E,\kappa} \in \R$ such that 
$$
\lim_{X \to \infty} \frac{1}{\text{\tiny$\displaystyle\sum_{\pile{m < X}{(m,N_E) 
   = \kappa}}$}\varphi(m)}\sum_{\pile{m < X}{(m,N) = \kappa}}\varphi(m)(\Var(m) -\cC_E\log(m)) = \D_{E,\kappa}.
$$
\end{thm}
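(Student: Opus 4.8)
The plan is to realize the numerator of the asserted limit as (the partial sums of the coefficients of) a Dirichlet series which is a Fourier coefficient of a \emph{second-order Eisenstein series} twisted by the modular symbol, and then to extract the asymptotics from the polar behaviour of that Eisenstein series at the edge $s=1$ of its meromorphic continuation. Since $\varphi(m)\Var(m) = \sum_{a\in(\Z/m\Z)^\times}([a/m]^+)^2$, the quantity to understand is
$$
S_\kappa(X) \;:=\; \sum_{\substack{m<X\\(m,N)=\kappa}}\ \sum_{a\in(\Z/m\Z)^\times}\bigl([a/m]^+\bigr)^2 .
$$

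First I would convert from the arithmetic normalization to the hyperbolic one. Writing, for $\gamma\in\Gamma_0(N)$, the period integral $\langle\gamma\rangle := 2\pi i\int_{z_0}^{\gamma z_0} f_E(z)\,dz$ (a $1$-cocycle independent of $z_0$), the values $\Omega^+[a/m]^+$ are, up to the elementary constants $c_E,\delta_E$ of Lemma \ref{manybullets}, precisely the numbers $\langle\gamma\rangle$ with $\gamma$ ranging over suitable double cosets in $\Gamma_\infty\backslash\Gamma_0(N)/\Gamma_\infty$, $m$ being the lower-left entry of $\gamma$; the congruence constraint $(m,N)=\kappa$ becomes a condition on a pair of cusps of $X_0(N)$, those cusps being permuted among themselves by the Atkin--Lehner involutions $W_e$ of Lemma \ref{manybullets}(iv). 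This identifies $S_\kappa(X)$, after a Mellin transform, with a finite $\Q$-linear combination over cusp pairs $(\mathfrak a,\mathfrak b)$ of Dirichlet series $Z_{\mathfrak a\mathfrak b}(s)=\sum_\gamma \langle\gamma\rangle^2|c_\gamma|^{-2s}$, which is exactly the constant Fourier coefficient, expanded at the cusp $\mathfrak b$, of the second-order Eisenstein series
$$
E^{(2)}_{\mathfrak a}(z,s)\;:=\;\sum_{\gamma\in\Gamma_{\mathfrak a}\backslash\Gamma_0(N)}\langle\gamma\rangle^2\,\bigl(\mathrm{Im}\,\sigma_{\mathfrak a}^{-1}\gamma z\bigr)^s
$$
studied (for $\Gamma_0(N)$ and for general cusps) by Goldfeld, O'Sullivan, Jorgenson--Risager and Petridis--Risager.

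Next I would feed in the analytic continuation of $E^{(2)}_{\mathfrak a}(z,s)$ past $\mathrm{Re}(s)=1$ together with the description of its singularity at $s=1$. This is where the spectral/perturbation machinery enters: deforming $\Gamma_0(N)$ in the direction of the modular symbol and applying second-order perturbation theory for the simple, isolated eigenvalue $0$ of the hyperbolic Laplacian on $\Gamma_0(N)\backslash\bH$ (Phillips--Sarnak, Petridis), one shows that $Z_{\mathfrak a\mathfrak b}(s)$, after the elementary $\zeta$- and $\varphi$-factors that produce the weight $\varphi(m)$ are divided out, has a double pole at $s=1$: the leading Laurent coefficient is a multiple of the squared $L^2$-pairing of $f_E$ against the residual Eisenstein spectrum — which is precisely where the factor $L(\Sym^2(E),1)$ and the Euler factors in the definition of $\cC_E$ come from, so this coefficient is $\cC_E$ up to explicit normalizations — while the \emph{subleading} coefficient, built from the regularized inner products and the constant terms of the perturbed Eisenstein series at the cusps, is the explicit constant $\D_{E,\kappa}$ of \cite[(8.12)]{pr2}. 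The whole $\kappa$-dependence is carried by which cusp pairs contribute and by the local factors at the primes dividing $N/\kappa$.

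Finally I would run a Tauberian / Perron-type argument: using standard polynomial growth bounds for $E^{(2)}_{\mathfrak a}(z,s)$ in vertical strips to the right of $\mathrm{Re}(s)=1$ (available from the spectral expansion and from resolvent estimates for the Laplacian), a contour shift turns the double pole at $s=1$ into
$$
S_\kappa(X)\;=\;\cC_E\!\!\sum_{\substack{m<X\\(m,N)=\kappa}}\!\!\varphi(m)\log m\;+\;\D_{E,\kappa}\!\!\sum_{\substack{m<X\\(m,N)=\kappa}}\!\!\varphi(m)\;+\;o\Bigl(\,\sum_{\substack{m<X\\(m,N)=\kappa}}\!\!\varphi(m)\Bigr),
$$
the $o$-term absorbing, in particular, the secondary $X^2$-term of $\sum_{m<X}\varphi(m)\log m$ (which is why the statement carries an unspecified explicit $\D_{E,\kappa}$ rather than a closed formula); dividing by $\sum_{m<X,(m,N)=\kappa}\varphi(m)\asymp X^2$ gives the stated limit. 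The main obstacle is the penultimate step: proving the precise order of the pole and, above all, pinning down its subleading coefficient as the explicit $\D_{E,\kappa}$, which requires the full non-holomorphic second-order perturbation theory (Phillips--Sarnak, Petridis) and a careful separation of the genuinely automorphic contribution from the elementary Dirichlet-series factors encoding the $\varphi(m)$-weight and the condition $(m,N)=\kappa$; getting the Tauberian error down to $o(X^2)$ is a comparatively minor additional point. An alternative route, via Corollary \ref{thetaL}, would estimate $\sum_{m<X}\sum_{\chi\bmod m}|L(E,\chi,1)|^2$ through approximate functional equations and shifted-convolution sums, but the Eisenstein-series argument above is cleaner and is the one carried out in \cite{pr2}.
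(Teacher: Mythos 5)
There is nothing in the paper to compare your argument against: Theorem \ref{pr2} is not proved here at all, it is quoted verbatim from Petridis--Risager \cite{pr2} (their Theorem 1.6, with the constant given by their (8.12)), and the present paper uses it only as an input supporting Conjecture \ref{slopeshift}(ii). Measured against the actual source, your outline does track the strategy of \cite{pr2}: the sum $\varphi(m)\Var(m)$ is encoded in Eisenstein series twisted by (powers of) modular symbols, obtained by differentiating in the character deformation $\gamma\mapsto e^{2\pi i\epsilon\langle\gamma,f_E\rangle}$ of Eisenstein series attached to the cusps of $X_0(N)$ (the condition $(m,N)=\kappa$ indeed selecting a cusp class, $N$ being squarefree); the singularity at $s=1$ is analyzed via the perturbation theory of Phillips--Sarnak and Petridis, the leading Laurent data producing $\cC_E$ through $L(\Sym^2(E),1)$ and the next coefficient producing $\D_{E,\kappa}$, which involves $L'(\Sym^2(E),1)$ as noted in the remark following the theorem; a contour-shift/Tauberian step then yields the averaged asymptotic. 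Be aware, though, that as it stands your text is a roadmap rather than a proof: the decisive points --- the meromorphic continuation of the twisted Eisenstein series past $\Re(s)=1$, the exact order of the pole, the identification of the subleading Laurent coefficient with the explicit $\D_{E,\kappa}$, and the growth estimates needed for the contour shift --- are exactly the content of \cite{pr2} and are invoked rather than established, so the attempt cannot be credited as an independent verification of the statement.
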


\begin{rem}
 Petridis \& Risager compute $\D_{E, \kappa}$  in terms of the values $L(\Sym^2(E),1)$ and 
$L'(\Sym^2(E),1)$. They deal with non-holomorphic Eisenstein series twisted by 
modular symbols. 
 
Lee and Sun \cite{LeeSun} more recently have  proven the same result 
(for arbitrary $N$, averaged over $m$, but without explicit determination of 
the constants $\cC_E$ and $\D_{E, \kappa}$) by considering 
dynamics of continued fractions.

A version of Conjecture \ref{slopeshift}(ii) with $m$ ranging over primes was proved by 
Blomer, Fouvry, Kowalski, Michel, Mili\'cevi\'c, and Sawin in 
\cite[Theorem 9.2]{Blomeretal}.

For related results regarding higher weight modular eigenforms see \cite{B-D}.
See also  \cite{constantinescu} and \cite{nordentoft} for other related results.
\end{rem}

\begin{rem}
The modular symbols are not completely ``random'' subject to Conjecture \ref{slopeshift}.   
Specifically, partial sums  $\sum_{a=\alpha}^{\beta}[a/m]$ behave in a somewhat orderly way.
Numerical experiments led the authors and William Stein to conjecture the following result, 
which was then proved by Diamantis, Hoffstein, Kiral, and Lee (for arbitrary $N$, with an 
explicit rate of convergence) \cite[Theorem 1.2]{DHKL}.

\begin{thm}[\cite{DHKL}]
If $0 < x < 1$ then
$$
\lim_{m \to \infty}\frac{1}{m}\sum_{a=1}^{mx}[a/m]
   = \sum_{n=1}^\infty\frac{a_n \sin(\pi nx)}{n^2 \Omega}
$$
where $\sum_n a_n q^n$ is the modular form $f_E$ corresponding to $E$.
\end{thm}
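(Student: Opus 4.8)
The plan is to recognise the partial sum $\sum_{a\le mx}[a/m]$ as a Riemann sum, over the points $a/m$, for the integral of a single continuous function built from $f_E$; the crux is that the ``raw'' modular symbol $\ms{a/m}$ is the value at the rational point $a/m$ of a function that is defined and continuous on all of $\R$.

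First I would put $[a/m]$ into analytic form. Since $f_E$ has real Fourier coefficients, $\ms{-r}=\overline{\ms{r}}$, so $[a/m]=\Omega^{-1}\,\mathrm{Re}\,\ms{a/m}$ and it suffices to compute $\lim_{m\to\infty}\frac1m\,\mathrm{Re}\sum_{a=1}^{\lfloor mx\rfloor}\ms{a/m}$. Integrating $f_E(z)\,dz$ along the vertical ray from $i\infty$ to $a/m$ and using the rapid decay of $f_E$ at the cusp $a/m$, one gets $\ms{a/m}=2\pi\int_0^\infty f_E(a/m+iy)\,dy$; expanding the Fourier series on the range $y\ge\delta$, integrating term by term, and letting $\delta\to0^+$ (the tail $0<y<\delta$ is negligible by the cusp decay) yields the conditionally convergent identity
\[
\ms{a/m}=\sum_{n\ge1}\frac{a_n}{n}\,e^{2\pi ina/m},
\]
which is consistent with the Birch--Stevens formula of Theorem~\ref{BS}.

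Now set $\Phi(t):=\sum_{n\ge1}\frac{a_n}{n}e^{2\pi int}$ for $t\in\R$. I would first show that $\Phi$ is a well-defined, continuous, $1$-periodic function with $\Phi(a/m)=\ms{a/m}$: by partial summation the uniform additive-twist bound $\bigl|\sum_{n\le N}a_n e^{2\pi int}\bigr|\ll_{E,\varepsilon}N^{1/2+\varepsilon}$ (valid for all real $t$) gives $\bigl|\Phi(t)-\sum_{n\le N}\frac{a_n}{n}e^{2\pi int}\bigr|\ll_{E,\varepsilon}N^{-1/2+\varepsilon}$ uniformly in $t$, exhibiting $\Phi$ as a uniform limit of trigonometric polynomials. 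Continuity of $\Phi$ then makes $\frac1m\sum_{a=1}^{\lfloor mx\rfloor}\Phi(a/m)$ a genuine Riemann sum, so $\frac1m\sum_{a\le mx}\ms{a/m}\to\int_0^x\Phi(t)\,dt$; and the uniform convergence of the partial sums of $\Phi$ lets one interchange $\lim_N$ with $\int_0^x$, giving $\int_0^x\Phi(t)\,dt=\sum_{n\ge1}\frac{a_n}{n}\cdot\frac{e^{2\pi inx}-1}{2\pi in}$. Taking the real part and dividing by $\Omega$, a computation identifies the limit with the claimed series $\sum_{n\ge1}a_n\sin(\pi nx)/(n^2\Omega)$, once the normalisations of $\Omega$ and of the modular symbol are unwound.

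The main obstacle is the uniform additive-twist estimate $\sum_{n\le N}a_n e^{2\pi int}\ll_{E,\varepsilon}N^{1/2+\varepsilon}$ uniformly in $t\in\R$: this is the genuine analytic input (in effect a uniform bound for sums of Hecke eigenvalues against arbitrary additive characters), and a quantitative version of it---combined with a Koksma--Hlawka-type discrepancy bound for the points $\{a/m:a\le mx\}$---is what produces the explicit rate of convergence obtained in \cite{DHKL}. A heavier but more structural alternative, closer to the route of \cite{DHKL}, is to package $\sum_{m\ge1}m^{-s}\sum_{a\le mx}[a/m]$ (or a smoothed variant) as a multiple Dirichlet series / shifted-convolution object and read the asymptotics off its meromorphic continuation, the Hecke relations of Lemma~\ref{manybullets} supplying the necessary functional equations; the delicate point there is the passage from the smooth cutoff implicit in such a series to the sharp cutoff $\mathbf 1_{[1,\,mx]}$.
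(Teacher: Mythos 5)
The paper itself contains no proof of this statement: it is quoted from \cite{DHKL} (see also \cite{ks} and \cite{LeeSun}), so there is nothing internal to compare your argument with, and it must be judged on its own terms. On those terms it has a fatal gap at its central step. Your reduction $[a/m]=\Omega^{-1}\mathrm{Re}\,\ms{a/m}$ and the conditionally convergent expansion $\ms{a/m}=\sum_n\frac{a_n}{n}e^{2\pi ina/m}$ at rational points are fine, but the claimed uniform additive-twist bound $\sum_{n\le N}a_ne^{2\pi int}\ll_{E,\varepsilon}N^{1/2+\varepsilon}$ for all real $t$ is false for the integral weight-two coefficients $a_n$ (which are of size $n^{1/2}$): the correct uniform (Wilton-type) bound is $\ll N\log N$, and square-root cancellation at a rational $t=a/m$ comes with a constant growing like a power of $m$, since the conductor of the additive twist grows with $m$. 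Hence your partial summation does not produce a uniformly convergent series, and indeed no continuous $1$-periodic $\Phi$ with $\Phi(a/m)=\ms{a/m}$ can exist: it would force $\sup_{m,a}|[a/m]|\le\|\Phi\|_\infty/\Omega<\infty$, contradicting Theorem~\ref{pr1} of this paper, which makes the second moment $\frac{1}{\varphi(m)}\sum_a([a/m])^2$ grow like $\cC_E\log m$. So the Riemann-sum strategy collapses at its first step: the summands are individually unbounded, and the entire content of the theorem is that the average over $a$ converges anyway. That cancellation over $a$ is exactly what the known proofs extract by heavier means --- shifted-convolution/Eisenstein-series analysis of additive twists in \cite{DHKL}, or dynamics of continued fractions in \cite{ks} and \cite{LeeSun} --- i.e.\ the route you relegate to a passing ``alternative'' in your last paragraph. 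Even granting a continuous interpolant, your termwise interchange would still have to control frequencies $n$ comparable to or larger than $m$ (for instance $n\equiv 0\pmod m$, where $\frac1m\sum_{a\le mx}e^{2\pi ina/m}\approx x$ rather than $O(1/n)$), which a fixed-$n$ Riemann-sum limit never sees.

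Separately, the endgame is asserted rather than carried out, and as written it does not land on the stated formula: the real part of $\sum_{n\ge1}\frac{a_n}{n}\cdot\frac{e^{2\pi inx}-1}{2\pi in}$ is $\sum_{n\ge1}\frac{a_n\sin(2\pi nx)}{2\pi n^2}$, and no unwinding of the normalisation of $\Omega$ or of $[\,\cdot\,]^{+}$ converts $\sin(2\pi nx)/(2\pi)$ into $\sin(\pi nx)$; the two series are genuinely different functions of $x$ (at $x=1/2$ the first vanishes identically, while the second is $\frac1\Omega\sum_{n\ \mathrm{odd}}(-1)^{(n-1)/2}a_n n^{-2}$, a nonvanishing absolutely convergent twisted $L$-value at $s=2$). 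So even after repairing the analytic input you would still need to perform this final identification explicitly and reconcile it with the precise normalisation in \cite{DHKL}, rather than waving it through.
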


Another proof in the case of squarefree $N$ was given by Sun \cite[Theorem 1.1]{ks}.
\end{rem}

\section{The involution $\iota_F$}
\label{dt}
\begin{defn}
\label{sensitive}
Suppose $F$ is a finite real cyclic extension of $\Q$.  Let $m$ be the conductor of $F$,  
$f := \gcd(m,N)$, and $e := N/f$.  Since $N$ is assumed squarefree, $e$ is prime to $m$.  
Let $\gamma_F$ be the image of $e$ under the map 
$(\Z/m\Z)^\times \onto \Gal(F/\Q)$.  Define an involution $\iota_F$ of the set $\Gal(F/\Q)$ by
$$
\iota_F(\gamma) = \gamma^{-1} \gamma_F^{-1}
$$
Let $w_F = -w_e$ where $w_e$ is the 
eigenvalue of the Atkin--Lehner operator $W_{e}$ (see Lemma \ref{manybullets}(iv))
acting on $f_E$.
\end{defn}

Recall from \eqref{intval} 
that $\theta_F = \sum_{\gamma\in \Gal(F/\Q)} c_{F,\gamma} \gamma$.  

\begin{lem}
\label{alinv}
Suppose $F$ is a finite real cyclic extension of $\Q$.
\begin{enumerate}
\item
We have  
$c_{F,\gamma} = w_F c_{F,\gamma'}$ where $\gamma' = \iota_F(\gamma)$.
\item
The fixed points of $\iota_F$ are the square roots of $\gamma_F^{-1}$ in $\Gal(F/\Q)$, 
so the number of fixed points is:
\begin{itemize}
\item
one if $[F:\Q]$ is odd, 
\item
zero if $\gamma_F$ is not a square in $\Gal(F/\Q)$,
\item
two if $[F:\Q]$ is even and $\gamma_F$ is a square in $\Gal(F/\Q)$.
\end{itemize}
\item If $\gamma = \iota_F(\gamma)$ and $w_F=-1$, then $c_{F,\gamma} =0$.
\end{enumerate}
\end{lem}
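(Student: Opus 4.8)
The plan is to derive all three parts from the Atkin--Lehner relation in Lemma~\ref{manybullets}(iv), translated into the language of $\theta$-coefficients. First I would unwind the definitions: by~\eqref{tc} (or~\eqref{intval}), $c_{F,\gamma}$ is, up to the fixed factor $\delta_E$, the sum of $[a/m]$ over those $a \in (\Z/m\Z)^\times$ whose image in $\Gal(F/\Q)$ is $\gamma$. Lemma~\ref{manybullets}(iv), with our $m$ equal to the conductor of $F$ and our $e, f$ as in Definition~\ref{sensitive}, says that if $ade \equiv -1 \pmod m$ then $[d/m] = -w_e [a/m] = w_F [a/m]$. So the map $a \mapsto d$, where $d$ is the unique class with $ade \equiv -1$, i.e. $d \equiv -(ae)^{-1} \pmod m$, is a bijection of $(\Z/m\Z)^\times$ that sends $[a/m]$ to $w_F^{-1}[d/m]$ (equivalently $[d/m] = w_F[a/m]$, and since $w_F = \pm 1$ this is an involution up to sign). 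Tracking what this bijection does on Galois groups: if $\sigma_{a,m}|_F = \gamma$, then $\sigma_{d,m}|_F$ is the image of $-(ae)^{-1}$, which is the image of $e^{-1}$ times $\gamma^{-1}$ (the image of $-1$ is trivial since $F$ is real), i.e. $\gamma_F^{-1}\gamma^{-1} = \iota_F(\gamma)$. Hence summing the relation $[d/m] = w_F[a/m]$ over the fiber above $\gamma$ gives $c_{F,\iota_F(\gamma)} = w_F c_{F,\gamma}$; since $w_F^2 = 1$ this is equivalent to the asserted $c_{F,\gamma} = w_F c_{F,\iota_F(\gamma)}$. That is part~(i).

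Part~(ii) is pure group theory. A fixed point of $\iota_F$ is a $\gamma$ with $\gamma = \gamma_F^{-1}\gamma^{-1}$, i.e. $\gamma^2 = \gamma_F^{-1}$, so the fixed points are exactly the square roots of $\gamma_F^{-1}$ in the cyclic group $\Gal(F/\Q)$. In a finite cyclic group of order $n$, squaring is a bijection when $n$ is odd (so there is exactly one square root, regardless of the element), while when $n$ is even the squaring map is two-to-one onto the subgroup of squares: an element has either zero or two square roots according as it is a non-square or a square. Applying this with the element $\gamma_F^{-1}$ (which is a square iff $\gamma_F$ is) gives the three bulleted cases verbatim.

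Part~(iii) is then immediate: if $\gamma$ is a fixed point of $\iota_F$, part~(i) gives $c_{F,\gamma} = w_F c_{F,\iota_F(\gamma)} = w_F c_{F,\gamma}$, so $(1 - w_F)c_{F,\gamma} = 0$, and when $w_F = -1$ this forces $2c_{F,\gamma} = 0$, hence $c_{F,\gamma} = 0$ since $c_{F,\gamma} \in \Z$. I expect the only real point requiring care is the bookkeeping in part~(i): getting the precise congruence $d \equiv -(ae)^{-1}$ from the hypothesis $ade \equiv -1$ of Lemma~\ref{manybullets}(iv), and then correctly identifying the image of $-(ae)^{-1}$ in $\Gal(F/\Q)$ with $\iota_F(\gamma)$ — in particular checking that the minus sign disappears because $F$ is real and that the factor $e^{-1}$ restricts to $\gamma_F^{-1}$ (since $\gamma_F$ was defined as the image of $e$, its inverse is the image of $e^{-1}$). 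One should also note that this bijection $a \mapsto d$ is well-defined on residue classes mod $m$ and preserves the property of lying in a given Galois fiber, which is what lets us sum the Atkin--Lehner relation fiberwise. Everything else is formal.
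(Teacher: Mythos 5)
Your proposal is correct and takes essentially the same route as the paper: the paper's proof of this lemma is a one-line citation of the Atkin--Lehner relation (Lemma \ref{manybullets}(iv)) for part (i), with (ii) read off from the definition of $\iota_F$ and (iii) deduced from (i), and your write-up simply fills in the same bookkeeping (the involution $a \mapsto -(ae)^{-1} \bmod m$, the disappearance of $-1$ because $F$ is real, the identification of the image of $e^{-1}$ with $\gamma_F^{-1}$, and the square-root count in a cyclic group). No gaps.
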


\begin{proof}
Assertion (i) follows from the Atkin--Lehner relations satisfied by the modular symbols 
(Lemma \ref{manybullets}(iv)). Assertion (ii) is immediate from the definition, and (iii) 
follows directly from (i).
\end{proof}

\begin{defn}
\label{gendef}
If $F/\Q$ is a real cyclic extension,
we say that $\gamma \in \Gal(F/\Q)$ is {\em generic}, (resp., {\em special$^+$}, 
resp., {\em special$^-$}) if $\gamma \ne \iota_F(\gamma)$ 
(resp., $\gamma = \iota_F(\gamma)$ and $w_F=1$, resp., $\gamma = \iota_F(\gamma)$ and $w_F=-1$).
By Lemma \ref{alinv}(iii), if $\gamma$ is special$^-$ then $c_{F,\gamma} = 0$.
\end{defn}

\section{Distributions of $\theta$-coefficients}
\label{dists} 

Consider real cyclic extensions $F/\Q$ of fixed degree $d\ge 3$ and varying conductor $m$.
By \eqref{tc}, if $\gamma$ is generic (resp., special$^+$) then the $\theta$-coefficient $c_{F,\gamma}$ 
is $2 \delta_E$ times a sum of $\varphi(m)/(2d)$ modular symbols
(resp., $4 \delta_E$ times a sum of $\varphi(m)/(4d)$ modular symbols).
If these were {\em randomly chosen} modular symbols $[a/m]$, one would expect from 
Conjecture \ref{slopeshift}(i) that the collection of data
\begin{multline*}
\Sigma_d := 
\biggl\{\frac{c_{F,\gamma}\sqrt{d}}{\sqrt{\varphi(m)\log(m)}} : 
   \text{$F/\Q$ real, cyclic of degree $d$,} \\[-10pt]
    \text{$m = \cond(F)$, $\gamma \in \Gal(F/\Q)$ generic}\biggr\}
\end{multline*}
(ordered by $m$) would converge to  a normal distribution with variance $2 \delta_E^2 \cC_E$
as $m$ tends to $\infty$.
Similarly the data defined in the same way except with $\gamma$ special$^+$ 
instead of generic, would converge to a normal distribution with variance $4 \delta_E^2 \cC_E$.

Calculations do not support this expectation, at least not for small values of $d$.  See 
Examples \ref{data} below.  In addition, random matrix heuristics (see \cite{DFK,K-S}) 
predict that the data above for $d = 3$ will not converge to a non-zero distribution. 

However, calculations do support the following much weaker Conjecture \ref{newspiky} 
below, which is strong enough for our purposes.  

\begin{defn}
\label{6.1}
For every $F$ of degree $d$ and conductor $m$, and every $\gamma \in \Gal(F/\Q)$ 
define the {\em normalized $\theta$-coefficient}
$$
\tilde{c}_{F,\gamma} := \frac{c_{F,\gamma}\sqrt{d}}{\sqrt{\varphi(m)\log(m)}}.
$$

For $d \ge 3$, $\alpha, \beta \in \R$, and $X \in \R_{>0}$, 
let $\Sigma_{d,\alpha,\beta}(X)$ be the collection of data 
(counted with multiplicity)
\begin{multline*}
\Sigma_{d,\alpha,\beta}(X) := 
\{\tilde{c}_{F,\gamma}m^{\alpha} \log(m)^{\beta} : 
   \text{$F/\Q$ real, cyclic of degree $d$,} \\
    \text{$m = \cond(F) < X$, $\gamma \in \Gal(F/\Q)$ generic or special$^+$}\}.
\end{multline*}
\end{defn}

\begin{conj}
\label{newspiky}
There is a $B_E > 0$ and for every $d \ge 3$ there are $\alpha_d, \beta_d \in \R$ such that 
\begin{enumerate}
\item
for every real open interval $(a,b)$,
$$
\limsup_{X \to \infty} 
   \frac{\#\{\Sigma_{d,\alpha_d,\beta_d}(X) \cap (a,b)\}}{\#\Sigma_{d,\alpha_d,\beta_d}(X)} 
      < B_E(b-a),
$$
\item
$\{\alpha_d\varphi(d) : d \ge 3\}$ is bounded, and $\lim_{d\to\infty} \beta_d = 0$.
\end{enumerate}
\end{conj}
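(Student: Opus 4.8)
The plan is not to attack Conjecture \ref{newspiky} head-on as a statement about modular symbols, but to push it through Birch--Stevens to the analytic side, where both the ``spikiness'' and the exponents $\alpha_d,\beta_d$ have a cleaner description in terms of the special values $L(E,\chi,1)$. First I would record the Fourier-inversion identity on $\Gal(F/\Q)$: combining $\theta_F=\sum_\gamma c_{F,\gamma}\gamma$ with orthogonality of characters and Theorem \ref{BS} gives, for $F$ of conductor $m$ with $[F:\Q]=d$,
\[
c_{F,\gamma}\;=\;\frac{\delta_E}{d\,\Omega}\sum_{\chi}\chi(\gamma)\,\tau(\bar\chi)\,L(E,\bar\chi,1),
\]
the sum over all $d$ characters of $\Gal(F/\Q)$ (the imprimitive $\chi$ contributing only elementary Euler-factor corrections at the primes where $\chi$ is imprimitive). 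Since $|\tau(\bar\chi)|=\sqrt m$ for the faithful characters, $|c_{F,\gamma}|\asymp\sqrt m\cdot\max_{\chi\ \mathrm{faithful}}|L(E,\bar\chi,1)|$, and the failure of the naive normalization $\tilde c_{F,\gamma}$ to possess a nontrivial limiting law is exactly the failure of $|L(E,\chi,1)|$ to possess one in the family of order-$d$ twists. So I would \emph{define} $\alpha_d,\beta_d$ by calibration: choose them so that $m^{\alpha_d}\log(m)^{\beta_d}$ absorbs the discrepancy between the true typical size of $\sqrt m\,|L(E,\chi,1)|$ and the ``random'' guess $\sqrt{\varphi(m)\log m}$. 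The Keating--Snaith/CUE moment conjectures for this family (cf.\ \cite{DFK,K-S}) predict that discrepancy to be a power of $m$ with exponent $\asymp 1/\varphi(d)$ together with a fixed power of $\log m$, and as $d\to\infty$ the family ``derandomizes'' so both corrections die --- which is precisely what part (ii) asserts. Thus (ii) would follow formally from the calibration plus the moment heuristics, with no further work.

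The substance is part (i), the uniform upper density bound, and here the plan has two ingredients in tension. The first is \textbf{discreteness}: the $c_{F,\gamma}$ are integers, so on a fixed $F$ the values $V_{F,\gamma}:=\tilde c_{F,\gamma}\,m^{\alpha_d}\log(m)^{\beta_d}=c_{F,\gamma}L_m$, with $L_m:=\sqrt d\,m^{\alpha_d}\log(m)^{\beta_d}/\sqrt{\varphi(m)\log m}$, all lie on an arithmetic progression of step $L_m$, so a fixed interval $(a,b)$ meets at most $\lfloor(b-a)/L_m\rfloor+1$ of them. Summing the $(b-a)/L_m$ term over all $F$ with $\cond(F)<X$, and using that each such $F$ contributes at least $(d-1)/2\ge 1$ points to $\Sigma_{d,\alpha_d,\beta_d}(X)$, produces the desired $O(b-a)$ bound for the proportion \emph{provided $L_m\asymp 1$ on average}, which is what the calibration above arranges (up to the predicted fluctuations). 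The second ingredient is needed to kill the ``$+1$'': I must rule out that a positive proportion of $F$ have their whole block of $\theta$-coefficients trapped in a short window --- equivalently, that $|L(E,\chi,1)|$ is abnormally small for a positive proportion of order-$d$ characters. For the ``not abnormally large'' side, Theorem \ref{pr2} and, for $d$ and $m$ prime, the moment bounds of Blomer, Fouvry, Kowalski, Michel, Mili\'cevi\'c, and Sawin \cite{Blomeretal} give $\sum_{F,\gamma}|V_{F,\gamma}|^{2}\ll\#\Sigma_{d,\alpha_d,\beta_d}(X)$, which forbids clustering of the large values. For the ``not abnormally small'' side one wants a quantitative non-vanishing statement with power savings --- ideally $\#\{(F,\gamma):0<|c_{F,\gamma}|<\epsilon S(\cond F)\}=o_\epsilon(\#\Sigma_{d,\alpha_d,\beta_d}(X))$ where $S(m)$ is the typical size --- obtainable in principle from a mollified first-and-second moment of $L(E,\chi,1)$ in the style of non-vanishing proofs, at least for $d$ prime.

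I expect that last point to be the decisive obstacle, and it is why this is stated as a conjecture. For $d\ge 3$ there is no unconditional control --- not even positive-proportion non-vanishing --- of $|L(E,\chi,1)|$ as $\chi$ runs over characters of order $d$, and the random-matrix picture flagged in the text predicts genuine spikiness (the smallest CUE eigenvalues feeding $L$-values as small as a negative power of $m$, below the discretization scale imposed by the period), so one cannot expect a limiting law or a clean unconditional determination of $\alpha_d,\beta_d$. What the plan \emph{does} deliver rigorously is the reduction of Conjecture \ref{newspiky} to a pair of statements about $L(E,\chi,1)$ in the order-$d$ family --- an upper moment bound (available for $d,m$ prime via \cite{Blomeretal,pr2}) and a quantitative non-vanishing lower bound (open for $d\ge 3$) --- together with a calibration of $\alpha_d,\beta_d$ that turns (ii) into a formal corollary; the paper accordingly supports the conjecture with numerical evidence rather than a proof.
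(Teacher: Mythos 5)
There is no proof in the paper to measure your argument against: Conjecture \ref{newspiky} is presented as a conjecture and is never proved. The paper's entire support for it is what you concede at the end: the numerical experiments of Examples \ref{data} (Figures 1--3), together with the remark immediately following the conjecture, where random matrix heuristics \cite{DFK,DFK2,K-S} suggest the admissible values $\alpha_3=0$, $\beta_3=3/4$, and the data suggest $\alpha_d=0$, $\beta_d\to 0$, with the constants $B_d$ bounded. In the paper's architecture the conjecture is an \emph{input} --- a deliberately weak ``upper bound for heuristic likelihood'' (Remark \ref{prob}) --- from which Heuristics \ref{hh} and \ref{h2}, and then Conjectures \ref{bigc} and \ref{conj235}, are derived; it is not something the paper attempts to establish. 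So your proposal cannot be compared to a proof in the paper, and, as you yourself say, it is not one: it is an alternative heuristic justification (Birch--Stevens inversion plus CUE moment conjectures) in place of the paper's (direct computation of normalized $\theta$-coefficients plus the discretisation picture of \cite{DFK,DFK2}).

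Even judged as a conditional reduction, several steps are soft. First, from the inversion formula $c_{F,\gamma}=\frac{1}{d}\sum_\chi\bar\chi(\gamma)\,\chi(\theta_F)$ one gets only $|c_{F,\gamma}|\ll\sqrt m\,\max_\chi|L(E,\chi,1)|$ (plus imprimitive corrections); your claimed $\asymp$ fails because the character sum can cancel, so spikiness of the coefficients is not literally equivalent to spikiness of the $L$-values. Second, defining $\alpha_d,\beta_d$ ``by calibration'' makes part (ii) nearly tautological while doing nothing toward the real content of part (i), namely a single constant $B_E$ uniform in $d$ and in the interval. Third, the second-moment input you invoke is not actually available: Theorem \ref{pr2} and \cite{Blomeretal} control variances of modular symbols averaged over $m$ (or over the full character group modulo a prime), whereas $\sum_\gamma c_{F,\gamma}^2=\frac{1}{d}\sum_\chi|\chi(\theta_F)|^2$ requires moments restricted to characters of exact order $d$ --- precisely the intra-correlation issue the paper isolates as assumption (H2) and Remark \ref{intra}, not something it or the cited literature proves. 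Fourth, and most importantly, the lattice-spacing argument bounds the number of \emph{distinct} values of $c_{F,\gamma}L_m$ in $(a,b)$, but $\Sigma_{d,\alpha_d,\beta_d}(X)$ is counted with multiplicity (Definition \ref{6.1}): within one field all of the roughly $d$ coefficients could land on the same lattice point, so your ``$+1$'' per field is really ``$+d$'', comparable to that field's entire contribution, and excluding such clustering is exactly the quantitative non-vanishing/small-value problem you identify as open. The reduction therefore relocates the difficulty rather than removing it, which is consistent with the authors' decision to state Conjecture \ref{newspiky} as a conjecture supported by data rather than by argument.
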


\begin{rem}
Random matrix theory heuristics (see \cite{DFK, DFK2, K-S}) suggest that for $d = 3$ 
and every real open interval $(a,b)$, 
\begin{equation}
\label{just}
\limsup_{X \to \infty} 
   \frac{\#\{\Sigma_{d,\alpha_d,\beta_d}(X) \cap (a,b)\}}{\#\Sigma_{d,\alpha_d,\beta_d}(X)} < B_d(b-a),
\end{equation}
with $\alpha_3 = 0$, $\beta_3 = 3/4$ and a sufficiently large $B_3$.
Empirical data (see Examples \ref{data}) suggest \eqref{just} holds for all $d$, 
with $\alpha_d = 0$, $\beta_d$ converging to zero for large $d$ and $B_d$ bounded for large $d$.  
Taking $B_E$ to be the maximum of the $B_d$ leads to the statement of Conjecture \ref{newspiky}.  
\end{rem}

\begin{rem}
\label{prob}
Here is a heuristic shorthand description of Conjecture \ref{newspiky}: 
There is a $B_{E} > 0$ such that for
\begin{itemize} 
\item
every cyclic extension $F/\Q$ of degree $d \ge 3$, 
\item  
every generic or special$^+$ $\gamma\in \Gal(F/\Q)$, and 
\item 
every real interval $(a,b)$,
\end{itemize}
the ``{\bf heuristic likelihood}" that $\tilde{c}_{F,\gamma}$ lies in $(a,b)$, which we will denote by 
$\Prob\bigl[\tilde{c}_{F,\gamma} \in (a,b)\bigr]$, is bounded above by
$$
B_{E} (b-a) m^{\alpha_d}\log(m)^{\beta_d}, 
$$ 
where  $m$ is the conductor of $F$ (and $\alpha_d, \beta_d$ are as in Conjecture \ref{newspiky}).

By {\em heuristic  likelihood ${\mathcal P}[X]$ of  an outcome $X$ occurring} 
we mean a real number that gives a (merely heuristically supported guess for an) 
upper bound for the number of times that the outcome $X$ occurs within  a specific 
range of events.  The main use we will make of such (nonproved, but only 
heuristically suggested) estimates  is that in various natural contexts  we produce 
computationally supported  upper bounds  of {\em heuristic likelihoods}  
${\mathcal P}[X_i]$ (for $i$ in some index set $I$) where the $\{X_i\}_{i\in I}$  
is a  collection of possible outcomes for some specific range of events. If  
$\sum_{i\in I}{\mathcal P}[X_i]$  is finite, we simply interpret our heuristic 
as suggesting that  the totality of outcomes  of the form $X_i$   (for $i \in I$)  
that actually occur---within the specific range of events---is finite.
\end{rem}
\begin{rem}\label{for.data}

The smaller those exponents $\alpha_d, \beta_d$ are---and the faster they 
approach $0$ as $d$ grows---the stronger is the claim in Conjecture \ref{newspiky}.

It may well be that $\alpha_d$ may be taken to be $0$ for all $d\ge 3$; the 
empirical data we have computed (see \ref{data} below) which  has led us to 
frame  Conjecture \ref{newspiky}, might almost suggest this. 
But we don't believe we have computed far enough yet to say anything 
that precise, and---more importantly---the eventual qualitative arithmetic 
conjecture that we make  (Conjecture \ref{conj235}) doesn't need such 
precision for its heuristic support; we will only rely on the formulation of 
Conjecture \ref{newspiky}.\end{rem}

\begin{exas}
\label{data}
For each of the three elliptic curves 11A1, 37A1, and 32A1 
(in the notation of Cremona's tables \cite{cremona}), and five (prime) 
values of $d$, we computed the first (approximately) 50,000 normalized 
$\theta$-coefficients $\tilde{c}_{F,\gamma}$, with $F$ ordered by conductor and 
$\gamma$ generic.  The resulting distributions 
are shown in Figures 1 through 3.  As $d$ grows the distributions approach the ``expected'' 
normal distribution, shown as the dashed line in each figure.

\begin{figure}[H]
\includegraphics[height=3.1in]{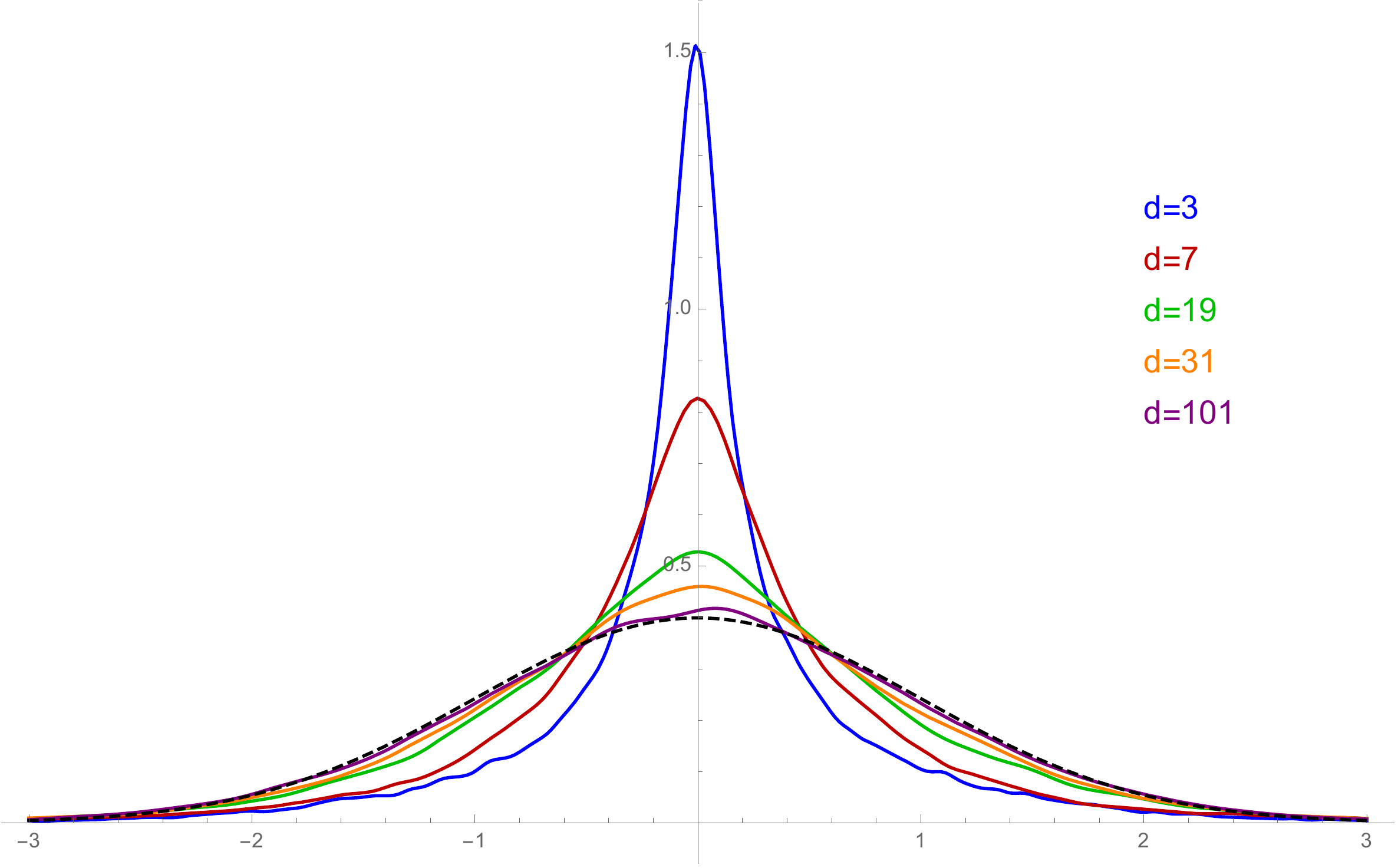}
\caption{Distribution of normalized $\theta$-coefficients for $E$ = 11A1 and varying $d$.}
\end{figure}
\begin{figure}[H]
\includegraphics[height=3.1in]{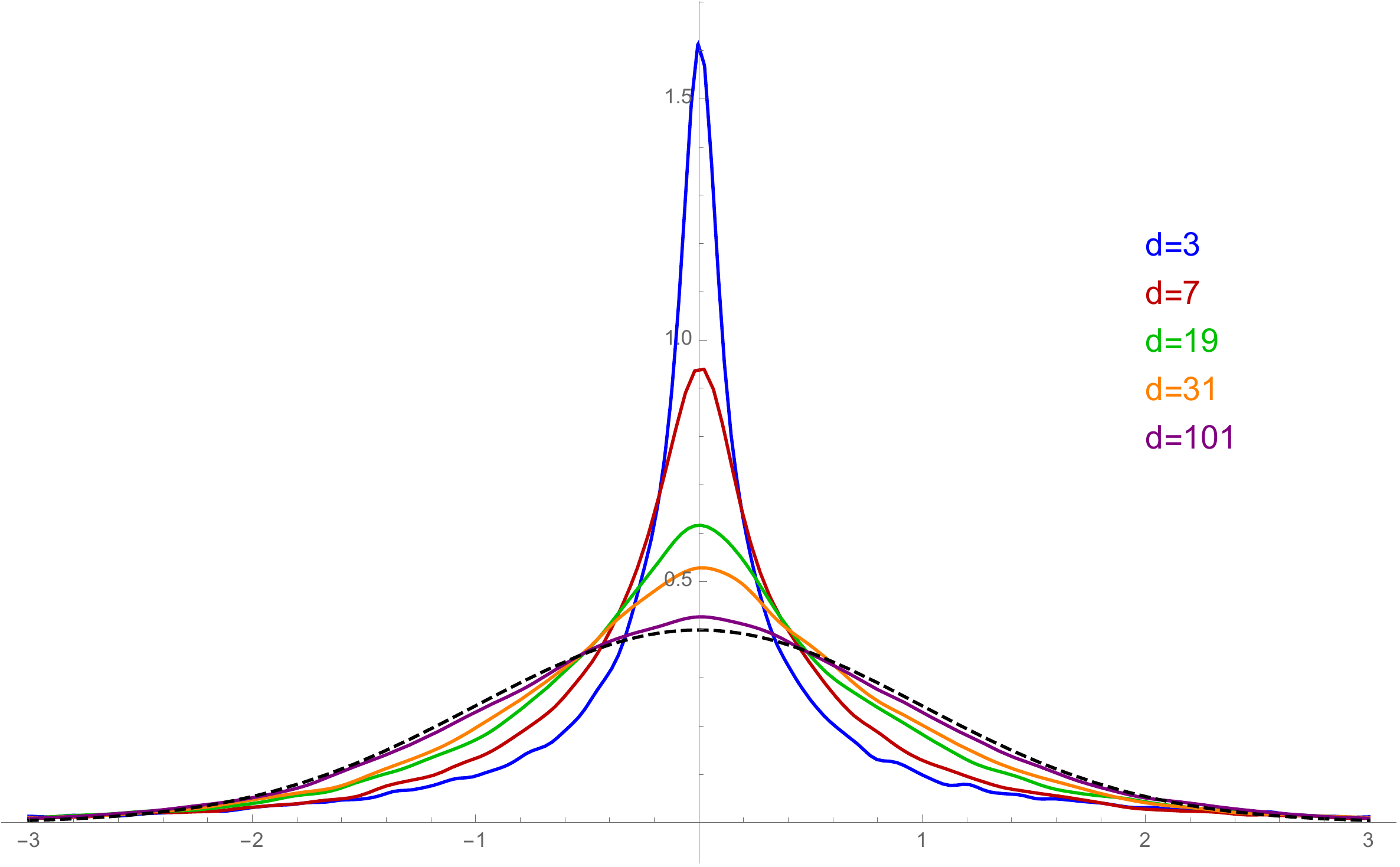}
\caption{Distribution of normalized $\theta$-coefficients for $E$ = 37A1 and varying $d$.}
\end{figure}
\begin{figure}[H]
\includegraphics[height=3.1in]{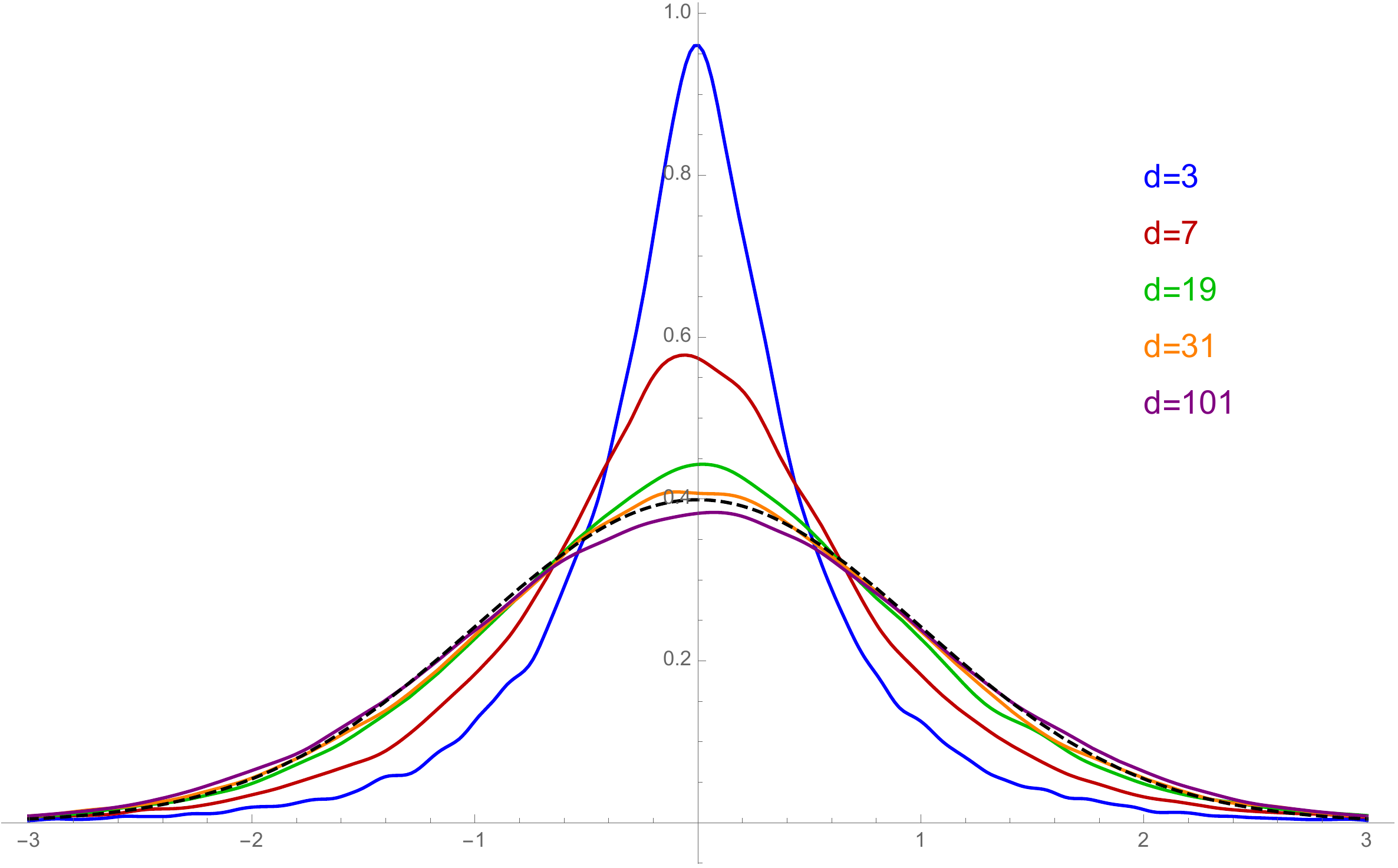}
\caption{Distribution of normalized $\theta$-coefficients for $E$ = 32A1 and varying $d$.}
\end{figure}
\end{exas}

\part{Heuristics}

\section{The heuristic for cyclic extensions of prime degree}
\label{h_prime}

Recall that we have assumed that $E$ is semistable.

Our heuristic is based on two assumptions:
\begin{itemize}
\item[(H1)]
the $c_{F,\gamma}$ are distributed randomly subject to the constraints imposed 
by Conjecture \ref{newspiky} and Remark \ref{prob},
\item[(H2)]
the only correlations among the $c_{F,\gamma}$ for fixed $F$ and varying $\gamma$ 
are the Atkin--Lehner relations of Lemma \ref{alinv}.
See Remark \ref{intra} below for more about the issue of ``intra-correlation''.
\end{itemize}

For  simplicity, we first consider the case of extensions of prime degree.
Fix for this section a character $\chi$ of prime order $p \ge 3$.  As above 
we write $m$ for the conductor of $\chi$, $F$ for the corresponding real cyclic 
extension of $\Q$, and $G := \Gal(F/\Q)$.  Let $B_{E}, \alpha_p, \beta_p \in \R$ be as in 
Conjecture \ref{newspiky}.

Since $p$ is odd, Lemma \ref{alinv}(ii) shows that the involution 
$\iota_F$ has a unique fixed element 
$\sigma \in G$.  Choose a subset $S \subset G$ consisting 
of one element from each pair $\{\gamma,\iota_F(\gamma)\}$ of generic elements.  

\begin{lem}
\label{eqth}
We have
$$
\sum_{\gamma\in G} \chi(\gamma) c_{F,\gamma} = 0 \iff \text{$c_{F,\gamma} = c_{F,\sigma}$ 
for all $\gamma \in S$.}
$$
\end{lem}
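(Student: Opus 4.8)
The plan is to exploit that $\chi$ is a \emph{faithful} character of the cyclic group $G = \Gal(F/\Q)$ of prime order $p$, so that the vanishing of $\sum_{\gamma\in G}\chi(\gamma)c_{F,\gamma}$ is governed by a single cyclotomic relation, and then to feed in the Atkin--Lehner relation of Lemma \ref{alinv}. First I would fix a generator $g$ of $G$ and set $\zeta := \chi(g)$, a primitive $p$-th root of unity; then $\sum_{\gamma\in G}\chi(\gamma)c_{F,\gamma} = P(\zeta)$, where $P(x) := \sum_{j=0}^{p-1} c_{F,g^j}\,x^{\,j}\in\Z[x]$ has degree at most $p-1$. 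Since the minimal polynomial of $\zeta$ over $\Q$ is $\Phi_p(x) = 1 + x + \cdots + x^{p-1}$, of degree exactly $p-1$, we have $P(\zeta) = 0$ iff $\Phi_p \mid P$ in $\Q[x]$, and comparing degrees this forces $P = c\cdot\Phi_p$ with $c = c_{F,g^0}\in\Z$. In other words, $\sum_{\gamma\in G}\chi(\gamma)c_{F,\gamma} = 0$ if and only if the $\theta$-coefficients $c_{F,\gamma}$, $\gamma\in G$, are all equal to one another. The implication $\Longrightarrow$ of the lemma is then immediate, since ``all equal'' in particular means ``all equal to $c_{F,\sigma}$''.

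For the implication $\Longleftarrow$ I would assume $c_{F,\gamma} = c_{F,\sigma}$ for all $\gamma\in S$ and argue that in fact \emph{every} coefficient equals $c_{F,\sigma}$; by the previous paragraph this yields $\sum_{\gamma\in G}\chi(\gamma)c_{F,\gamma} = 0$ at once (equivalently, $\theta_F = c_{F,\sigma}\sum_{\gamma\in G}\gamma$, to which one applies $\chi$ using $\chi\ne 1$). Each generic element of $G$ is either some $\gamma\in S$ or the partner $\iota_F(\gamma)$ of such an element, and for the partner Lemma \ref{alinv}(i) gives $c_{F,\iota_F(\gamma)} = w_F\,c_{F,\gamma}$ with $w_F = \pm 1$. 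If $w_F = 1$ this reads $c_{F,\iota_F(\gamma)} = c_{F,\gamma} = c_{F,\sigma}$, as desired. If $w_F = -1$, then the unique fixed element $\sigma$ of $\iota_F$ is special$^-$, so $c_{F,\sigma} = 0$ by Lemma \ref{alinv}(iii), and hence $c_{F,\iota_F(\gamma)} = -c_{F,\gamma} = -c_{F,\sigma} = 0 = c_{F,\sigma}$ as well. Thus all generic coefficients equal $c_{F,\sigma}$, and since $\sigma$ is the only non-generic element, every coefficient equals $c_{F,\sigma}$.

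The one genuine subtlety --- and the sole place where the two cases $w_F = \pm 1$ interact --- is this last step: the hypothesis constrains only one representative of each generic pair $\{\gamma,\iota_F(\gamma)\}$, and when $w_F = -1$ it is exactly the special$^-$ vanishing $c_{F,\sigma} = 0$ that makes the partner's coefficient come out equal to $c_{F,\sigma}$. Everything else is the elementary arithmetic of $\Q[\Z/p\Z]\cong\Q\times\Q(\zeta)$ and presents no difficulty.
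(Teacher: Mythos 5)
Your proof is correct and follows essentially the same route as the paper: reduce the vanishing of $\sum_{\gamma\in G}\chi(\gamma)c_{F,\gamma}$ to the statement that all $c_{F,\gamma}$ are equal (the paper phrases your $\Phi_p$-divisibility argument as ``the only $\Q$-linear relation among the $p$-th roots of unity is that their sum is zero''), and then use Lemma \ref{alinv}(i) together with $c_{F,\sigma}=0$ when $w_F=-1$ to recover all coefficients from those indexed by $S$. Your write-up merely makes explicit the case analysis that the paper compresses into its parenthetical remark.
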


\begin{proof}  
The only $\Q$-linear relation among the values of $\chi$ 
(i.e., the $p$-th roots of unity) is that their 
sum is zero.  It follows that the sum over $\gamma$ is zero if and only if 
all $c_{F,\gamma}$ are equal.
The lemma now follows from Lemma \ref{alinv}(i) (note that $a_{\sigma} = 0$ if $w_F = -1$).
\end{proof}

\begin{heur}
\label{hh}
With notation as above, 
$$
\Prob[L(E,\chi,1) = 0] 
   < \biggl(B_{E}^2 \frac{p m^{2\alpha_p}}{\varphi(m)\log(m)^{1-2\beta_p}}\biggr)^{(p-1)/4}.
$$
\end{heur}

\begin{rem}
The notation $\Prob[L(E,\chi,1) = 0]$ is meant to stand for (an upper bound for) 
the heuristic likelihood that the value $L(E,\chi,1)$ vanishes (see Remark \ref{prob}). 
\end{rem}

\begin{proof}[Justification for Heuristic \ref{hh}]
By Corollary \ref{thetaL} and Lemma \ref{eqth}, 
$$
L(E,\chi,1) = 0 \iff \sum_{\gamma\in G} \chi(\gamma) c_{F,\gamma} = 0
   \iff \text{$c_{F,\gamma} = c_{F,\sigma}$ for all $\gamma\in S$.}
$$
Let 
$$
r_\chi := \frac{\sqrt{p}}{\sqrt{\varphi(m)\log(m)}},
$$
so the normalized $\theta$-coefficients $\tilde{c}_{F,\gamma}$ of 
Definition \ref{6.1} are given by 
$\tilde{c}_{F,\gamma} = c_{F,\gamma} r_\chi$.
Since the $c_{F,\gamma}$ are integers, we have 
\begin{multline*}
c_{F,\gamma} = c_{F,\sigma} \iff c_{F,\gamma} \in 
   (c_{F,\sigma}-\textstyle\frac{1}{2}, c_{F,\sigma}+\frac{1}{2}) \\
   \iff \tilde{c}_{F,\gamma} \in 
      \textstyle \bigl(\tilde{c}_{F,\sigma}-\frac{r\chi}{2}, 
      \tilde{c}_{F,\sigma}+\frac{r\chi}{2}\bigr)
\end{multline*}
Using Conjecture \ref{newspiky}, Remark \ref{prob} 
and our heuristic assumptions (H1) and (H2) above, we have
\begin{multline*}
\Prob[c_{F,\gamma} = c_{F,\sigma} ~\forall \gamma\in S] 
   = \prod_{\gamma\in S} \Prob[\tilde{c}_{F,\gamma} \in 
   \textstyle (\tilde{c}_{F,\sigma}-\frac{r_\chi}{2}, \tilde{c}_{F,\sigma}+\frac{r_\chi}{2})]  \\
< (B_E r_\chi m^{\alpha_p} \log(m)^{\beta_p})^{(p-1)/2}
= \biggl(B_{E}^2 \frac{p m^{2\alpha_p} \log(m)^{2 \beta_p}}{\varphi(m)\log(m)}\biggr)^{(p-1)/4}
\end{multline*}
as desired.
\end{proof}

\section{The heuristic for general cyclic extensions}
\label{h_general}

Fix for this section an even character $\chi$ of arbitrary degree $d \ge 3$.  As above 
we write $m$ for the conductor of $\chi$, $F$ for the corresponding real cyclic 
extension of $\Q$, and $G := \Gal(F/\Q)$.  Let $B_{E}$, $\alpha_d$ and $\beta_d$
be the constants in Conjecture \ref{newspiky}.

\begin{heur}
\label{h2}
With notation as above,
$$
\Prob[L(E,\chi,1) = 0] < \biggl(B_{E}^2 \frac{d m^{2\alpha_d}}{\varphi(m)\log(m)^{1-2\beta_d}}\biggr)^{\varphi(d)/4}.
$$
\end{heur}

The rest of this section is devoted to justifying Heuristic \ref{h2}, using 
heuristic assumptions (H1) and H(2) of \S\ref{h_prime}.

Recall the sign $w_F = \pm 1$, the element $\gamma_F \in G$, and the involution 
$\iota_F : G \to G$ of Definition \ref{sensitive}.
We extend $\iota_F$ to a $\Z$-linear involution of $\Z[G]$ 
(also denoted by $\iota_F$).

Fix a generator $g$ of $G$.  Let $\zeta \in \bmu_d$ denote the primitive 
$d$-th root of unity $\chi(g)$.  
Define a $\Z$-linear involution $\iota_\chi$ of the cyclotomic ring $\Z[\zeta]$ by 
$$
\iota_\chi(\rho) = \chi(\gamma_F)^{-1}\bar\rho
$$
where $\rho \mapsto \bar\rho$ is complex conjugation.  
Then we have a commutative diagram
$$
\xymatrix{
\Z[G]^{\iota_F=w_F} \ar@{^(->}[r] \ar@{->>}_\chi[d] 
   & \Z[G] \ar^{\iota_F}[r] \ar@{->>}_\chi[d] & \Z[G] \ar@{->>}^\chi[d] \\
\Z[\zeta]^{\iota_\chi=w_F} \ar@{^(->}[r] & \Z[\zeta] \ar_{\iota_\chi}[r] & \Z[\zeta]
}
$$
where, for example, $\Z[G]^{\iota_F=w_F}$ denotes 
$\{\rho \in \Z[G] : \iota_F(\rho) = w_F \rho\}$.

\begin{defn}
\label{bases}
For $\gamma \in G$, define $v_\gamma \in \Z[G]^{\iota_F=w_F}$ by
$$
v_\gamma := 
\begin{cases}
\gamma + w_F \iota_F(\gamma) & \text{if $\gamma$ is generic}, \\
\gamma & \text{if $\gamma$ is special$^+$}, \\
0 & \text{if $\gamma$ is special$^-$}.
\end{cases}
$$
\end{defn}

Fix a subset $G_0 \subset G$ consisting of all special$^+$ elements of $G$ 
and one element from each pair $\{\gamma,\iota_F(\gamma)\}$ of generic elements.  
Then the set $\{v_\gamma : \gamma \in G_0\}$ is a $\Z$-basis of $\Z[G]^{\iota_F=w_F}$.

Since $\chi : \Z[G]^{\iota_F=w_F} \to \Z[\zeta]^{\iota_\chi=w_F}$ 
is surjective, we can choose a subset $S \subset G_0$ such that, 
writing $A \subset \Q[G]^{\iota_F=w_F}$ for the $\Q$-vector space spanned by 
$\{v_\gamma : \gamma \in S\}$, 
$\chi$ maps $A$ isomorphically to $\Q(\zeta)^{\iota_\chi=w_F}$.  
Let $\lambda : \Q(\zeta)^{\iota_\chi=w_F} \to A$ be the inverse isomorphism.
In particular we have
$$
\#S = \dim_\Q\Q(\zeta)^{\iota_\chi=w_F} = \varphi(d)/2.
$$

\begin{proof}[Justification for Heuristic \ref{h2}] 
Define 
$$
\rho := \sum_{\gamma\in G} c_{F,\gamma} \gamma.
$$
Then $\rho \in  \Z[G]^{\iota_F=w_F}$, so $\rho = \sum_{\gamma\in G_0}c_{F,\gamma} v_\gamma$.
Let 
$$
\rho_1 := \sum_{\gamma\in S}c_{F,\gamma} v_\gamma, \qquad  
   \rho_2 := \sum_{\gamma\in G_0-S}c_{F,\gamma} v_\gamma,
$$
so $\rho = \rho_1+\rho_2$ and $\rho_1 \in A$.
We have
\begin{multline*}
\sum_{\gamma\in G} \chi(\gamma) c_{F,\gamma} = 0 \iff \chi(\rho) = 0 \\
   \iff \chi(\rho_1) = -\chi(\rho_2)
   \iff \rho_1 = -\lambda(\chi(\rho_2)).
\end{multline*}
Since $\lambda(\chi(\rho_2)) \in A$ we can write 
$-\lambda(\chi(\rho_2)) = \sum_{\gamma\in S}b_\gamma v_\gamma$ with $b_\gamma \in \Q$.
Hence, writing $r_\chi := \frac{\sqrt{d}}{\sqrt{\varphi(m)\log(m)}}$ as in \S\ref{h_prime},
\begin{align*}
\sum_{\gamma\in G} \chi(\gamma) c_{F,\gamma} = 0 
   &\iff c_{F,\gamma} = b_\gamma \quad \text{for every $\gamma\in S$} \\
   &\iff c_{F,\gamma} \in \textstyle(b_\gamma-\frac{1}{2}, b_\gamma+\frac{1}{2}) 
   \quad \text{for every $\gamma\in S$} \\
   &\iff \tilde{c}_{F,\gamma} 
   \in \textstyle({b_\gamma}{r_\chi}-\frac{r_\chi}{2}, {b_\gamma}{r_\chi}+\frac{r_\chi}{2}) 
   \quad \text{for every $\gamma\in S$} 
\end{align*}
so using Conjecture \ref{newspiky}, Remark \ref{prob} 
and the heuristic assumptions (H1) and (H2) of \S\ref{h_prime} we have
\begin{multline*}
\Prob\biggl[\sum_{\gamma\in G} \chi(\gamma) c_{F,\gamma} = 0\biggr] 
   = \prod_{\gamma\in S} \Prob\biggl[\tilde{c}_{F,\gamma} 
      \in \textstyle({b_\gamma}{r_\chi}-\frac{r_\chi}{2}, {b_\gamma}{r_\chi}+\frac{r_\chi}{2}) \biggr] \\
   < (B_{E} r_\chi m^{\alpha_d} \log(m)^{\beta_d})^{\#S} 
   = \biggl(B_E^2 \frac{d m^{2\alpha_d}\log(m)^{2\beta_d}}{\varphi(m)\log(m)}\biggr)^{\#S/2}
\end{multline*}
(note that the $b_\gamma$ for $\gamma \in S$ depend only on the $c_{F,\gamma}$ for $\gamma \in G_0-S$).
Since $\#S = \varphi(d)/2$, and 
$L(E,\chi,1) = 0$ if and only if $\sum_{\gamma\in G} \chi(\gamma) c_{F,\gamma} = 0$ by Corollary \ref{thetaL}, 
this gives the desired heuristic.
\end{proof}

\begin{rem}
\label{intra}
The upper bound of Heuristic \ref{h2}, specifically the exponents $\varphi(d)/4$ and 
$\varphi(d)/2$, depends on the assumption (H2) of no intra-correlations, i.e., no 
statistical correlations connecting $\varphi(d)/2$ different $\theta$-coefficients 
of a given $\theta$-element, these being chosen to have the property that no two
are brought into one another by the involution $\iota_F$ (see \S\ref{dt}).   
We might refer to the exponent $\varphi(d)/2$ as the {\em intra-correlation exponent}.  
We will see that a qualitative version of our heuristic will still seem viable even if 
we assume significantly {\em smaller} intra-correlation exponents, e.g., if we 
have an upper bound of the form:
$$
\Prob[L(E,\chi,1) = 0] < 
   \biggl(B_{E}^2 \frac{d m^{2\alpha_d}}{\varphi(m)\log(m)^{1-2\beta_p}}\biggr)^{t(d)}
$$
where $t(d) \gg \log(d)$ (see Proposition \ref{lemprop2} below).
\end{rem}

\section{Expectations}

In this section we suppose that Conjecture \ref{newspiky} holds, and we 
fix data $B_E, \alpha_d, \beta_d$ for all $d \ge 3$ as in that conjecture.

\begin{defn}
\label{cd}
For every $d$ let $\X_d$ denote the set of even primitive Dirichlet characters of order $d$.
Define 
$$
F_d(m) := \#\{\chi\in \X_d : \mathrm{conductor}(\chi) = m\},
$$
and for every $X$
$$
\X_d(X) := \{\chi\in\X_d : \mathrm{conductor}(\chi) \le X\},
$$
\begin{equation}
\label{Ed}
\E_d(X) := \sum_{m\le X} F_d(m) 
   \biggl(B_{E}^2 \frac{d m^{2\alpha_d}}{\varphi(m)\log(m)^{1-2\beta_d}}\biggr)^{\varphi(d)/4}.
\end{equation}
\end{defn}

Our Heuristic \ref{h2} suggests the following.
\begin{heur}
\label{heurist}
We expect
$$
\#\{\chi \in \X_d(X) : L(E,\chi,1) = 0\} \le \E_d(X).
$$
In particular if $\E_d(\infty)$ is finite, then the number of $\chi$ of 
order $d$ with $L(E,\chi,1) = 0$ should be finite.

More generally, if $D \subset \Z_{\ge 3}$, then we expect
$$
\#\{\chi \in \cup_{d \in D}\X_d(X) : L(E,\chi,1) = 0\} \le \sum_{d \in D}\E_d(X).
$$
\end{heur}

\begin{prop}
\label{expprop}
Suppose $p$ is prime, and let 
$$
\sigma := (1-2\alpha_p)(p-1)/4, \quad \tau := \beta_p(p-1)/2.
$$
As $X \in \R$ goes to infinity we have the following upper bounds on $\E_p(X)$.
\begin{enumerate}
\itemsep=3pt
\item
If $\sigma>1$, or $\sigma=1$ and $\tau<-1$, then $\E_p(\infty)$ is finite.
\item
If $\sigma = 1$ and $\tau=-1$, then $\E_p(X) \ll_{E,p} \log\log(X)$.
\item
If $\sigma = 1$ and $\tau>-1$, then 
$
\E_p(X) \ll_{E,p} \log(X)^{\tau+1}.
$
\item
If $0 < \sigma < 1$, then
$
\E_p(X) \ll_{E,p} X^{1-\sigma} \log(X)^\tau.
$
\end{enumerate}
\end{prop}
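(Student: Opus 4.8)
The plan is to reduce the statement to a standard estimate for sums of the form $\sum_{m \le X} F_p(m)/\varphi(m)^{c}$ modified by powers of $\log m$, and then to case-split according to the sign of the exponent $1 - \sigma$. First I would substitute the definition \eqref{Ed} and pull out the constant: with $e := \varphi(p)/4 = (p-1)/4$,
$$
\E_p(X) = \bigl(B_E^2\, p\bigr)^{e}\sum_{m \le X} \frac{F_p(m)\, m^{2\alpha_p e}}{\varphi(m)^{e}\,\log(m)^{(1-2\beta_p)e}}.
$$
Since $2\alpha_p e = (1 - \sigma)\cdot\text{(something)}$... more precisely $\varphi(p) = p-1$ so the exponent of $m$ in the numerator is $2\alpha_p(p-1)/4$, the exponent of $\varphi(m)$ is $(p-1)/4$, and the exponent of $\log m$ in the denominator is $(1-2\beta_p)(p-1)/4 = (p-1)/4 - \tau/2\cdot\ldots$; I would bookkeep so that, using $\varphi(m) \asymp m/\log\log m$ (or more cheaply $\varphi(m) \gg m^{1-\varepsilon}$), the summand becomes essentially $m^{-\sigma}\log(m)^{\tau}\cdot F_p(m)\cdot m^{o(1)}$, where I absorb the harmless $\log\log$ and $m^{\varepsilon}$ losses into the fact that $\sigma$ is compared to $1$ strictly in cases (i) and (iv), and handled more carefully in the boundary cases (ii), (iii).

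Next I would bring in the count of primitive even characters of order $p$ of a given conductor. For $p$ an odd prime, $F_p(m)$ is supported on integers $m$ all of whose prime factors are $\equiv 1 \pmod p$ (together with possibly one factor of a prime $q$ with $q \equiv 1 \pmod p$, etc.), and for such $m$ one has $F_p(m) \ll_p (p-1)^{\omega(m)} \ll_p m^{o(1)}$. Hence $\sum_{m \le X} F_p(m) m^{-s}$ is, up to $X^{o(1)}$, comparable to $\sum_{m \le X, \, p \mid \text{ord}} 1$, and crucially the partial sums satisfy $\sum_{m \le X} F_p(m) \ll_p X/\log(X)^{1-1/(p-1)}$ by a Landau–Selberg–Delange / Wirsing-type estimate for integers composed of primes in the residue class $1 \bmod p$. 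This is the one genuinely arithmetic input; everything else is summation by parts. I expect I can cite this (or the weaker $\sum_{m \le X} F_p(m) \ll X$, which already suffices for (i) and (iv) when the inequalities are strict) rather than prove it.

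The core of the argument is then Abel summation. Writing $\Phi_p(X) := \sum_{m \le X} F_p(m)$, so $\Phi_p(X) \ll_{E,p} X$ (or the sharper bound above), I would apply partial summation to $\sum_{m \le X} F_p(m) g(m)$ with $g(m) = m^{-\sigma}\log(m)^{\tau}$ up to the negligible $\varphi(m)$-correction. In case (iv), $0 < \sigma < 1$: $g$ is eventually decreasing, $\int_2^X t^{-\sigma}\log(t)^{\tau}\,dt \asymp X^{1-\sigma}\log(X)^{\tau}$, giving the claimed bound. In case (i), $\sigma > 1$ makes $\int_2^\infty t^{-\sigma}\log(t)^{\tau}\,dt$ converge, and $\sigma = 1, \tau < -1$ makes $\int_2^\infty t^{-1}\log(t)^{\tau}\,dt < \infty$ as well, so $\E_p(\infty) < \infty$. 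In case (ii), $\sigma = 1, \tau = -1$: $\int_2^X t^{-1}\log(t)^{-1}\,dt = \log\log(X) - \log\log(2)$, yielding $\E_p(X) \ll_{E,p} \log\log(X)$. In case (iii), $\sigma = 1, \tau > -1$: $\int_2^X t^{-1}\log(t)^{\tau}\,dt \asymp \log(X)^{\tau+1}$.

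The main obstacle I anticipate is purely technical: justifying that the replacement of $\varphi(m)$ by $m$ (equivalently, the factor $(m/\varphi(m))^{(p-1)/4}$, which can be as large as $(\log\log m)^{(p-1)/4}$) does not spoil the boundary cases (ii) and (iii) — and similarly that the $F_p(m) \ll_p m^{o(1)}$ losses are absorbable. In cases (i) and (iv), where the relevant exponent of $m$ is bounded away from the critical value, these losses are clearly harmless. In the boundary cases one must check that $\sum_{m \le X} F_p(m)(m/\varphi(m))^{(p-1)/4} m^{-1}\log(m)^{\tau}$ still has the stated order; this follows because $m/\varphi(m)$ is on average bounded (indeed $\sum_{m \le X}(m/\varphi(m)) \ll X$), so after partial summation the $\varphi$-correction contributes only a bounded multiplicative constant depending on $p$ — which is exactly what the $\ll_{E,p}$ notation permits. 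So the proof is: expand the definition, bound $F_p(m)$ and $m/\varphi(m)$ crudely, and run Abel summation against the elementary integral $\int^X t^{-\sigma}\log(t)^{\tau}\,dt$ in each of the four regimes.
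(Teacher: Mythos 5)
Your overall skeleton is the same as the paper's: reduce $\E_p(X)$ to a sum $\sum_{m\le X}F_p(m)\,m^{-\sigma}\log(m)^{\tau}$ and run Abel summation against the elementary integral $\int^X t^{-\sigma}\log(t)^{\tau}\,dt$ in the four regimes. The problem is the arithmetic input you propose to cite. The bound $\sum_{m\le X}F_p(m)\ll_p X/\log(X)^{1-1/(p-1)}$ is false: a conductor $m$ (essentially a product of primes $\equiv 1\pmod p$, up to a factor $p^2$) carries roughly $(p-1)^{\omega(m)}$ primitive characters of order $p$, and this multiplicity exactly restores the factor $\log(X)^{1-1/(p-1)}$ you saved by counting only the admissible integers; the true order is $\sum_{m\le X}F_p(m)\asymp_p X$ (already for $p=3$ the number of cyclic cubic fields of conductor at most $X$ grows linearly in $X$). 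Your fallback, $F_p(m)\ll_p (p-1)^{\omega(m)}\ll m^{o(1)}$ times the count of admissible integers, only gives $\sum_{m\le X}F_p(m)\ll X^{1+o(1)}$, and an $X^{o(1)}$ loss (indeed any unbounded power of $\log$) is fatal exactly in the delicate cases: (ii), (iii), and the $\sigma=1$, $\tau<-1$ part of (i). The input you actually need --- and which the paper supplies via Kubota's theorem (Theorem \ref{22}, Corollary \ref{22b}: $\sum_m \FF_p(m)m^{-s}$ is $\zeta_{\Q(\bmu_p)}(s)$ times a function holomorphic and nonvanishing on $\Re(s)>1/2$, hence has a simple pole at $s=1$) together with a Tauberian theorem --- is $\sum_{m\le X}\FF_p(m)=cX+o(X)$. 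Moreover, contrary to your parenthetical restriction to (i) and (iv), the linear bound $\ll_p X$ suffices for \emph{all four} cases once the partial summation is organized as in Lemma \ref{gr}, using that $g$ is decreasing, $-g'(t)\ll g(t)/t$, and $Xg(X)\le\int_r^X g(t)\,dt+O(1)$; no sharper-than-linear count is needed anywhere.

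A smaller but real flaw is your treatment of the factor $(m/\varphi(m))^{(p-1)/4}$ in the boundary cases: appealing to the unweighted average $\sum_{m\le X} m/\varphi(m)\ll X$ does not let you replace this factor by a constant inside a sum weighted by $F_p(m)m^{-1}\log(m)^{\tau}$ without further argument. In fact no such care is required: the exponents $\sigma$ and $\tau$ are defined precisely so that the crude pointwise bound $m\ll\varphi(m)\log(m)$ turns the summand of \eqref{Ed} into $\ll_{E,p} F_p(m)\,m^{-\sigma}\log(m)^{\tau}$ on the nose, which is exactly how the paper reduces the proposition to Lemma \ref{gr}. With that substitution and the corrected character count, your case-by-case evaluation of the integrals is correct and coincides with the paper's conclusion.
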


\begin{proof}
This follows from Lemma \ref{gr} below.  Precisely, since 
$m \ll \varphi(m)\log(m)$ (see for example \cite[Theorem 328]{h-w}), 
we have
$$
\E_p(X) \ll_{E,p}
   \sum_{m \le X} F_p(m) \frac{\log(m)^\tau}{m^{\sigma}}.
$$
Let $g(x) := \log(x)^\tau/x^\sigma$.  Then on a suitable interval $[r,\infty)$, 
$g$ satisfies the hypotheses of Lemma \ref{gr}, so that lemma shows that
$$
\E_p(X) \ll_{E,p} \int_r^X g(t) dt.
$$
Now the proposition follows.
\end{proof}

The following conjectures and predictions are suggested by the random 
matrix heuristic.  This involves work of  Conrey, Keating, Rubinstein,
and Snaith \cite{CKRS} for 
$p=2$, and David, Fearnley, and Kisilevsky \cite{DFK,DFK2}  and  
Fearnley,  Kisilevsky, and Kuwata \cite{FKK} for $p \ge 3$.  
For more precise references see Remark \ref{10.6} below.

If $p(X), q(X) : \R_{\ge r} \to \R_{>0}$ are functions for some real number $r$, we write 
$p(X) \sim q(X)$ to mean that $\lim_{X \to \infty} p(X)/q(X)$ exists and is nonzero.

\begin{conj}[\cite{DFK,DFK2, FKK}]
\label{dfkconj}
For a prime $p \ge 3$ let 
$$
n_p(X) := \#\{\chi\in\X_p(X) : L(E,\chi,1) = 0\}.
$$ 

Then there are nonzero constants $c_{E,p}$ such that
\begin{enumerate}
\item $n_3(X) \sim {\sqrt X}\log(X)^{c_{E,3}}$,
\item $n_5(X) \sim \log(X)^{c_{E,5}}$,
\item
$n_p(X)$ is bounded independently of $X$ if $p \ge 7$.
\end{enumerate}
\end{conj}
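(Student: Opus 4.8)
As this is a prediction rather than a theorem, the ``proof'' I would attempt is the random-matrix derivation behind \cite{DFK, DFK2, FKK}, assembled from three inputs: (a) an algebraicity/discreteness fact showing that a \emph{nonzero} value $L(E,\chi,1)$ cannot be arbitrarily small; (b) the random-matrix prediction for the distribution of $|L(E,\chi,1)|$ near the central point for the (unitary) family of order-$p$ twists; (c) a Selberg--Delange summation over conductors. Steps (a) and (c) are essentially rigorous; step (b) is the random-matrix leap of faith and is where the real obstacle lies. Note that Heuristic \ref{hh} together with Proposition \ref{expprop} already supplies the \emph{upper} half of the argument (granting Conjecture \ref{newspiky}); what is new here is the matching lower bound, i.e.\ the assertion ``$\sim$'' rather than merely ``$\ll$''.

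\textbf{Step (a): discreteness.} Fix $\chi$ of prime order $p\ge3$ and conductor $m$; since $\chi(-1)$ is at once a $p$-th root of unity and $\pm1$, every such $\chi$ is automatically even, so $\X_p$ is exactly the set of all primitive characters of order $p$. Let $F/\Q$ be the real cyclic field cut out by $\chi$. The number $L^{\alg}_\chi := \bar\chi(\theta_F)$, which by Corollary \ref{thetaL} equals $\delta_E\tau(\bar\chi)L(E,\chi,1)/\Omega$, is an algebraic integer in $\Z[\bmu_p]$, being the image of $\theta_F\in\Z[\Gal(F/\Q)]$ under $\bar\chi$. Applying $\Gal(\Q(\bmu_p)/\Q)$ carries it to $\overline{\chi^j}(\theta_F)$ for $j=2,\dots,p-1$, so $N_{\Q(\bmu_p)/\Q}(L^{\alg}_\chi)=\prod_{j=1}^{(p-1)/2}\bigl|\overline{\chi^j}(\theta_F)\bigr|^2$ is a nonnegative rational integer, vanishing precisely when one of the $L(E,\chi^j,1)$ does. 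When none does, this integer is $\ge1$; since $|\tau(\overline{\chi^j})|=\sqrt m$, this forces $m^{(p-1)/4}\prod_{j=1}^{(p-1)/2}|L(E,\chi^j,1)|\gg_E1$, hence --- granting, as the random-matrix model itself predicts, that the factors $|L(E,\chi^j,1)|$ with $2\le j\le(p-1)/2$ are $O_\epsilon(m^\epsilon)$ --- $|L(E,\chi,1)|\gg_E m^{-(p-1)/4-\epsilon}$. Thus $L(E,\chi,1)$ is either $0$ or at least a \emph{gap} $\delta_m$ of size $m^{-(p-1)/4}$, the very exponent appearing in Heuristic \ref{hh}. (For $p=3$ the intermediate product is empty, so $|L(E,\chi,1)|\gg_E m^{-1/2}$ unconditionally.)

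\textbf{Step (b): value distribution.} For $p\ge3$ the functional equation of $L(E,\chi,s)$ relates it to the twist by $\bar\chi\ne\chi$, so the family $\{L(E,\chi,\cdot):\chi\in\X_p\}$ has \emph{unitary} symmetry type. Following Keating--Snaith and Conrey--Keating--Rubinstein--Snaith \cite{K-S, CKRS}, as carried out for this problem in \cite{DFK, DFK2, FKK}, one models $L(E,\chi,1)$ on the value at $1$ of the characteristic polynomial of a Haar-random matrix in $U(N)$ with $N$ comparable to $\log m$; the model predicts $\Prob[|L(E,\chi,1)|<T]\sim c_E\,T\,(\log m)^{e_p}$ as $T\to0$. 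The linear-in-$T$ behaviour, with a positive finite slope, is the signature of the unitary symmetry type, in contrast with the quadratic-twist (orthogonal) case of \cite{CKRS}, where $L(1)$ is ``attracted'' to $0$ and the analogous probability behaves like $T^{1/2}$; the power $(\log m)^{e_p}$ is supplied by the random-matrix computation (the growth of negative moments of characteristic polynomials in $U(\log m)$). Taking $T=\delta_m$ and combining with Step (a), the heuristic likelihood that $L(E,\chi,1)=0$ is of order $m^{-(p-1)/4}(\log m)^{c_p}$, where $c_p$ collects this density correction together with the logarithmic factors in $\delta_m$. This is exactly the shape of the bound in Heuristic \ref{hh} with $\alpha_p=0$, and it fixes the ``right'' value of $\beta_p$ (for $p=3$, $\beta_3=3/4$).

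\textbf{Step (c), and the obstacle.} It remains to sum over $\chi$. Since every $\chi\in\X_p$ is even, $F_p(m)$ of Definition \ref{cd} equals $(p-1)^{\omega(m)}$ up to bounded factors --- $\omega(m)$ being the number of distinct prime factors of $m$ --- and is supported on $m$ all of whose prime factors are $\equiv1\pmod p$; the associated Dirichlet series has a simple pole at $s=1$, so $\sum_{m\le X}F_p(m)\sim\kappa_pX$ by Selberg--Delange. Partial summation then gives $$n_p(X)\ \sim\ \kappa'_p\sum_{m\le X}F_p(m)\,m^{-(p-1)/4}(\log m)^{c_p},$$ which splits into three regimes according as $(p-1)/4$ is $<1$, $=1$, or $>1$: for $p=3$ one gets $n_3(X)\sim\sqrt X\,(\log X)^{c_{E,3}}$; for $p=5$ one gets $n_5(X)\sim(\log X)^{c_{E,5}}$; and for $p\ge7$ the series converges, so $n_p(X)=O(1)$. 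These are precisely cases (iv), (iii), and (i) of Proposition \ref{expprop} applied to $\E_p(X)$ with $\alpha_p=0$, the log-exponents $c_{E,p}$ being the values computed from the relevant random-matrix integrals in \cite{DFK2, FKK}. The one genuinely hard step is (b): the machinery of this paper yields only the \emph{upper} bound $\Prob[L(E,\chi,1)=0]\ll m^{-(p-1)/4}(\log m)^{c_p}$, and even that conditionally on Conjecture \ref{newspiky}, whereas the conjectured asymptotic needs the matching \emph{lower} bound --- that $|L(E,\chi,1)|$ genuinely clusters down near the gap $\delta_m$ with the predicted positive frequency. Proving this would require a rigorous handle on the value distribution of $L(E,\chi,1)$ at the central point, beyond current technology; it is open already for the far more heavily studied quadratic-twist analogue treated in \cite{CKRS}.
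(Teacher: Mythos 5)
Conjecture \ref{dfkconj} is not proved in the paper at all: it is quoted from \cite{DFK,DFK2,FKK} as a prediction coming from random matrix theory, and the paper's only engagement with it is the consistency check in the remarks following Remark \ref{10.6}, where choosing $\alpha_p=0$ and suitable $\beta_p$ in Conjecture \ref{newspiky} makes the paper's own upper-bound heuristic (Heuristic \ref{hh} plus Proposition \ref{expprop}) reproduce exactly the shapes $X^{1/2}\log(X)^{c_{E,3}}$, $\log(X)^{c_{E,5}}$, and $O(1)$. Your proposal correctly treats the statement as a prediction and reconstructs the route of the cited papers rather than the route of this one: the norm/discreteness argument for $\bar\chi(\theta_F)\in\Z[\bmu_p]$ giving a gap of size roughly $m^{-(p-1)/4}$ (modulo your honestly flagged conditional control of the conjugate values $L(E,\chi^j,1)$), the unitary-family random-matrix model for the value distribution near the central point, and the Kubota-type conductor count followed by partial summation. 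That is a fair account of where the conjecture comes from, and it is genuinely complementary to the paper's approach: the paper's $\theta$-coefficient heuristic only ever produces upper bounds ($\E_p(X)$ as a ceiling on $n_p(X)$), whereas the asymptotic ``$\sim$'' in (i) and (ii) requires the lower-bound half, which you correctly identify as resting entirely on the unproved random-matrix value-distribution input. Your sketch also makes visible the pleasant numerical match between the norm-discretization exponent $m^{(p-1)/4}$ and the exponent $(p-1)/4$ in Heuristic \ref{hh}, which the paper leaves implicit. Minor caveats: the precise logarithmic exponents are, as the authors of \cite{DFK2} themselves caution, sensitive to the discretisation, so your assertion that step (b) ``fixes'' $\beta_3=3/4$ should be read as the particular normalization adopted in the remark after Conjecture \ref{newspiky}, not as something your steps (a)--(c) derive; and for (iii) boundedness rather than mere convergence of the expected count is again a heuristic reading, exactly as in the paper's Heuristic \ref{heurist}. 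None of this is a gap in the usual sense, since no proof exists to compare against; as a reconstruction of the cited heuristic your account is accurate and appropriately candid about which step is beyond current technology.
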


\begin{rem}
\label{10.6}
Assertion (iii) is part of \cite[Conjecture 1.2]{DFK2}.  

The statements (i) and (ii) are implicit in \cite[p. 256]{DFK2} but 
not explicitly stated as conjectures.  The authors of \cite{DFK2} comment:
\begin{quote}  \sl The exact power of log X that is obtained with the
random matrix approach depends subtly on the discretisation, and is
difficult to predict.\end{quote}
They state a weaker conjecture:
\begin{conj}[Conjecture 1.2 of \cite{DFK2}] 
~
\begin{enumerate}
\item 
$\lim_{X \to \infty}\log n_3(X)/\log(X) = 1/2.$
\item
$n_5(X)$ is unbounded and $\ll X^\epsilon$  as $X$ tends to infinity, 
for any $\epsilon >0$.
\end{enumerate}
\end{conj}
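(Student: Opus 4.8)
The plan is to treat this statement as the qualitative shadow of the David--Fearnley--Kisilevsky random matrix recipe, whose content can be read off from the estimates already assembled in Sections \ref{h_prime}--\ref{h_general}. The first step is to record the arithmetic ``quantization gap.'' By Theorem \ref{BS} and Corollary \ref{thetaL}, for a primitive even character $\chi$ of prime order $p$ and conductor $m$ the quantity $\bar\chi(\theta_F) = \delta_E\tau(\bar\chi)L(E,\chi,1)/\Omega$ lies in $\Z[\bmu_p]$ and vanishes exactly when $L(E,\chi,1)$ does. If it does not vanish, then the product of $\bar\chi(\theta_F)$ with its Galois conjugates is a nonzero rational integer, hence of absolute value at least $1$; since $|\tau(\bar\chi)| = \sqrt m$ and, by convexity, each archimedean conjugate $L(E,\chi^j,1)$ is $O_{E,\epsilon}(m^{1/2+\epsilon})$, this forces $|L(E,\chi,1)| \gg_{E,\epsilon} m^{-1/2-\epsilon}$ for the nonzero values. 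Equivalently, as in Lemma \ref{eqth} and the discussion preceding Heuristic \ref{hh}, vanishing of $L(E,\chi,1)$ is the event that $(p-1)/2$ independent integer $\theta$-coefficients each land in a prescribed interval of length $\asymp m^{-1/2}$.

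The second step is the probabilistic model. The random matrix heuristic for the family of order-$p$ twists of $E$ (as in \cite{DFK2}) predicts that each of these $(p-1)/2$ discretized events has ``likelihood'' $\asymp m^{-1/2}$ up to a power of $\log m$ coming from the arithmetic factor in the conjectural moment --- which is exactly the shape built into Conjecture \ref{newspiky} and Heuristic \ref{hh} with $\alpha_p = 0$ --- so the joint likelihood of vanishing is $\asymp m^{-(p-1)/4}$ times a power of $\log m$. Summing over conductors, and using that the number of even primitive order-$p$ characters of conductor at most $X$ is of order $X$, one obtains, exactly as in Proposition \ref{expprop}: $n_3(X) \asymp X^{1/2}(\log X)^{c}$, which gives (i); $n_5(X) \asymp (\log X)^{c+1}$, which is unbounded and $\ll X^\epsilon$, giving (ii); and for $p \ge 7$ a convergent sum, consistent with part (iii) of Conjecture \ref{dfkconj}. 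The $\ll X^\epsilon$ half of (ii) and the $\le 1/2$ half of (i) are, modulo Conjecture \ref{newspiky}, consequences of Heuristic \ref{heurist} and Proposition \ref{expprop} as stated.

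The main obstacle is everything in the lower-bound direction: to prove $\lim_{X\to\infty}\log n_3(X)/\log X = 1/2$ one needs $n_3(X)$ genuinely as large as $X^{1/2}$, and for (ii) one needs $n_5(X)$ actually unbounded, and at present it is not even known that infinitely many cubic (resp.\ quintic) Dirichlet twists of a given $E$ have vanishing central value. A rigorous treatment would therefore require either an unconditional construction of such characters --- for instance producing the needed $3$- or $5$-divisibility in twisted Selmer groups in the spirit of \cite{MR} --- or a sufficiently strong form of the moment conjectures for the family $\{L(E,\chi,s) : \chi \in \X_p\}$ to upgrade the heuristic asymptotic to a theorem. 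Pinning down the exact exponent $c_{E,3}$ of $\log X$ in (i) is a further, genuinely delicate point, since (as \cite{DFK2} emphasizes) it depends on the fine structure of the discretization rather than on the leading-order random matrix behavior.
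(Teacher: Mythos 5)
There is a genuine gap here, and it is structural rather than a fixable detail: the statement you are asked about is Conjecture 1.2 of \cite{DFK2}, quoted verbatim inside Remark \ref{10.6}. The paper does not prove it and does not claim to; it only observes (in the remark following Conjecture \ref{dfkconj}) that for suitable choices of $\alpha_p,\beta_p$ its own conditional upper bounds are \emph{consistent} with it. Your proposal is, in substance, that same consistency check, and it cannot be upgraded to a proof by the machinery you invoke. Everything in Sections \ref{h_prime}--\ref{analyticsect} --- Conjecture \ref{newspiky}, Heuristics \ref{hh}, \ref{h2}, \ref{heurist}, and Proposition \ref{expprop} --- produces only \emph{upper} bounds: the paper is explicit that its heuristic likelihoods are ``upper bounds for heuristic likelihood,'' and Proposition \ref{expprop} gives $\ll$, never $\asymp$. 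Your step ``$n_3(X)\asymp X^{1/2}(\log X)^c$, exactly as in Proposition \ref{expprop}'' silently converts an upper bound into a two-sided asymptotic; no statement in the paper, nor in your argument, supplies the lower bounds $n_3(X)\ge X^{1/2-\epsilon}$ or the unboundedness of $n_5(X)$, which are precisely the nontrivial content of parts (i) and (ii) beyond what Heuristic \ref{heurist} already suggests. Indeed it is not known unconditionally that even one more cubic or quintic twist with $L(E,\chi,1)=0$ exists beyond any finite list, and you concede exactly this in your final paragraph --- which is an admission that what precedes it is a plausibility argument, not a proof.

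Two smaller points. First, your ``quantization gap'' estimate is off: if $\bar\chi(\theta_F)\ne 0$, taking the norm from $\Q(\bmu_p)$ and using $|\tau(\bar\chi)|=\sqrt m$ together with a convexity bound on the conjugate $L$-values gives a lower bound for $|L(E,\chi,1)|$ that degrades with $p$ (one divides by a product over all $p-2$ nontrivial conjugates), not the uniform $\gg_{E,\epsilon} m^{-1/2-\epsilon}$ you state; the paper avoids this entirely by working with the integrality of the $\theta$-coefficients $c_{F,\gamma}$ themselves (Lemma \ref{eqth} and the justification of Heuristic \ref{hh}), which is the cleaner discretization. Second, the exponents $c_{E,3}$, $c_{E,5}$ in Conjecture \ref{dfkconj}(i)--(ii) are, as \cite{DFK2} stresses and as the paper quotes, sensitive to the discretization and are not pinned down by either the random matrix recipe or by Conjecture \ref{newspiky}; the correct posture toward the quoted statement is the paper's own, namely that it is an externally sourced conjecture against which the heuristic of Heuristic \ref{heurist} and Proposition \ref{expprop} is checked for compatibility, not derived.
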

\end{rem}

\begin{rem}      
See \cite{FKK} for further interesting results related to the above conjecture for cubic characters.
Fearnley and  Kisilevsky  \cite{FK} exhibit one example of $L(E,\chi,1)=0$ with 
$\chi$ a character of order $\ell = 11$ and $E$ the elliptic curve $5906B1$ (using 
Cremona's classification \cite{cremona}).  The character $\chi$ is of conductor $23$ (i.e., $\chi$ 
has the smallest possible conductor for characters of its order).  
In this example $\rk(E(\Q)) = 2$, and $\rk(E(\Q(\bmu_{23})^+)) = 12$.
\end{rem}
   
\begin{rem}
Suppose $p=3$, $\alpha_3 = 0$, and $\beta_3 = c_{E,3}$.
Then Proposition \ref{expprop}(iv) gives
$$
\E_3(X) \ll X^{1/2} \log(X)^{c_{E,3}}.
$$

Now suppose $p=5$, $\alpha_5 = 0$, and $\beta_5 = (c_{E,5}-1)/2$.
Then Proposition \ref{expprop}(iii) gives
$$
\E_5(X) \ll \log(X)^{c_{E,5}}.
$$

Now suppose $p\ge 7$ and $\alpha_p < (p-5)/(2(p-1))$.  
Then Proposition \ref{expprop}(i) shows that $\E_p(\infty)$ is finite.

In other words, for appropriate values of $\alpha$ and $\beta$ (that are consistent with 
computational data), Heuristic \ref{heurist} is consistent with Conjecture \ref{dfkconj}.
\end{rem}

\begin{prop}
\label{basic}
We have
$$
\sum_{d \,:\, \varphi(d)(1-2\alpha_d)>4} \E_d(\infty) < \infty.
$$
\end{prop}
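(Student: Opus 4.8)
The plan is to show first that every term $\E_d(\infty)$ is finite, and then that all but finitely many of them are bounded by a summable quantity such as $d^{-2}$. Throughout, for $d$ in the index set we have $\varphi(d)(1-2\alpha_d)>4$, so in particular $1-2\alpha_d>0$; abbreviate $e_d:=\varphi(d)/4$ and $\sigma_d:=(1-2\alpha_d)e_d>1$. The first step is to normalize the summand: using the sharp bound $\varphi(m)\gg m/\log\log m$ for $m\ge 3$ (see \cite[Theorem 328]{h-w}) --- where it is essential to keep the full factor $\log(m)^{1-2\beta_d}$ in the denominator rather than absorbing it as in the proof of Proposition \ref{expprop} --- and absorbing the resulting constant into $B_E$, one gets
$$
\E_d(\infty)\ \ll\ \sum_{m}F_d(m)\,\Bigl(B_E^2\,d\,\frac{m^{2\alpha_d}\,\log\log(3m)}{m\,\log(m)^{1-2\beta_d}}\Bigr)^{e_d}.
$$
Only conductors $m$ of even primitive characters of order exactly $d$ contribute, and any such $d$ divides the exponent of $(\Z/m\Z)^\times$, which is less than $m$; hence $F_d(m)=0$ unless $m>d$.

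Next I would control the number of characters. Writing $G_d(m)$ for the number of primitive Dirichlet characters mod $m$ of order dividing $d$, one has $F_d(m)\le G_d(m)$, the count $G_d(m)$ is multiplicative in $m$, and the local count at a prime power $\ell^k$ is at most $\min(d,\varphi(\ell^k))<\min(d,\ell^k)$. Hence the Dirichlet series $\sum_m G_d(m)m^{-s}=\prod_\ell\bigl(1+\sum_{k\ge1}G_{\ell^k}(d)\ell^{-ks}\bigr)$ has each Euler factor bounded by $1+2\ell^{1-s}+2d\,\ell^{-s}$, and a routine estimate gives $\sum_m G_d(m)m^{-s}\ll 1$ for every $s\ge4$, uniformly in $d$; for a fixed $d$ one also has the crude bound $G_d(m)\le d^{\omega(m)}\ll_{d,\varepsilon}m^\varepsilon$. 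Combining the crude bound with $\sigma_d>1$ shows, for each individual $d$ in the index set, that $\E_d(\infty)<\infty$; and since $\{d:\varphi(d)\le M\}$ is finite for every $M$, the partial sum over those $d$ in the index set with $\varphi(d)\le M$ is finite. Fix $M$ large (depending on $E$); it remains to treat $d$ with $\varphi(d)>M$.

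For $\varphi(d)>M$ with $M$ large enough (using that $\alpha_d\varphi(d)$ is bounded and $\beta_d\to0$) we have $|\alpha_d|<\tfrac18$ and $\beta_d<\tfrac18$; then the function $h_d(m):=\log\log(3m)/\log(m)^{1-2\beta_d}$ is decreasing on $[d,\infty)$, and $\eta_d:=h_d(d)=\log\log(3d)/\log(d)^{1-2\beta_d}\to0$ as $d\to\infty$. Hence for $m>d$ the bracketed quantity above is at most $B_E^2\,d\,\eta_d\,m^{-(1-2\alpha_d)}$, so by the Euler estimate with $s=4$ (legitimate since $\sigma_d>4$ and $m>d$),
$$
\E_d(\infty)\ \ll\ (B_E^2\,d\,\eta_d)^{e_d}\sum_{m>d}G_d(m)\,m^{-\sigma_d}
\ \le\ (B_E^2\,d\,\eta_d)^{e_d}\,d^{\,4-\sigma_d}\sum_m G_d(m)\,m^{-4}
\ \ll\ d^{4}\,(B_E^2\,d\,\eta_d)^{e_d}\,d^{-\sigma_d}.
$$
Since $\alpha_d\to0$ we have $d^{\,e_d-\sigma_d}=\bigl(d^{2\alpha_d}\bigr)^{e_d}\le 2^{e_d}$ for $d$ large, so $\E_d(\infty)\ll d^{4}(2B_E^2\eta_d)^{e_d}$; and as $\eta_d\to0$ the base is eventually at most $\tfrac12$, giving $\E_d(\infty)\ll d^{4}\,2^{-\varphi(d)/4}$, which is $\ll d^{-2}$ because $\varphi(d)\to\infty$. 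Summing over $d$ with $\varphi(d)>M$ then gives a finite total, which together with the finite partial sum from the previous paragraph proves the proposition.

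The main obstacle is the balancing act in the last paragraph: the constant $(B_E^2 d)^{e_d}$ can be large (when $B_E>1$), and the only decay available to beat it comes from the conductor, which is only guaranteed to exceed $d$; since $\sigma_d\approx e_d$, the factor $m^{-\sigma_d}$ summed over $m>d$ contributes only about $d^{-\sigma_d}\approx (B_E^2 d)^{-e_d}$, which cancels the constant exactly and leaves no margin. The margin is supplied by $\eta_d\to0$, which in turn comes precisely from the $\log(m)^{1-2\beta_d}$ in the denominator of the summand together with the sharp lower bound for $\varphi(m)$ and the facts $\alpha_d\to0$, $\beta_d\to0$. Carrying these logarithmic and $\varphi$-factors faithfully, instead of discarding them as in Proposition \ref{expprop}, is the crux of the argument.
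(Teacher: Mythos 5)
Your argument is correct in substance, but it is not the paper's route. The paper deduces Proposition \ref{basic} in one line from the general Proposition \ref{lemprop2} (taking $t(d)=\varphi(d)/4$, $\sigma_d=2\alpha_d$, $\tau_d=2\beta_d$, $M=B_E^2$), whose proof shares your skeleton: restrict to $m>d$, use $\varphi(m)\log\log m\gg m$ (Lemma \ref{hw2}), and defeat the factor $(B_E^2 d)^{\varphi(d)/4}$ by the superpolynomial decay of $\bigl(\log\log d/\log(d)^{1/2}\bigr)^{\varphi(d)/4}$, which works because $\varphi(d)\gg\log d$ --- exactly the role of your $\eta_d^{e_d}$, and, as you say, the reason one must keep the $\log(m)^{1-2\beta_d}$ that Proposition \ref{expprop} discards. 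The execution differs in the two auxiliary inputs. For the character count the paper uses only the crude bound $F_d(m)\le|\Hom((\Z/m\Z)^\times,\bmu_d)|<m$ and compensates by demanding the exponent $\varphi(d)(1-2\alpha_d)/4>3$ for large $d$, so that $\sum_{m>d}m^{1-\varphi(d)(1-2\alpha_d)/4}\ll d^{2-\varphi(d)(1-2\alpha_d)/4}$; you instead prove the sharper uniform-in-$d$ bound $\sum_m F_d(m)m^{-4}=O(1)$ via an Euler product and use your $\sigma_d>4$. For the finitely many small $d$ the paper invokes Kubota's theorem (Theorem \ref{22} via Corollary \ref{s1c}), while you use the elementary estimate $F_d(m)\ll_{d,\varepsilon}m^{\varepsilon}$. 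Your version is more self-contained; the paper's abstraction in Proposition \ref{lemprop2} is designed to be reusable for other intra-correlation exponents $t(d)\gg\log(d)$ (Remark \ref{intra}).

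One step needs repair as written. The assertion $\sum_m G_d(m)m^{-s}\ll 1$ uniformly in $d$ for $s\ge 4$ is true, but it does not follow from the displayed Euler-factor bound $1+2\ell^{1-s}+2d\ell^{-s}$: every prime $\ell\le(2d)^{1/s}$ contributes a factor at least $2$ to the product of these bounds, so that product exceeds $2^{\pi((2d)^{1/s})}$ and is unbounded in $d$; and a loss of size $e^{cd}$ here is of the same exponential order as the gain you extract from $\eta_d^{e_d}$ (via $\varphi(d)\log\log d\gg d$), so the point is not cosmetic. The fix is already contained in the local bound you state: for $\ell\le d$ use $G_d(\ell^k)\le\varphi(\ell^k)<\ell^k$, giving a factor $1+O(\ell^{1-s})$ whose product over all $\ell$ is an absolute constant, and reserve the bound $O(d\ell^{-s})$ for $\ell>d$, where $\prod_{\ell>d}\bigl(1+O(d\ell^{-s})\bigr)\le\exp\bigl(O(d^{2-s})\bigr)=O(1)$. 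Two micro-slips: at $\ell=2$ the group $(\Z/2^k\Z)^\times$ is not cyclic, so the local count can exceed $\min(d,\varphi(2^k))$ by a factor of $2$ (harmless); and the final inequality $d^4 2^{-\varphi(d)/4}\ll d^{-2}$ requires $\varphi(d)\gg\log d$ (true, e.g.\ $\varphi(d)\ge\sqrt{d/2}$), not merely $\varphi(d)\to\infty$.
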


\begin{proof}
This follows from Proposition \ref{lemprop2} below (and comparing equations 
\eqref{Ed} and \eqref{Ed2})  taking  
$$
t(d) := \varphi(d)/4, \quad 
   \sigma_d := 2\alpha_d, \quad
   \tau_d := 2\beta_d, \quad
   M := B_E^2.  
$$
\end{proof}

\section{Conjectures}\label{CONJ}
Proposition \ref{basic}, Heuristic \ref{heurist}, and Conjecture \ref{newspiky} 
lead to the following ``analytic" and ``arithmetic'' conjectures.  
\begin{conj}
\label{bigc}
Let
$$
\X = \bigcup_{\varphi(d)>4} \X_d
$$
(the set of all even Dirichlet characters of order at least $7$ and different 
from $8$, $10$, or $12$).
Then the set
$
\{ \chi \in \X : L(E,\chi,1) = 0\}
$
is finite. 
\end{conj}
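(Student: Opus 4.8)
Since the statement is a heuristic consequence of the framework developed above rather than a theorem, what I would write is a \emph{justification} in the spirit of the justifications of Heuristics \ref{hh}, \ref{h2} and \ref{heurist}. The plan is to apply the general form of Heuristic \ref{heurist} to the index set $D := \{d : \varphi(d) > 4\}$, for which $\bigcup_{d\in D}\X_d = \X$. This gives, for every $X$,
$$
\#\{\chi \in \X : L(E,\chi,1) = 0,\ \cond(\chi) \le X\} \;\le\; \sum_{d\in D}\E_d(X) \;\le\; \sum_{d\in D}\E_d(\infty),
$$
so it suffices to prove $\sum_{d\in D}\E_d(\infty) < \infty$: the count on the left is then bounded by a constant independent of $X$, hence the full set $\{\chi\in\X : L(E,\chi,1)=0\}$ is finite, which is the assertion.

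To prove $\sum_{d\in D}\E_d(\infty)<\infty$ I would invoke Proposition \ref{basic}. Put $D_0 := \{d : \varphi(d)(1-2\alpha_d) > 4\}$; Proposition \ref{basic} (via Proposition \ref{lemprop2}, with $t(d)=\varphi(d)/4$, $\sigma_d=2\alpha_d$, $\tau_d=2\beta_d$, $M=B_E^2$, comparing \eqref{Ed} with \eqref{Ed2}) gives $\sum_{d\in D_0}\E_d(\infty)<\infty$, so I only have to compare $D$ with $D_0$. By Conjecture \ref{newspiky}(ii) there is a constant $C$ with $\alpha_d\varphi(d)\le C$ for all $d$, hence $\varphi(d)(1-2\alpha_d) = \varphi(d) - 2\alpha_d\varphi(d) \ge \varphi(d) - 2C$, which exceeds $4$ once $\varphi(d) > 2C+4$; thus $D\setminus D_0$ lies in the finite set $\{d : 4 < \varphi(d) \le 2C+4\}$. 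For each of these finitely many $d$ one checks $\E_d(\infty)<\infty$ directly: using $m\ll\varphi(m)\log(m)$ (as in the proof of Proposition \ref{expprop}) and the elementary bound $F_d(m)\ll_{d,\epsilon} m^{\epsilon}$, equation \eqref{Ed} yields
$$
\E_d(\infty) \;\ll_{E,d}\; \sum_{m\ge 1}\frac{m^{\epsilon}\,\log(m)^{\beta_d\varphi(d)/2}}{m^{(1-2\alpha_d)\varphi(d)/4}},
$$
which converges whenever $(1-2\alpha_d)\varphi(d)>4$; since $\varphi(d)\ge 6$ for $d\in D$, this holds under the data-supported expectation (see Remark \ref{for.data}) that $\alpha_d$ is small — e.g.\ $\alpha_d=0$, or more weakly $\alpha_d<(\varphi(d)-4)/(2\varphi(d))$ — in which case in fact $D\subseteq D_0$ and this step is unnecessary. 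Either way $\sum_{d\in D}\E_d(\infty)<\infty$, completing the argument.

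The only step carrying genuine mathematical content — everything else is bookkeeping — is the convergence asserted in Proposition \ref{basic}, namely that the double sum over $d$ and over conductors $m$ converges. It does because the ``intra-correlation'' exponent $\varphi(d)/4$ grows with $d$, so the inner sum over $m$ decays like a large negative power of the conductor; the delicate point is that this growth not be cancelled by growth of $\alpha_d$, which is exactly why Conjecture \ref{newspiky}(ii) demands that $\alpha_d\varphi(d)$ be bounded and $\beta_d\to 0$. The deeper obstacle, of course, is that the entire argument is conditional on Conjecture \ref{newspiky} together with the independence heuristics (H1), (H2) of \S\ref{h_prime}; making any of this unconditional — even the far weaker Conjecture \ref{dfkconj} for a single order $p\ge 7$ — is well beyond current reach.
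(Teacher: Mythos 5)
Your route is exactly the paper's: Conjecture \ref{bigc} is not proved there but is presented as the outcome of combining Heuristic \ref{heurist} (applied to $D=\{d:\varphi(d)>4\}$) with the convergence in Proposition \ref{basic}, via the interpretation of heuristic likelihoods in Remark \ref{prob} --- precisely your chain of reasoning, conditional on Conjecture \ref{newspiky} and (H1), (H2). Your explicit handling of the mismatch between $\varphi(d)>4$ and $\varphi(d)(1-2\alpha_d)>4$ (finitely many possible exceptional $d$, needing $\alpha_d$ small) is a point the paper leaves implicit, addressed only through Remark \ref{for.data} and the remark after Conjecture \ref{conj235} allowing ``possibly a larger set of exceptional degrees.''
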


\begin{conj}\label{conj235}
Suppose $F/\Q$ is an (infinite) real abelian extension  that 
has only finitely many subfields of degree $2$, $3$, or $5$.  Then
$E(F)$ is finitely generated.
\end{conj}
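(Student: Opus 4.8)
The plan is to derive Conjecture~\ref{conj235} from Conjecture~\ref{bigc} (equivalently, from Proposition~\ref{basic} via Heuristic~\ref{heurist}), together with the Birch and Swinnerton-Dyer conjecture for $E$ over finite abelian extensions of $\Q$ --- or, what is already enough for the rank bound, the easy inequality ``algebraic rank $\le$ order of vanishing'' for the twisted $L$-functions $L(E,\chi,s)$. Writing $E(F)=\varinjlim_{F'}E(F')$ over the finite subextensions $F'/\Q$ of $F$, Proposition~\ref{bsdcor} gives in the limit
$$
\rk E(F) \;=\; \rk E(\Q) \;+\; \sum_{\pile{\chi:\Gal(F/\Q)\to\C^\times}{\chi\ne 1}}\ord_{s=1}L(E,\chi,s),
$$
the sum being over the (automatically finite order) continuous characters of the profinite group $\Gal(F/\Q)$. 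Since each term is a nonnegative integer, and $E(\Q^{\mathrm{ab}})_{\tors}$ --- hence $E(F)_{\tors}$ --- is known to be finite (a consequence of Mazur's and Kenku's bounds on rational isogenies of elliptic curves over $\Q$), it suffices to prove that \emph{only finitely many characters $\chi$ of $\Gal(F/\Q)$ satisfy $L(E,\chi,1)=0$}. Granting this, one chooses a finite $F_1\subseteq F$ containing the cyclic fields cut out by these finitely many $\chi$ and containing a field of definition for the finite group $E(F)_{\tors}$; then $\rk E(F')=\rk E(F_1)$ and $E(F')_{\tors}=E(F_1)_{\tors}$ for every finite $F'$ with $F_1\subseteq F'\subseteq F$, and a control-type argument upgrades this to the finite generation of $E(F)$.

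To count the relevant characters I would split them by order $d$. The ``dangerous'' orders are those with $\varphi(d)\le 4$, namely $d\in\{1,2,3,4,5,6,8,10,12\}$, and the point of the hypothesis on $F$ is precisely to control these. There are finitely many characters of $\Gal(F/\Q)$ of order $2$ (they correspond bijectively to the quadratic subfields of $F$), and likewise finitely many of order $3$ and of order $5$ (corresponding two-to-one, resp.\ four-to-one, to the cyclic cubic, resp.\ cyclic quintic, subfields), all by hypothesis. The remaining dangerous orders follow by bootstrapping: a character of order $4$ has square of order $2$, and two order-$4$ characters with the same square differ by a character of order dividing $2$, so there are only finitely many; a character of order $8$ has square of order $4$, so likewise finitely many; and a character of order $6$ (resp.\ $10$, resp.\ $12$) is injectively recorded by the pair formed by its cube and its square (resp.\ its fifth power and its square, resp.\ its cube and its fourth power), hence again only finitely many of each. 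Therefore $\Gal(F/\Q)$ has only finitely many characters of order $d$ with $\varphi(d)\le 4$, \emph{a fortiori} only finitely many such with $L(E,\chi,1)=0$.

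For the characters of order $d$ with $\varphi(d)>4$ --- that is, of order $\ge 7$ and different from $8,10,12$ --- Conjecture~\ref{bigc} asserts outright that, for the fixed curve $E$, there are only finitely many Dirichlet characters of this kind with $L(E,\chi,1)=0$; in particular only finitely many of them are characters of $\Gal(F/\Q)$. Adding the two counts gives the reduction of the previous paragraph, and hence --- granting BSD and Conjecture~\ref{bigc} --- the conjecture.

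The main obstacle is that this is a conditional derivation rather than a proof: the argument above is essentially bookkeeping once one has Conjecture~\ref{bigc}, but that conjecture is itself only heuristically supported --- it rests on the distributional Conjecture~\ref{newspiky} for $\theta$-coefficients, the ``heuristic likelihood'' formalism of Remark~\ref{prob}, and the convergence $\sum_d \E_d(\infty)<\infty$ of Proposition~\ref{basic} --- and is at present far out of reach (even the cubic case, Conjecture~\ref{dfkconj}(i), is open). A secondary, more technical worry is the last step: for a general infinite abelian $F$ one must argue with some care that bounded rank together with finite torsion forces finite generation of $E(F)$, rather than, say, an infinitely $\ell$-divisible point; for the fields actually named after Conjecture~\ref{conj235} (cyclotomic $\Z_\ell$-extensions, their composita, and maximal abelian $\ell$-extensions with $\ell\ge 7$) this is classical or follows from Iwasawa-theoretic control theorems.
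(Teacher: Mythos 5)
Your proposal is correct and follows essentially the same route as the paper: the paper's own justification is exactly this conditional derivation from Conjecture \ref{bigc} together with the Birch and Swinnerton-Dyer conjecture, reducing via Proposition \ref{bsdcor} to the finiteness of characters $\chi$ of $\Gal(F/\Q)$ with $L(E,\chi,1)=0$ and disposing of the small orders $d$ with $\varphi(d)\le 4$ by the same bookkeeping with squares, cubes and fifth powers. Your additional care about order-$4$ characters, torsion finiteness over $\Q^{\ab}$, and the passage from bounded rank to finite generation only fills in details the paper leaves implicit.
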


\noindent
{\em Conjecture  \ref{conj235} would be a consequence of Conjecture \ref{bigc} 
and the Birch and Swinnerton-Dyer conjecture.}  
As in Proposition \ref{bsdcor}, if the Birch and Swinnerton-Dyer conjecture
holds then it is enough to show that there are only finitely many 
characters $\chi$ of $\Gal(F/\Q)$ such that $L(E,\chi,1) = 0$.  
But the hypotheses in Conjecture \ref{conj235} imply that $\Gal(F/\Q)$ 
has only finitely many characters of order $2$, $3$, or $5$, 
and consequently it has only finitely many characters of order $6$, $8$, $10$, or $12$ as well. 
Now the conjecture follows from Conjecture \ref{bigc}.
\hfill \qedsymbol

\begin{rem}  As mentioned in Remark \ref{intra}, our heuristic---following 
Proposition \ref{lemprop2}---would suggest qualitatively similar conjectures
(possibly with a larger set of exceptional degrees $d$)
even if there were significant ``intra-correlation".
 \end{rem}

\section{Analytic results}
\label{analyticsect}

\begin{defn}
\label{sumd}
For every $d \ge 1$, recall (Definition \ref{cd}) that $F_d(m)$
denotes the number of even primitive Dirichlet characters of order $d$ and conductor $m$.
We are interested in sums of the form 
$$
S_d(g;X) := \sum_{m=r}^X F_d(m) g(m)
$$
where $g : \R_{\ge r} \to \R_{\ge 0}$ is a function, and we let $S_d(g) = S_d(g;\infty)$.

Let $\FF_d(m)$ denote the number of {\em all} primitive Dirichlet characters of 
order dividing $d$ and conductor $m$.  The following theorem is essentially due to Kubota \cite{K}.
\end{defn}

\begin{thm}[Kubota]
\label{22}
For every $d \ge 1$, define Dirichlet series
$$
L_d(s) := \sum_{m=1}^\infty {\FF_d(m)}{m^{-s}}, \quad 
   Z_d(s) := \prod_{t \mid d, t>1} \zeta_{\Q(\bmu_t)}(s)
$$
where $\zeta_{\Q(\bmu_t)}$ denotes the Dedekind $\zeta$-function of the 
cyclotomic field $\Q(\bmu_t)$.  
Then $L_d(s)/Z_d(s)$ has an analytic continuation to the half plane $\Re(s) > 1/2$ 
and is nonzero at $s=1$.
\end{thm}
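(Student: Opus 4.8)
The plan is to count primitive Dirichlet characters of order dividing $d$ by first parametrizing \emph{all} characters of order dividing $d$ (not necessarily primitive), then sieving out the imprimitive ones, and to translate the counting problem into a statement about zeta functions of abelian extensions of $\Q$. First I would observe that a Dirichlet character of order dividing $d$ is the same datum as a continuous homomorphism $\chi:\Gal(\bar\Q/\Q)\to\bmu_d$, equivalently a cyclic extension $K_\chi/\Q$ of degree dividing $d$ together with a choice of injection $\Gal(K_\chi/\Q)\hookto\bmu_d$. The conductor of $\chi$ in the Dirichlet sense equals the conductor of $K_\chi/\Q$ in the sense of the conductor–discriminant formula, so $\FF_d(m)$ counts (with the appropriate multiplicity coming from the $\varphi(t)/t$-type overcounting of injections) the cyclic subextensions of $\Q$ of given conductor $m$ and degree $t\mid d$, $t>1$. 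Grouping by $t$ and using that the number of characters of exact order $t$ and conductor $m$ is what is being summed, one gets that the generating Dirichlet series $L_d(s)=\sum_m \FF_d(m)m^{-s}$ should match, up to a factor holomorphic and nonvanishing on $\Re(s)\ge 1/2+\epsilon$, the product $\prod_{t\mid d,\,t>1}\zeta_{\Q(\bmu_t)}(s)$.

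The key identity to exploit is the factorization of the Dedekind zeta function of an abelian field: for the cyclotomic field $\Q(\bmu_t)$ one has $\zeta_{\Q(\bmu_t)}(s)=\prod_{\psi}L(\psi,s)$, the product over all Dirichlet characters $\psi$ of order dividing $t$, where $L(\psi,s)$ is the primitive Dirichlet $L$-function attached to $\psi$. Thus $Z_d(s)=\prod_{t\mid d,\,t>1}\prod_{\psi:\,\ord(\psi)\mid t}L(\psi,s)$. Next I would reorganize this double product by the exact order of $\psi$: a character $\psi$ of exact order $e>1$ appears once for each $t$ with $e\mid t\mid d$, i.e.\ with multiplicity $\#\{t:e\mid t\mid d\}$, a purely combinatorial quantity depending only on $e$ and $d$. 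On the other side, the Euler product of $L_d(s)$ is a product over primes $p$ of a local factor recording $\FF_d(p^k)$; comparing Euler factors prime by prime, the primitive-character count at $p$ matches the corresponding local contribution of $Z_d(s)$ up to bounded local factors. The upshot is that $L_d(s)/Z_d(s)$ is an Euler product whose local factor at $p$ is $1+O(p^{-2s})$ uniformly, hence converges absolutely and is nonzero for $\Re(s)>1/2$; evaluating at $s=1$ gives a nonzero value.

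The main obstacle, and the step requiring genuine care rather than bookkeeping, is the precise passage from \emph{non-primitive} characters (which is what the zeta-function side naturally produces, via the imprimitive $L$-factors hidden inside $\zeta_{\Q(\bmu_t)}$ when $t$ is not squarefree or when characters ramify at the same prime with lower power) to \emph{primitive} ones with the correct conductor, and the verification that the discrepancy is absorbed into an Euler product convergent past $\Re(s)=1/2$. Concretely one must show that for each prime $p$ the difference between the local factor of $L_d$ and that of $Z_d$ starts at $p^{-2s}$: this is where one uses that lowering the conductor at a prime $p$ costs at least a factor $p$ in conductor while the number of newly-counted characters grows only polynomially in a bounded way, so the ``error'' Euler factor is $1+O(p^{-2s})$ rather than merely $1+O(p^{-s})$. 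Once this local estimate is in hand, Landau's theorem (or simply absolute convergence of the error Euler product) yields the claimed analytic continuation to $\Re(s)>1/2$, and non-vanishing at $s=1$ follows since each local factor is a nonzero value close to $1$ and the product converges. I would also remark that this is exactly the content of Kubota's work \cite{K}, so an alternative is simply to cite it and restrict the new argument to checking that the normalization of $Z_d(s)$ used here (product over $t\mid d$, $t>1$, of full cyclotomic Dedekind zetas) agrees with Kubota's up to the allowed holomorphic nonvanishing factor.
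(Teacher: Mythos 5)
Your overall strategy---compare $L_d$ and $Z_d$ Euler factor by Euler factor and show that the local factors of the ratio are $1+O(p^{-2s})$---is the right one, and is in fact what the paper does. But the central identity on which you hang the middle of your argument is false: the Dedekind zeta function of $\Q(\bmu_t)$ factors as $\zeta_{\Q(\bmu_t)}(s)=\prod_\psi L(\psi,s)$ with $\psi$ running over the primitive Dirichlet characters of \emph{conductor} dividing $t$ (the characters of $\Gal(\Q(\bmu_t)/\Q)\cong(\Z/t\Z)^\times$), not over characters of \emph{order} dividing $t$. The latter form an infinite set (already the quadratic characters do), so your double product $\prod_{t\mid d}\prod_{\ord(\psi)\mid t}L(\psi,s)$ is not $Z_d(s)$, and the subsequent reorganization by exact order with multiplicity $\#\{t: e\mid t\mid d\}$ has no content. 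Relatedly, your diagnosis of the ``main obstacle'' as sieving imprimitive characters out of $\zeta_{\Q(\bmu_t)}$ is misplaced: the zeta function of an abelian field already factors into primitive $L$-functions, and on the $L_d$ side primitivity causes no difficulty because $\FF_d$ is multiplicative (by the Chinese remainder theorem a primitive character of conductor $m=\prod_i q_i$ and order dividing $d$ is uniquely a product of primitive characters of the prime-power conductors $q_i$ and order dividing $d$)---which is also the fact you need, but never verify, in order to speak of ``the Euler product of $L_d(s)$'' at all.

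The step you leave unproved is exactly the crux: why the $p^{-s}$ terms cancel in the local factor of $L_d(s)/Z_d(s)$. This is a concrete computation, not a conductor-lowering estimate. For $p\nmid d$ one has $\FF_d(p)=\gcd(d,p-1)-1$ and $\FF_d(p^k)=0$ for $k\ge 2$, so the local factor of $L_d$ is exactly $1+(\gcd(d,p-1)-1)p^{-s}$; on the other side the Euler factor of $Z_d$ at such $p$ is $\prod_{t\mid d,\,t>1}(1-p^{-\nu(p,t)s})^{-\varphi(t)/\nu(p,t)}$, with $\nu(p,t)$ the order of $p$ modulo $t$, whose coefficient of $p^{-s}$ is $\sum_{t\mid d,\,t>1,\,t\mid p-1}\varphi(t)=\gcd(d,p-1)-1$. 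It is this coincidence of linear coefficients that makes the local factor of the ratio equal to $1+O_d(p^{-2s})$, hence gives convergence and nonvanishing on $\Re(s)>1/2$ (the finitely many $p\mid d$ contribute polynomial local factors that are nonzero at $s=1$ because the coefficients $\FF_d(p^k)$ are nonnegative). Your stated reason---that ``lowering the conductor costs a factor $p$ while the number of newly-counted characters grows only polynomially''---does not produce this cancellation and would at best give local factors $1+O(p^{-s})$, i.e.\ nothing beyond $\Re(s)>1$. Finally, simply citing Kubota is not an adequate fallback: as the paper notes, Kubota does not state a result this strong, so the local computation above (which is the paper's proof) has to be carried out.
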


\begin{proof}
The key ideas of the proof are in \cite[Theorem 6]{K}, although Kubota 
does not state a result this strong.  See also \cite[Proposition 5.2]{DFK} 
for a proof in the case $d = 3$ that works for all primes $d$.  
We prove the theorem in general by showing that $L_d(s)/Z_d(s)$ is given by 
an Euler product that converges on the half plane $\Re(s) > 1/2$.

Suppose $m \in \Z_{>0}$ factors into powers of distinct primes as $m = \prod_i q_i$.  
Using the isomorphism $(\Z/m\Z)^\times \cong \prod_i (\Z/q_i\Z)^\times$, we see that
every Dirichlet character of conductor $m$ and order dividing $d$ can be written 
uniquely as a product of Dirichlet characters of conductor $q_i$ and order dividing $d$.  
It follows that 
the function $\FF_d(m)$ is multiplicative in $m$, and thus $L_d(s)$ can be written 
as an Euler product
$$
L_d(s) = \prod_p L_{d,p}(s) \quad \text{where} \quad 
   L_{d,p}(s) := \sum_{k=0}^\infty \FF_d(p^k)p^{-ks}.
$$
It is straightforward to verify that: 
\begin{itemize} 
\item
$L_{d,p}(s)$ is a (finite degree) polynomial in $p^{-s}$,
\item
$\FF_d(p) = \gcd(d,p-1)-1$,
\item 
if $p \nmid d$ and $k \ge 2$ then $\FF_d(p^k) = 0$.
\end{itemize}
In particular 
\begin{equation}
\label{x1}
L_{d,p}(s) = 1 + (\gcd(d,p-1)-1)p^{-s} \quad\text{if $p \nmid d$}.
\end{equation}

Let $\cL_{d,p}(s)$ denote the Euler factor at $p$ of the Dirichlet series $Z_d(s)^{-1}$.  
If $t \in \Z_{>0}$ and $p \nmid t$ let $\nu(p,t)$ denote the order of $p$ in $(\Z/t\Z)^\times$.  
Then if $p \nmid d$ we have
\begin{multline*}
(1-p^{-s})\cL_{d,p}(s) = \prod_{t\mid d}(1-p^{-\nu(p,t)s})^{\varphi(t)/\nu(p,t)} \\[-5pt] 
   = \prod_{n \mid d} (1-p^{-ns})^{\sum_{t \mid d, \nu(p,t)=n}\varphi(t)/n}. 
\end{multline*}
The exponent of the $n=1$ term is $\sum_{t \mid d, t \mid (p-1)}\varphi(t) = \gcd(d,p-1)$, so
\begin{equation}
\label{x2}
\cL_{d,p}(s) = (1-p^{-s})^{\gcd(d,p-1)-1}
   \prod_{n \mid d,n>1} (1-p^{-ns})^{\sum_{t \mid d, \nu(p,t)=n}\varphi(t)/n}.
\end{equation}
It follows from \eqref{x1} and \eqref{x2} that the product 
$\prod_{p}L_{d,p}(s) \cL_{d,p}(s)$ converges on the half plane $\Re(s) > 1/2$ 
and is not zero at $s=1$.  This completes the proof of the theorem.
\end{proof}

\begin{cor}
\label{22b}
The Dirichlet series $L_d(s)$ of Theorem \ref{22} converges on the half plane 
$\Re(s) > 1$, and has a pole of order $\tau(d)-1$ at $s=1$, where $\tau(d)$ 
is the number of divisors of $d$.
\end{cor}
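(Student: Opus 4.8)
The plan is to deduce Corollary \ref{22b} directly from Theorem \ref{22} by analyzing the behavior of $Z_d(s)$ at $s=1$ and transporting this information through the factorization $L_d(s) = \bigl(L_d(s)/Z_d(s)\bigr)\cdot Z_d(s)$. First I would record that each $\FF_d(m)$ is a nonnegative integer bounded by the number of characters of order dividing $d$ and conductor $m$, which is itself bounded by $m^{\varepsilon}$ for any $\varepsilon > 0$ (or more crudely by $\varphi(m) \le m$); this gives convergence of $L_d(s)$ on $\Re(s) > 1$ immediately by comparison with $\zeta(s-\varepsilon)$ or $\zeta(s)\cdot(\text{bounded})$, establishing the first assertion. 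Alternatively, one can note that $L_d(s)/Z_d(s)$ is holomorphic and nonzero on $\Re(s) > 1/2$ by Theorem \ref{22}, while $Z_d(s) = \prod_{t\mid d, t>1}\zeta_{\Q(\bmu_t)}(s)$ is a product of Dedekind zeta functions, each of which converges absolutely on $\Re(s)>1$; hence $L_d(s)$ converges there as well.

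Next I would pin down the order of the pole at $s=1$. Each Dedekind zeta function $\zeta_{\Q(\bmu_t)}(s)$ has a simple pole at $s=1$ (by the class number formula / standard analytic properties of Dedekind zeta functions) and is otherwise holomorphic and nonvanishing in a punctured neighborhood of $s=1$ on $\Re(s) \ge 1$. Therefore $Z_d(s) = \prod_{t\mid d,\, t>1}\zeta_{\Q(\bmu_t)}(s)$ has a pole at $s=1$ of order exactly equal to the number of divisors $t$ of $d$ with $t > 1$, namely $\tau(d) - 1$. Since by Theorem \ref{22} the ratio $L_d(s)/Z_d(s)$ is analytic and nonzero at $s=1$, multiplying back gives that $L_d(s)$ has a pole at $s=1$ of order exactly $\tau(d)-1$, which is the second assertion.

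The only mild subtlety — and the step I would flag as the point requiring a word of care rather than a genuine obstacle — is justifying that the pole order is \emph{exactly} $\tau(d)-1$ and not smaller: this requires knowing that the simple poles of the various $\zeta_{\Q(\bmu_t)}(s)$ do not get cancelled, which is guaranteed precisely by the nonvanishing of $L_d(s)/Z_d(s)$ at $s=1$ supplied by Theorem \ref{22}, together with the fact that a product of functions each with a simple pole has a pole of order equal to the number of factors (no cancellation is possible among pole parts). When $d = 1$ the product $Z_d$ is empty, $\tau(1) - 1 = 0$, and the statement reduces to $L_1(s) = 1$ having no pole, which is consistent. I would present the argument in two short paragraphs: one establishing convergence on $\Re(s) > 1$, and one computing the pole order via the factorization, citing Theorem \ref{22} for the crucial analytic-continuation-and-nonvanishing input.
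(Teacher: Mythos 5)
Your argument is correct and is essentially the paper's own proof, which simply observes that the corollary is immediate from Theorem \ref{22} because each $\zeta_{\Q(\bmu_t)}$ has a simple pole at $s=1$; you have just spelled out the same factorization $L_d = (L_d/Z_d)\cdot Z_d$ and the no-cancellation point in more detail. The only slip is the parenthetical crude bound $\FF_d(m)\le\varphi(m)\le m$, which by itself would only give convergence for $\Re(s)>2$, but this is immaterial since both your $m^{\varepsilon}$ bound and your factorization argument do give convergence on $\Re(s)>1$.
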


\begin{proof}
This is immediate from Theorem \ref{22}, since each $\zeta_{\Q(\bmu_t)}$ has a simple pole at $s=1$.
\end{proof}

Since $F_d(m) \le \FF_d(m)$, the following is an immediate consequence of Corollary \ref{22b}.

\begin{cor}
\label{s1c}
If $g(X) \ll X^{-s}$ for some $s > 1$, then $S_d(g)$ is finite.
\end{cor}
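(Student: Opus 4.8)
The plan is to reduce the claim directly to the convergence statement for $L_d(s)$ established in Corollary \ref{22b}. Recall that $S_d(g) = \sum_{m \ge r} F_d(m) g(m)$ and that, by construction, $F_d(m) \le \FF_d(m)$ for every $m$: an even primitive Dirichlet character of order exactly $d$ is in particular a primitive Dirichlet character of order dividing $d$, so $F_d(m)$ counts a subset of the characters counted by $\FF_d(m)$.

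First I would unwind the hypothesis. To say $g(X) \ll X^{-s}$ for some $s > 1$ means there is a constant $C$ (depending on $d$, $g$, $r$) with $0 \le g(m) \le C m^{-s}$ for all $m \ge r$. Combining this with $F_d(m) \le \FF_d(m)$ yields the termwise bound
$$
F_d(m) g(m) \le C\,\FF_d(m)\,m^{-s}.
$$
Summing over $m \ge r$, the right-hand side is bounded above by $C \sum_{m=1}^\infty \FF_d(m)\,m^{-s} = C\, L_d(s)$.

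It remains only to note that $L_d(s)$ is finite at our particular value of $s$. But Corollary \ref{22b} asserts precisely that the Dirichlet series $L_d(s)$ converges on the half-plane $\Re(s) > 1$; since $s > 1$, this gives $L_d(s) < \infty$, and therefore $S_d(g) \le C\,L_d(s) < \infty$, as claimed.

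There is essentially no obstacle here: all of the arithmetic content — the multiplicativity of $\FF_d$, the Euler product expansion, and the comparison with $Z_d(s) = \prod_{t \mid d,\, t > 1}\zeta_{\Q(\bmu_t)}(s)$ that pins down the abscissa of convergence at $s = 1$ — has already been carried out in Theorem \ref{22} and Corollary \ref{22b}. The present statement is merely the elementary observation that a series of nonnegative terms dominated termwise by a convergent series is itself convergent, so the proof is a two-line application of the comparison test.
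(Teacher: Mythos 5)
Your proof is correct and is essentially identical to the paper's: the corollary is stated there as an immediate consequence of Corollary \ref{22b} via the bound $F_d(m) \le \FF_d(m)$, which is exactly the comparison-test argument you spell out. Nothing further is needed.
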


Recall that if $p(X), q(X) : \R_{\ge r} \to \R_{>0}$ are functions for some real number $r$, we write 
$p(X) \sim q(X)$ to mean that $\lim_{X \to \infty} p(X)/q(X)$ exists and is nonzero.

\begin{lem}
\label{gr}
Suppose $r \in \R$ and $g : \R_{\ge r} \to \R_{> 0}$ is decreasing, 
continuously differentiable, and satisfies $-g'(X) \ll g(X)/X$.  
Then for every prime $p$ we have
$$
S_p(g;X) \ll_{p,g} \sum_{m=r}^X g(m) \sim \int_{r}^X g(t) dt.
$$
\end{lem}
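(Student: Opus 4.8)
The plan is to split the statement into its two halves: first the comparison $S_p(g;X) \ll_{p,g} \sum_{m=r}^X g(m)$, which involves the arithmetic function $F_p(m)$, and second the comparison $\sum_{m=r}^X g(m) \sim \int_r^X g(t)\,dt$, which is pure real analysis and has nothing to do with $p$. For the second half, I would use the standard fact that for a positive decreasing function $g$, the sum $\sum_{m=r}^X g(m)$ and the integral $\int_r^X g(t)\,dt$ differ by an amount controlled by $g(r) - g(X) \le g(r)$ (by comparing $g(m)$ with $\int_{m-1}^m g$ and $\int_m^{m+1} g$). The only thing to check is that the integral tends to infinity, i.e.\ that $g$ is not summable; this is exactly where the hypothesis $-g'(X) \ll g(X)/X$ enters. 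From $-g'(X) \le C g(X)/X$ one gets, by integrating $-g'/g \le C/t$, that $\log(g(r)/g(X)) \le C\log(X/r)$, so $g(X) \gg X^{-C}$. Hence $\int_r^X g$ grows at least like a positive power of $X$ (or like $\log X$ if $C$ can be taken arbitrarily small), and in any case diverges, so the bounded discrepancy $g(r)$ is negligible and the $\sim$ relation holds.

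For the first half I would compare $S_p(g;X) = \sum_m F_p(m) g(m)$ with $\sum_m g(m)$ using partial summation against the counting function $N_p(X) := \sum_{m \le X} F_p(m) = \#\X_p(X)$. By Corollary \ref{22b} (and since $F_p(m) \le \FF_p(m)$ with $Z_p(s) = \zeta_{\Q(\bmu_p)}(s)$ having a simple pole, i.e.\ $\tau(p) - 1 = 1$), the Dirichlet series $\sum F_p(m) m^{-s}$ extends past $\Re(s) = 1$ with at most a simple pole at $s=1$; standard Tauberian/Perron arguments then give $N_p(X) \ll_p X$. Writing $g(m) = -\int_m^\infty g'(t)\,dt + \lim_{t\to\infty} g(t)$ and applying Abel summation,
$$
S_p(g;X) = N_p(X) g(X) + \int_r^X N_p(t)(-g'(t))\,dt \ll_p X g(X) + \int_r^X t\,(-g'(t))\,dt.
$$
Using the hypothesis $-g'(t) \ll g(t)/t$, the integral is $\ll \int_r^X g(t)\,dt$, and the boundary term $X g(X)$ is likewise $\ll \int_{X/2}^X g(t)\,dt \ll \int_r^X g(t)\,dt$ because $g$ is decreasing (so $g(X) \le \frac{2}{X}\int_{X/2}^X g$). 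Finally $\int_r^X g(t)\,dt \sim \sum_{m=r}^X g(m)$ by the second half, completing the chain.

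The main obstacle is the input $N_p(X) \ll_p X$: one needs to extract an honest upper bound on the partial sums of $F_p(m)$ from the analytic information about $L_p(s)$ in Theorem \ref{22} and Corollary \ref{22b}. This is routine in spirit (a simple pole at $s=1$ forces linear growth of the summatory function, e.g.\ via a Tauberian theorem of Wiener--Ikehara type or a direct Perron-formula estimate using the known region of holomorphy $\Re(s) > 1/2$), but it is the one step that genuinely uses the number-theoretic structure rather than just monotonicity of $g$. Everything else — Abel summation, the elementary sum-versus-integral comparison, and the lower bound $g(X) \gg X^{-C}$ coming from the logarithmic-derivative hypothesis — is bookkeeping. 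I would present the real-analysis half first (since the $\sim$ statement is reused), then the arithmetic half, citing Corollary \ref{22b} for the bound on $N_p$.
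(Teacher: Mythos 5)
Your overall route is essentially the paper's: both arguments feed the analytic input of Theorem \ref{22}/Corollary \ref{22b} into a Tauberian statement for the summatory function of $\FF_p$ and then run partial (Abel) summation against $g$, using $-g' \ll g/t$ and monotonicity to absorb the boundary and integral terms. Your version is marginally leaner, since you only need the upper bound $\sum_{m\le X}\FF_p(m) \ll_p X$, whereas the paper extracts the full asymptotic $cX+o(X)$ and subtracts $c\int_r^X g$ to get an asymptotic identity (of which the stated $\ll$ is a corollary); that extra precision is not needed for the lemma as stated, so your simplification is legitimate.

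There is, however, one incorrect step you should repair: the claim that the hypothesis $-g'(X)\ll g(X)/X$ forces $\int_r^X g(t)\,dt$ to diverge. It does not: $g(X)=X^{-2}$ is positive, decreasing, $C^1$, and satisfies $-g'(X)=2g(X)/X$, yet $\int_r^\infty g$ converges. The bound $g(X)\gg X^{-C}$ you derive gives divergence only when $C<1$, and nothing in the hypotheses caps $C$ below $1$. This convergent case is not a pathology to be excluded --- it is precisely the case invoked in Proposition \ref{expprop}(i) (where $\sigma>1$) to conclude that $\E_p(\infty)$ is finite, and the paper's proof treats it separately (error term $O(1)$ rather than $o(\int_r^X g)$). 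Fortunately your argument survives with a one-line patch: when $\int_r^\infty g<\infty$, monotonicity gives $\sum_{m>r} g(m)\le \int_r^\infty g<\infty$, so both the sum and the integral tend to finite positive limits and the relation $\sum_{m=r}^X g(m)\sim\int_r^X g(t)\,dt$ still holds in the paper's sense (the ratio has a nonzero limit); and your first half never used divergence, since $S_p(g;X)\ll_p \int_r^X g(t)\,dt \le \sum_{m=r}^X g(m)$ directly. So the proof is correct after deleting the divergence claim and adding the convergent-case remark.
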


\begin{proof}
Fix a prime $p$, and let $A(X) := \sum_{m=r}^X \FF_p(m)$.  
Since $p$ is prime, Corollary \ref{22b} shows that the Dirichlet series $L_p(s)$ of 
Theorem \ref{22} has a simple pole at $s=1$.  Hence a standard Tauberian theorem 
(for example \cite[Theorem I, Appendix 2]{nark}) that 
\begin{equation}
\label{S3}
A(X) = cX + o(X)
\end{equation}
where $c$ is the residue at $s=1$ of $L_p(s)$.

Let 
$$
\SSS_p(g;X) := \sum_{m=r}^X \FF_p(m) g(m).
$$
Abel summation (see for example \cite[Theorem 4.2]{apostol}) gives
\begin{equation}
\label{AS}
\SSS_p(g;X) = A(X)g(X) - A(r)g(r) - \int_r^X A(t)g'(t)dt.
\end{equation}
Integration by parts gives
\begin{equation}
\label{IP}
 \int_r^X g(t) dt = X g(X) - r g(r) - \int_r^X t g'(t) dt.
\end{equation}
Let $B(X) := A(X) - cX$ with $c$ as in \eqref{S3}.
Subtracting $c$ times \eqref{IP} from \eqref{AS} gives
\begin{multline}
\label{IPAS}
\SSS_p(g;X) - c\int_r^X g(t) dt \\ = B(X) g(X) - B(r) g(r) - \int_r^X B(t) g'(t)dt.
\end{multline}

Since $g$ is decreasing and $-g'(X) \ll g(X)/X$, we have
\begin{equation}
\label{o1}
\biggl|\int_r^X B(t) g'(t)dt \biggr| \le -\int_r^X |B(t)| g'(t)dt 
   \ll \int_r^X \frac{|B(t)|}{t} g(t)dt.
\end{equation}
In addition, since $B(X) = o(X)$ and $g$ is decreasing,
\begin{equation}
\label{doub}
B(X) g(X) = o(X g(X)) \quad\text{and}\quad Xg(X) \le \int_r^X g(t)dt + r g(r).
\end{equation}

Combining \eqref{IPAS} with \eqref{o1} and \eqref{doub} 
shows that 
$$
\SSS_p(g;X) - c\int_r^X g(t) dt = 
   \begin{cases} 
   o\bigl(\int_r^X g(t) dt\bigr) & \text{if $\int_r^X g(t)dt$ is unbounded,} \\[3pt]
   O(1) & \text{if $\int_r^X g(t)dt$ is bounded.}
   \end{cases}
$$
Since $S_p(g,X) \le \SSS_p(g,X)$, this proves the lemma.
\end{proof}

\begin{lem}
\label{hw2}
There is an absolute constant $C$ such that for every $m \ge 3$ we have 
$\varphi(m) \log\log(m) > m/C$.
\end{lem}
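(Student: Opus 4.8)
The plan is to combine the classical lower bound for Euler's function with a finite check. The main input is the classical theorem (see \cite[Theorem 328]{h-w}) that
\[
\liminf_{m\to\infty}\frac{\varphi(m)\log\log(m)}{m} = e^{-\gamma},
\]
where $\gamma$ is Euler's constant; in particular this limit inferior is a strictly positive real number. Hence there is an integer $M_0$ such that $\varphi(m)\log\log(m) > \tfrac{1}{2} e^{-\gamma}\,m$ for all $m\ge M_0$.

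I would then dispose of the remaining range $3 \le m < M_0$ directly. For each such $m$ we have $\varphi(m) \ge 1$ and, since $m \ge 3 > e$, also $\log\log(m) > 0$; thus the finitely many quantities $\varphi(m)\log\log(m)/m$ form a set of strictly positive reals, bounded below by some $\epsilon_0 > 0$. Taking $C := \max\{2e^{\gamma},\, 2/\epsilon_0\}$ then yields $\varphi(m)\log\log(m) > m/C$ for every $m \ge 3$, which is the assertion.

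If one wishes to avoid quoting the value of the $\liminf$, the standard self-contained route would replace the first paragraph as follows. Write $\varphi(m)/m = \prod_{p\mid m}(1-1/p)$; since $1-1/p$ increases with $p$, if $\omega(m)=k$ then this product is at least $\prod_{i=1}^{k}(1-1/p_i)$, where $p_1<\dots<p_k$ are the first $k$ primes, and moreover $m \ge p_1\cdots p_k$. Mertens' third theorem gives $\prod_{i=1}^{k}(1-1/p_i)\gg 1/\log p_k$, while Chebyshev's estimate $\theta(x)\gg x$ gives $\log m \ge \theta(p_k)\gg p_k$, hence $\log p_k \le \log\log(m) + O(1) \ll \log\log(m)$ once $k$ is large; combining, $\varphi(m)\log\log(m)\gg m$ for all $m$ with $\omega(m)$ sufficiently large, while for $\omega(m)$ bounded the ratio $\varphi(m)/m$ is itself bounded below and the inequality is immediate. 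In either approach there is no genuine obstacle: the only point needing attention is the passage between $\log\log(m)$ and $\log p_k$, where one must use that $m$ is at least the $k$-th primorial together with a Chebyshev-type lower bound on $\theta$ (the weaker bound $m \ge p_k$ would not suffice), together with the elementary remark that $\log\log(m)$ is positive precisely because $m \ge 3 > e$.
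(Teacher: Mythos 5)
Your first argument is exactly the paper's proof: the paper simply cites \cite[Theorem 328]{h-w} (the statement that $\liminf_{m\to\infty}\varphi(m)\log\log(m)/m=e^{-\gamma}>0$), and you correctly fill in the routine step of absorbing the finitely many $m$ with $3\le m<M_0$ into the constant $C$, using that $\log\log(m)>0$ for $m\ge 3$. The Mertens/Chebyshev route you sketch as an alternative is also sound but is not needed beyond the citation.
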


\begin{proof}
This follows from \cite[Theorem 328]{h-w}.
\end{proof}

\begin{prop}
\label{lemprop2}
Suppose $M \in \R_{>0}$, 
$t : \Z_{\ge 3} \to \R$ is a function satisfying $t(d) \gg \log(d)$, 
and $\sigma_d, \tau_d$ are sequences of real numbers 
such that $\{t(d)\sigma_d : d \ge 3\}$ is bounded and $\lim_{d\to\infty}\tau_d = 0$.  Then
\begin{equation}
\label{Ed2}
\sum_{d \,:\, t(d)(1-\sigma_d) > 1} \sum_{m=1}^\infty F_d(m) 
   \biggl(\frac{M d m^{\sigma_d} \log(m)^{\tau_d}}{\varphi(m)\log(m)}\biggr)^{t(d)}
\end{equation}
converges.
\end{prop}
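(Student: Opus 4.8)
The plan is to reduce Proposition~\ref{lemprop2} to a bound on each inner sum that is uniform enough to be summed over $d$. First I would use Lemma~\ref{hw2} to replace the factor $m/\varphi(m)$ by $C\log\log(m)$, so that up to a constant the summand becomes $\bigl(M C d\, m^{\sigma_d-1}\log(m)^{\tau_d-1}\log\log(m)\bigr)^{t(d)}$. Since $\log\log(m) \ll_\epsilon \log(m)^\epsilon$ for any $\epsilon>0$, and since $\tau_d\to 0$, for all but finitely many $d$ we can absorb the $\log\log$ into a bound of the shape $g_d(m) := (M'd)^{t(d)} m^{-\sigma t(d)}\log(m)^{\tau'_d t(d)}$ with $\sigma := \sigma_d-1+\text{(small)}$; more precisely, writing $s_d := t(d)(1-\sigma_d)$, the hypothesis that we sum only over $d$ with $s_d>1$, together with boundedness of $t(d)\sigma_d$, means $t(d)\to\infty$ on the relevant set, so $s_d\ge 1+c\,t(d)$ for a fixed $c>0$ once $d$ is large. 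Thus $g_d(m)\ll_d m^{-1-ct(d)}(\log m)^{o(t(d))}$, and by Corollary~\ref{s1c} the inner sum $S_d(g_d)$ is finite for each such $d$.

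The real work is making the inner bound quantitative and then summing over $d$. I would apply Lemma~\ref{gr}: since $g_d$ is eventually decreasing with $-g_d'(m)\ll g_d(m)/m$, we get $S_d(g_d;\infty)\ll_p \int_r^\infty g_d(t)\,dt$ for each fixed $d$ — but I need the implied constant to be uniform in $d$. The cleanest way is to avoid Lemma~\ref{gr}'s $p$-dependence by bounding $F_d(m)\le\FF_d(m)$ and using Corollary~\ref{22b}: the Dirichlet series $L_d(s)=\sum\FF_d(m)m^{-s}$ converges for $\Re(s)>1$ with a pole of order $\tau(d)-1$ at $s=1$. So $\sum_m F_d(m)m^{-(1+\eta)}\ll L_d(1+\eta)\ll \eta^{-(\tau(d)-1)}$ for $\eta>0$. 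Taking $\eta := c\,t(d)/2$ (which is $\ge 1$ for large $d$) and using that the log-power in $g_d$ only contributes a further $\ll (t(d))^{O(t(d))}$ via partial summation / the integral comparison, the whole inner sum is bounded by something like $(M'd)^{t(d)}\cdot (c t(d))^{O(\tau(d))}\cdot (ct(d)/2)^{-ct(d)/2}$ — I would need to check $\tau(d)=o(t(d))$, which holds since $t(d)\gg\log d$ and $\tau(d)=d^{o(1)}$... wait, $\tau(d)$ can be as large as roughly $d^{1/\log\log d}$, which is not $o(\log d)$, so this needs care.

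The main obstacle, then, is precisely controlling the pole order $\tau(d)-1$ against the exponent $t(d)\gg\log d$: the polynomial-in-$\eta$ blowup $\eta^{-(\tau(d)-1)}$ of $L_d(1+\eta)$ near $s=1$ must be beaten by the geometric decay coming from raising a base like $M d / (\eta\,\text{stuff})$ to the power $t(d)$. I would handle this by choosing the test point $\eta$ more aggressively — say $\eta = t(d)^{1/2}$ or even a fixed positive constant — and estimating $L_d(1+\eta)$ crudely by its Euler product $\prod_p L_{d,p}(1+\eta)$, noting from the proof of Theorem~\ref{22} that $L_{d,p}(s)\le \prod_{t\mid d}\zeta_{\Q(\bmu_t)}(s)$-type factors, hence $L_d(1+\eta)\le \prod_{t\mid d}\zeta_{\Q(\bmu_t)}(1+\eta) \le \zeta(1+\eta)^{d}$ (bounding each cyclotomic zeta crudely, using $[\Q(\bmu_t):\Q]\le d$ and that there are $\le\tau(d)\le d$ factors). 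Then $L_d(1+\eta)\le \zeta(1+\eta)^{d^2}$, and with $\eta$ a small fixed constant this is $\le K^{d^2}$ for an absolute $K$. Meanwhile the $t(d)$-th power of the base, with the $m^{\sigma_d}$ moved onto the $m^{-(1+\eta)}$ via $\sigma_d\le 1$, gives $(M'd)^{t(d)}$ times a convergent $m$-sum; and $\sum_d K^{d^2}(M'd)^{t(d)}\cdot(\text{decay})$ converges provided the genuine decay in the exponent $t(d)(1-\sigma_d)-1\ge c\,t(d)$ outpaces $d^2\log K$ — which it does once I also exploit that on the summation set $\sigma_d<1$ forces, via boundedness of $t(d)\sigma_d$, a lower bound $t(d)\gtrsim 1/(1-\sigma_d)$, giving enough room. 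I would finish by splitting the $d$-sum into the finitely many small $d$ (each term finite by Corollary~\ref{s1c}) plus the large-$d$ tail, where the above geometric-versus-subexponential comparison gives a convergent majorant, perhaps after replacing the crude $K^{d^2}$ by the sharper $\eta$-dependent bound and optimizing $\eta=\eta(d)\to 0$ slowly.
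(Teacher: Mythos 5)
There is a genuine gap, and it is the key point of the whole proposition: you never use (or even mention) the fact that $F_d(m)=0$ unless $m>d$ --- a primitive character of order $d$ needs an element of order $d$ in $(\Z/m\Z)^\times$, so its conductor exceeds $d$. This lower bound on the conductor is the only available mechanism for killing the factor $(Md)^{t(d)}$, which grows like $e^{t(d)\log d}$ as $d\to\infty$. In the paper's argument one bounds $F_d(m)<m$ crudely, restricts the inner sum to $m>d$, and gets $\sum_{m>d}m^{1-t(d)(1-\sigma_d)}\ll d^{\,2-t(d)(1-\sigma_d)}$; then $d^{t(d)}\cdot d^{-t(d)(1-\sigma_d)}=d^{t(d)\sigma_d}\le d^{b}$ is merely polynomial (here the boundedness of $t(d)\sigma_d$ enters), and the leftover factor $\bigl(C\log\log(d)/\log(d)^{1/2}\bigr)^{t(d)}$ --- coming from $m/\varphi(m)\ll\log\log m$, the reserve $\log(m)^{1/2}$ left after $\tau_d<1/2$, and monotonicity on $[d,\infty)$ --- is $O(d^{-n})$ for every $n$ precisely because $t(d)\gg\log d$. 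Your route provides no substitute for this cancellation: bounding $F_d\le\FF_d$ and estimating $L_d(1+\eta)$ sums over \emph{all} conductors, and since $\FF_d(1)=1$ one has $L_d(s)\ge 1$, so your majorant for the inner sum is at least of size $(M'd)^{t(d)}$ times a quantity that decays only like a fixed base to the power $t(d)$ (e.g.\ $2^{-c\,t(d)}$ from the exponent shift, or $e^{-c\,t(d)}$ however you tune $\eta$); that never beats $e^{t(d)\log(M'd)}$, so the proposed $d$-sum $\sum_d K^{d^2}(M'd)^{t(d)}\cdot(\text{decay})$ diverges under the stated hypotheses (note $t(d)\gg\log d$ gives only super-polynomial, not exponential, room in $d$). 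The closing claim that ``$t(d)\gtrsim 1/(1-\sigma_d)$ gives enough room'' is just a restatement of the summation condition and does not supply the missing decay.

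A secondary point: the Kubota/Dirichlet-series machinery, whose uniformity in $d$ you correctly worry about (pole of order $\tau(d)-1$, Euler-product bounds like $\zeta(1+\eta)^{d}$ or $\zeta(1+\eta)^{d^2}$), is not actually needed uniformly. In the paper it is invoked only for the finitely many small $d$ (via Corollary \ref{s1c}, i.e.\ just convergence of $L_d(s)$ for $\Re(s)>1$), where constants depending on $d$ are harmless; for all large $d$ the crude bound $F_d(m)<m$ together with $m>d$ suffices. So the fix is not to sharpen your $L_d(1+\eta)$ estimates but to reorganize the argument around the conductor restriction $m>d$; once that is in place, your per-$d$ finiteness argument and the $\log\log$ absorption are fine and the rest collapses to the elementary comparison above.
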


\begin{proof}
Let
$$
T_d := \sum_{m=1}^\infty F_d(m) 
   \biggl(\frac{M d m^{\sigma_d} \log(m)^{\tau_d}}{\varphi(m)\log(m)}\biggr)^{t(d)}.
$$
We will show below that $T_d$ converges if $t(d)(1-\sigma_d) > 1$.  
We want to show that in fact $\sum_{d \, : \, t(d)(1-\sigma_d)>1} T_d$ converges.
Note that $F_d(m) = 0$ unless $m > d$.  Fix $k$ such that if $d \ge k$, then 
\begin{equation}
\label{items}
\parbox{4.2in}{
\begin{itemize}
\item
$t(d)(1-\sigma_d) > 3$, 
\item
$\tau_d < 1/2$,
\item
$\log\log(x)/\log(x)^{1/2}$ is decreasing on $[d,\infty)$.
\end{itemize}}
\end{equation}
Let $C$ be the positive constant from Lemma \ref{hw2}.
Using Lemma \ref{hw2}, we have for every $d$ 
\begin{multline}
\label{sk1}
T_d = (Md)^{t(d)} \sum_{m > d} 
      F_d(m) \biggl(\frac{m^{\sigma_d}}{\varphi(m) \log(m)^{(1-\tau_d)}}\biggr)^{t(d)} \\
   < (Md)^{t(d)} \sum_{m > d}
      F_d(m)\biggl(\frac{C\log\log(m)}{m^{1-\sigma_d}\log(m)^{1-\tau_d}}\biggr)^{t(d)}.
\end{multline}
We have $F_d(m) \le |\Hom((\Z/m\Z)^\times,\bmu_d)| < m$, and using \eqref{items} we get 
\begin{align}
\notag
   \sum_{d \ge k} T_d 
   &< \sum_{d \ge k} (Md)^{t(d)} \sum_{m > d}
      m\biggl(\frac{C\log\log(m)}{m^{1-\sigma_d}\log(m)^{1/2}}\biggr)^{t(d)} \\
\notag
   &<\sum_{d \ge k} \biggl(\frac{MCd\log\log(d)}{\log(d)^{1/2}}\biggr)^{t(d)} 
      \sum_{m > d} m^{1-t(d)(1-\sigma_d)} \\
\notag
   &< \sum_{d \ge k}\biggl(\frac{MCd\log\log(d)}{\log(d)^{1/2}}\biggr)^{t(d)} 
      \frac{d^{2-t(d)(1-\sigma_d)}}{t(d)(1-\sigma_d)-2} \\
\label{finalsum}
   &\le \sum_{d \ge k} \biggl(\frac{MC\log\log(d)}{\log(d)^{1/2}}\biggr)^{t(d)}
      d^{2+b}
\end{align}
where $b = \sup\{t(d)\sigma_d : d \ge 3\}$.

Fix an integer $n \ge b+4$.  
Since $t(d) \gg \log(d)$, there is a positive constant $C'$ such that for all 
large $d$ we have
\begin{multline*}
\biggl(\frac{\log(d)^{1/2}}{MC\log\log(d)}\biggr)^{t(d)} \ge \log(d)^{C'\log(d)} \\
   = e^{C'\log(d)\log\log(d)} = d^{C'\log\log(d)} > d^n.
\end{multline*}
Thus the sum \eqref{finalsum}  converges.

For the finitely many $d < k$ with $t(d)(1-\sigma_d) > 1$, \eqref{sk1} and 
Corollary \ref{s1c} applied with 
$$
g(x) = \biggl(\frac{\log\log(x)}{x^{1-\sigma_d}\log(x)^{1-\tau_d}}\biggr)^{t(d)}
$$
shows that $T_d$ converges. This completes the proof of the proposition.
\end{proof}

\end{document}